\pgfplotsset{compat=1.7}%the version of pgfplots to use
\newcommand{\YT}[1]{\todo[color=red!50,author=\textbf{Yevgeny},inline]{\small #1\\}}
\newcommand{\XV}[1]{\todo[color=blue!100!white!30,author=\textbf{Xavier},inline]{\small #1\\}}
\renewcommand\thmcontinues[1]{Continued}%When splitting an example, it will only show "continued". when removed, it will show "continued from page ...".
\def\split{true}
\newcommand{\ignore}[1]{}
\renewenvironment{proof}[1][Proof]{\textbf{#1} }{\ \rule{0.5em}{0.5em}}
\newtheorem{theorem}{Theorem}
\newtheorem*{theorem*}{Theorem}
\newtheorem{definition}{Definition}
\newtheorem{obs}{Observation}
\newtheorem{proposition}{Proposition}
\newtheorem{example}{Example}
\newtheorem{corollary}{Corollary}
\renewcommand\thmcontinues[1]{Continued}%When splitting an example, it will only show "continued". when removed, it will show "continued from page ...".
\newcommand\ubar[1]{\underline{#1}} %underline
\def\1{{1\hskip-2.5pt{\rm l}}} %indicator 1
\def\s{\sigma} %sigma
\def\D{\Delta} %Big Delta
\def\G{\Gamma} %Big Gamma
\def\RR{\mathbb{R}} %Real numbers
\def\NN{\mathbb{N}} %Natural numbers
\def\E{\mathbb{E}} %Expetation
\def\Tau{\mathcal{T}} %Big tau
\def\f{f} %f for fixed. Xavier used red. I write it like that so we can change later.
\DeclareMathOperator*{\argmax}{arg\,max}
\DeclareMathOperator*{\argmin}{arg\,min}
\begin{document}

\title{Repeated Games with Switching Costs: \\Stationary vs History-Independent Strategies
\thanks{The authors wish to thank E. Solan, D. Lagziel, J. Flesch, and G. Ashkenazi-Golan  for their highly valuable comments. %The authors acknowledges the support of .
YT acknowledges the support of the French National Research Agency Grant ANR-17-EURE-0020, and by the Excellence Initiative of Aix-Marseille University -- A*MIDEX.
XV acknowledges the financial support by the National Agency for Research, Project CIGNE (ANR-15-CE38-0007-01).
% and a partial support by the COST Action CA16228 ``European Network for Game Theory'' (GAMENET).
}}
\author{Yevgeny Tsodikovich\thanks{Aix Marseille Univ, CNRS, AMSE, Marseille, France. e-mail: \textsf{yevgets@gmail.com}},
Xavier Venel\thanks{Dipartimento di Economia e Finanza, LUISS, Viale Romania 32, 00197 Rome, Italy,  
\textsf{xvenel@luiss.it}, https://orcid.org/0000-0003-1150-9139}, 
Anna Zseleva\thanks{
	Maastricht University, School of Business and Economics, Dept. of Quantitative Economics, P.O. Box 616, 6200 MD Maastricht, The Netherlands, e-mail: \textsf{anna.zseleva@maastrichtuniversity.nl}}}

\maketitle

\thispagestyle{empty}

\lineskip=2pt\baselineskip=5pt\lineskiplimit=0pt

\begin{abstract}
\noindent We study zero-sum repeated games where the one player has to pay a certain cost each time he changes his action.
Our contribution is twofold.
First, we show that the value of the game exists in stationary strategies, depending solely on the previous action of the minimizing player, not the entire history. 
%We provide a full characterization of the value and the optimal strategies as function of the ratio between the switching costs and the stage payoffs.
We provide important properties of the value and the optimal strategies as a function of the ratio between the switching costs and the stage payoffs.
The strategies exhibit a robustness property and typically do not change with a small perturbation of the switching costs.
Second, we consider a case where the minimizing player is limited to playing simpler strategies that are history-independent. 
Here too, we characterize the (minimax) value and the strategies for obtaining it.
Moreover, we present several bounds on the loss due to this limitation.
\end{abstract}

\bigskip
\noindent {\emph{Journal} classification numbers: C72, C73.}

\bigskip

\noindent Keywords: Switching Costs,  Repeated Games, Stochastic Games, Zero-sum games.

%\tableofcontents %just for the draft, so it's easier to find things

\newpage
\lineskip=1.8pt\baselineskip=18pt\lineskiplimit=0pt \count0=1

% --------------------------------------------------------------------------
\section{Introduction}
%The TIM model - general introduction
An environmental inspector is in charge of a large region which comprises of several potential illegal dump sites.
His job is to patrol between the different sites to prevent pollution and penalize violators.
Traveling, however, infers costs to him since while in motion all the sites are left unguarded.
In addition, some physical movement costs might apply, such as the cost of fuel.
Therefore, while planning his route, he needs to take into account both the effectiveness of each possible route in minimizing the pollution and the costs associated with traveling (``switching costs''). 

%application of TIM
This model, namely the \emph{traveling inspector model (TIM)}, was presented in \cite{filar1986traveling} (see also \cite{filar1985player} and \cite{filar1983interactive}) and was used to study different situations which include inspection of remote locations, such as environmental protection \citep{jorgensen2010dynamic}, arms control verification \citep{o1994game}, and scheduling problems of sensors \citep{yavuz2007analysis}.
Recently, a similar model was proposed in the field of cybersecurity \citep{rass2014numerical,wachter2018security}, where the locations are virtual rather than physical but there is still a problem of monitoring all the locations at once and the switches are costly.
For example, when changing the configuration of a computer system costs could include downtime of servers and hourly rates of staff.
%\cite{ZhuBasar2013} mention, amongst many other examples, the deceptive routing game where a defense strategy misleads a jammer into attacking legitimate routes by deploying fake routes in wireless communication networks.
In this paper we consider the TIM as well and study how changing the ratio between the switching costs and the pollution costs affect the value of the inspector.
In addition, we compare this value to the payoff that can be obtained when only using simpler history-independent strategies, as is done in some applications \citep{GDO2020paper}.

%General broad background (mini lit-review). So far only NZS, markets and so on. Typically help cooperation, eliminate EQ and going towards pareto optimallity. THE GAP were are closing: ZS.
The TIM is an example of switching costs (or ``menu costs'' \citep{akerlof1985can,akerlof1985near}) which appear naturally in different economic settings, such as markets \citep{beggs1992multi} and bargaining \citep{caruana2007multilateral}.
They can reflect the real cost of changing one's action (costs of setting up machinery to increase production, shut-down costs of reducing production and adapting machinery from one production line to another) or be a method of modeling bounded rationality when changing actions might be ``hard'' \citep{lipman2000switching,lipman2009switching}. 
In a management quality context, switching costs can be related to changing a company policy or marketing strategy.
In these examples, the underlying game is a non-zero-sum game and the addition of switching costs creates new opportunities for the players to cooperate to increase their payoffs.
This occurs when the switching costs either eliminate existing equilibria (without switching costs) with low payoffs or turn a Pareto-optimal strategy profile into an equilibrium, such as the cooperative strategy profile in the repeated Prisoner's Dilemma (even the finitely repeated one, see \cite{lipman2000switching,lipman2009switching}).
On the other hand, switching costs might impede punishments by making switching to a punishment strategy costly.

To the best of our knowledge, apart from studies focused on specific games (such as the Blotto game in \cite{shy2019colonel}), switching costs have never been thoroughly addressed theoretically in the setting of repeated normal-form zero-sum games.
\cite{schoenmakers2008repeated} studied an equivalent zero-sum model, in which a constant bonus is granted each time the previous action is not changed. 
In this paper, we generalize their work\footnote{Receiving a bonus for repeating an action is equivalent to paying a fine for not repeating the action. Thus, up to an additive constant, their model is equivalent to a private case of our model studied in Section \ref{s_constantsc}.} to non-constant switching costs, discuss how optimal strategies can be approximated using the simpler history-independent strategies, and study limit cases of small and large switching costs (compared to the payoffs of the game) and action-independent switching costs.
%In addition, we extend the comparison of stationary and simpler strategies to a more general class of stochastic games than that derived from adding switching costs to a normal-form zero-sum game.

%Our model in a nutshell.  Justify assumptions.
\noindent\textbf{Our model.}
We consider a repeated normal-form zero-sum game.
At each time step, the minimizing player pays the maximizing player both the ``standard'' outcome of the game and an additional fine if he switched his previous action (the maximizing player incurs no switching costs).
This happens in the TIM, since during the switches all sites are not monitored, which benefits the polluter in the same way it harms the inspector.
This assumption also corresponds to a worst-case analysis a player might perform when assuming the others act as an adversary.
For example, when the game is non-zero-sum, this results in the reservation payoff that each player can guarantee for himself, and is the basis to defining feasible and individually rational payoffs.
We use this to analyze non-zero-sum games with switching costs as part of another project (work in progress).

Unlike the models most closely resembling ours, \cite{schoenmakers2008repeated} and \cite{lipman2009switching}, we allow the switching costs to depend on the action.
In the TIM, this captures the idea that there may be different costs involved in reaching different locations, based on distance and topography.
We assume that there is a multiplicative factor $c$ applied to the switching costs.
When we increase this factor, the switching costs increase, while the ratio between the costs of two different switches remains the same (an increase in fuel costs increases the cost of movement between locations without changing the distances between them). 
By changing factor $c$, therefore, we can study how the value of the game and the optimal strategies change when the switching costs change uniformly, without affecting the internal structure of the costs.

%Model justification.
%\noindent\textbf{Applications.}
%Our model fits different real-life situations.
%In the security guard example (which is in fact an analogy to a cyber-security problem, see \cite{GDO2020paper}), the attacker benefits both from attacking a warehouse and from causing the defender disutility due to his defensive efforts.
%In \cite{schoenmakers2008repeated}, the switching costs represent short-term learning, so changing from one action to another is costly in that it disregards the learning from the previous stage. 
%For a non-zero-sum game, our model can be applied to calculate the reservation payoff of each player, i.e., the minimum payoff he can guarantee for himself when his switching costs are taken into account.
%The reader can find more examples of zero-sum and non-zero-sum games in papers such as \cite{lipman2000switching,lipman2009switching}.

The additivity of the payoffs naturally becomes a factor when dealing with multi-objective minimization.
\cite{rass2014numerical} proved that any equilibrium in a game with multiple objectives is a minimax strategy in a game with a single objective, where the payoff function is some weighted average of all the objective functions.
Hence, if the minimizing player wishes to minimize both his ``regular'' payoff in the repeated game and his switching costs, an alternative approach is to consider a game where the two are weighted and summed, as in our model.

Finally, as in the TIM, we use undiscounted payoffs.
This makes sense as pollution ``does not bear interest'', and pollution today is just as bad as pollution tomorrow.
Alternatively, in cybersecurity, the number of stages played over a short time (hours) is very large and they all equally important, so it is possible to assume that the discount factor is very close to patience while the horizon is infinite.

%introduction + justification to fixed strategies.
\noindent\textbf{Optimal strategies.}
A game with switching costs is equivalent to a stochastic game \citep{Shapley1953stochastic} where the states correspond to the previous action taken by the minimizing player \cite{filar1986traveling}.
Since this game is a single-controller stochastic game (only he controls the states), there exists an optimal strategy that is \emph{stationary} (depends solely on the state and not the time) and can be computed using one of several standard tools (see for example the technique in Appendix \ref{app_acoe}, as well as \cite{filar1980algorithms,stern1975stochastic,filar1986traveling,raghavan2002computing}, \cite{raghavan2003finite} and others).

In practice, however, such an approach suffers from two disadvantages.
First, if in the original game the player has $n$ pure actions, a stationary strategy in the stochastic game assigns a mixed action to each state, which yields $n^2-n$ decision variables.
This poses a computational challenge that leads to the introduction of new simpler strategies, at the expense of some payoff.
Second, in some applications such as cybersecurity, there is an additional requirement that the mixed actions of the minimizing player (defender) should be the same for every state \citep{rass2017defending,wachter2018security}, so only history-independent strategies are permitted.

We therefore also consider strategies that satisfy this additional constraint, and do not depend on the past.
We call strategies that utilize the same mixed action at each time step ``\emph{static strategies}''.
Although the problem of finding an optimal static strategy\footnote{Typically the game has no value in static  strategies. Hence, hereafter, when discussing the value in such strategies, we are referring to the minimax value, and when discussing optimal strategies, the minimax strategies.} is NP-hard, there exist algorithms that efficiently find an approximate solution \citep{GDO2020paper}.
In this paper, we compare the optimal payoff in static  strategies to the value of the game (achievable through stationary strategies) in different scenarios and provide bounds on the loss incurred from applying this constraint on strategies. 
Moreover, we provide theoretical background for the use of static strategies and explain several phenomena reported by the empirical works of \cite{GDO2020paper} and \cite{rass2017defending}, such as the fact that optimal strategies only change slightly (if at all) in response to a small change in the ratio between switching costs and stage payoffs.

%our contribution (first analysis of the problem): 1) full analsis of v(c) + strategies. 2) analysis of simpler strategies -> tilde(v) and the difference. Characterizing optimal strategies here. 3) Applying to extreme cases & generalizing to SG. Basis to NZS
\noindent\textbf{Our contribution.} 
The contribution of this paper is twofold.
First, we study the problems arising from zero-sum games with switching costs and provide a complete characterization of the payoff function and the optimal stationary strategies.
We show that these strategies belong to a particular finite set that depends only on the underlying one-shot game, not the switching costs.
This can be used as a basis for an algorithm to compute them.
Second, we study the value that can be obtained by using simpler static strategies and provide bounds on the loss incurred by using them instead of stationary ones.
We show that in some cases, the value is obtained by static strategies, so there is no loss at all.
In special cases (such as switching costs that are independent of the actions, as in \cite{lipman2000switching,lipman2009switching,schoenmakers2008repeated}), we can also provide a complete characterization of the payoff function in static strategies and show that in this case too, the optimal strategies belong to a particular finite set which depends solely on the one-shot game.

%Results. % 1) v(c) is piecewise linear - robustness, savings without changing strategy. 2) Analaysis of tilde{v}(c) and the loss of being fixed. In particular, extreme cases: symmetry, small and large c, equal costs. 3) Generalization to SG: half; quarter.

In particular, we find that the value function in stationary strategies is piece-wise linear in the weight of the switching costs, $c$.
This means that the optimal strategy depends only on the segment in which $c$ is situated, not on the exact value of $c$.
Moreover, our finding has two important implications.
First, it implies that the player can identify the optimal strategy without knowing the exact weight of the switching costs relative to the game (which is true in the setting of \cite{rass2014numerical}). % and allows robustness.
Second, it allows the player to assess the improvement in value that can be obtained if the switching costs are reduced, as the optimal strategy does not change for small changes of $c$.
For example, if the traveling inspector can increase his movement speed by a small amount (say, a $1\%$ improvement), our result ensures him that he does not need to adapt his strategy to the new speed (it is still optimal) and that the improvement in payoff is linear in this $1\%$
%For example, if the guard can switch suppliers and reduce fuel costs by a small amount (say $1\%$), our result ensures him that he does not need to adapt his strategy to the new cost (it is still optimal) and that the improvement in payoff is linear in this $1\%$.

We show that the properties of the value function under stationary strategies (mainly the piece-wise linearity) also hold for the value under static strategies, assuming that switching costs are independent of the actions being switched.
Otherwise, the optimal strategies might strongly depend on $c$.
We provide additional results on the optimal strategies in specific games by adding stronger assumptions on the structure of the game or the switching costs.
%For example, if in the game without switching costs the minimizing player has a unique optimal strategy, this strategy remains optimal (both static and stationary) in the presence of small switching costs, as suggested by \cite{lipman2009switching}.
%Similarly, 
For example, we show that when switching costs are symmetric (the amount paid when switching from pure action $i$ to $j$ is the same as when switching from $j$ to $i$), the bound on the difference between the value in stationary and static strategies is smaller than when switching costs are not symmetric.
We then generalize our work to wider classes of games and obtain general bounds on the loss arising from using history-independent strategies relative to history-dependent ones.
In particular, we show that at least a quarter (after normalization) of the optimal payoff can be obtained using very simple history-independent strategies.

%rest of the paper.
\noindent\textbf{Structure of the paper.} 
This section is followed by a short review of the related literature.
In Section \ref{s_model} we formally present the model.
In Section \ref{s_results} we provide our results.
We start with a general characterization of the value functions (Section \ref{s_propv}), followed by a series of theorems concerning the bound on the difference between the value in stationary and in static strategies.
Then, in Section \ref{s_gen}, we consider static strategies for stochastic games and deduce appropriate bounds.
A short discussion follows in Section \ref{s_conclusions}.
In Appendix \ref{app_acoe} we present the average cost of optimality (ACOE) algorithm which is used to compute optimal stationary strategies and the value of the game.
The solved example in the Appendix also provides some intuition regarding our results and in particular \thref{lem:proportiesofvc}.

\subsection{Related Literature}

Switching costs have been studied in different economic settings, mainly in non-zero-sum games, where the main questions concerned the existence of a Folk Theorem, players' ability to credibly punish each other, and the commitment that switching costs add.
A common simplifying assumption is that all switching costs are identical and independent of the actions being switched \citep{lipman2000switching,lipman2009switching}.
This requires only one additional parameter to cover switching costs and allows concise analysis of the effect of the magnitude of switching costs on different aspects of the game.
We allow switching costs to depend on actions, but take the same approach, adding one variable and studying its effect on the outcome of the game.
In our model, this variable is a multiplicative factor affecting all switching costs in the same manner.
This allows us to capture more real-life situations and consider wider-ranging possibilities in different models.

In addition to this generalization, our paper fills two gaps in the switching costs literature.
First, except for \cite{schoenmakers2008repeated}, zero-sum games are generally overlooked in explorations of switching costs.
Although they can be viewed as merely a sub-case of general games, the considerations and questions addressed for zero-sum games are substantially different from those  for non-zero-sum games and therefore require a specific focus.
This paper, by considering zero-sum games, both addresses zero-sum situations and provides a reference point for non-zero-sum games.
For example, when determining the worst-case payoff (the individually rational payoff), the auxiliary game to consider is a zero-sum game.

It should be noted that even \cite{schoenmakers2008repeated} did not consider games with switching costs, but rather games with bonuses, where one player receives a bonus from repeating the same action, due to a ``learning by doing'' effect.
They argued that if the opponent is a computer, then it does not ``learn'' and should not receive a bonus for playing the same action.
However, using equivalent transformations, the model of \cite{schoenmakers2008repeated} can be rewritten to fit our model, with the additional restriction that switching costs are independent of actions.
Thus, our paper generalizes their work in terms of optimal value in stationary and static strategies (which they call ``simple strategies'') and their dependence on switching costs relative to the payoffs of the game.
Since their model is more specific than ours, they are able to provide additional results.
For example, by assuming that the one-shot payoff matrix is regular, they provide a formula for the value of the game. 

Second, although static strategies are considered by many (\cite{schoenmakers2008repeated} and \cite{bomze2020does} are two prominent examples), the main focus is on the computational aspects of the optimal static strategies or approximations of them \citep{GDO2020paper}.
%We provide theoretical background for the use of static strategies and can explain several phenomena reported by these empirical works, such as the fact that optimal strategies only change slightly (if at all) in response to a small change in the ratio between switching costs and stage payoffs.
We provide several theoretical results regarding such strategies and the value obtained by them.
Our work, therefore, contributes both by examining the quality of approximations of optimal strategies and by suggesting some ways to improve the computational aspects of the algorithms.
In particular, we provide a bound on the difference between the optimal stationary value and the optimal static value.
Using our theoretical bound, the ``price of using static strategies'' can be calculated and compared to other factors, such as computational complexity.
Such a comparison has never been made before, since solving the game and finding the value in stationary strategies is generally very challenging computationally.

\section{The Switching Costs Model}\label{s_model}
A \emph{zero-sum game with switching costs} is a tuple $\G=(A,S,c)$ where $A=(a_{ij})$ is an $m\times n$ matrix, $S=(s_{ij})$ is an $n\times n$ matrix and $c\geq 0$.
At each time step $t$, Player~1 (the row player, the maximizer, she) chooses an integer\footnote{We assume action sets are finite. Without loss of generality, we identify them as integers representing the corresponding row or column in the payoff matrix.} $i(t)$ in the set $\{1,\ldots,m\}$ and Player~2 (the column player, the minimizer, he) chooses an integer $j(t)$ in the set $\{1,\ldots,n\}$.
The stage payoff that Player~2 pays Player~1 is $a_{i(t)j(t)}+c s_{j(t-1)j(t)}$, so Player~2 is penalized for switching from previous action $j(t-1)$ to action $j(t)$ by $c s_{j(t-1)j(t)}$ .
We assume that $s_{kj}\geq 0$ and $s_{jj}=0$, for all $k,j$. Moreover, we normalize $S$ by assuming $\min\limits_{s_{kj}\neq0}s_{kj}=1$.
Due to the multiplicative factor $c$, this normalization is without loss of generality.
In some of the examples below we avoid it for ease of exposition.

At each time period, the players are also allowed to play mixed actions.
A mixed action is a probability distribution over the player's action set.
Note that at time $t$, $j(t-1)$ is already known, even if Player~2 played a mixed action at time $t-1$.
The process repeats indefinitely, and the payoff is the undiscounted average ($\liminf$)\footnote{Other methods of defining the undiscounted payoff can be considered, all of which coincide in this model according to \cite{filar1980algorithms}.}.
More precisely, let $(\sigma,\tau)$ be a pair of strategies in the repeated game and $(\sigma(t),\tau(t))$ the mixed actions played at stage $t$ (given the history).
We define
\begin{equation}\label{eq:gamma_liminf}
\gamma(\sigma,\tau)=\E_{\sigma,\tau}\left(\liminf\limits_{T\to\infty} \tfrac{1}{T} \sum\limits_{t=1}^{T} u(\sigma(t),\tau(t)) \right)
\end{equation}
where $u(i(t),j(t))=a_{i(t)j(t)}+c s_{j(t-1)j(t)}$ is the stage payoff defined above (no switching cost is paid at $t=1$).

When $c=0$, this simply involves indefinitely repeating the one-shot matrix game $A$.
Without loss of generality, we assume in the theorems that the minimal and the maximal entries in $A$ are $0$ and $1$ (but not in the examples, for clarity).
We define $\mathcal{A}$ as the set of optimal strategies for Player~2 in the one-shot game $A$, $v$ the value of the game and $\bar{v}$ the minmax value in pure strategies.  
In the rest of the paper, we will assume that $A$ and $S$ are fixed and examine how the optimal strategies of the players and their payoffs change with $c$, the relative weight of the switching costs compared to the stage payoff of game $A$.

This repeated game is equivalent to a stochastic game \citep{Shapley1953stochastic} where each state represents the previous action of Player~2.
The set of actions in all states is the same as in $A$ and the payoff in state $k$ when Player~1 plays $i$ and Player~2 plays $j$ is $a_{ij}+c s_{kj}$.
In this stochastic game, only Player~2 controls the state transitions.\footnote{The initial state is irrelevant, as in each time step all states are reachable and the payoff from any single stage is negligible compared to the long-term average. For every state $s$, Player~2 can ensure that the game starting at $t=2$ will have initial state $s$.}
This class of games was studied by \cite{filar1980algorithms}, who showed that the value exists and obtained in strategies that depend solely on the current state, not on the entire history of the game, namely \emph{stationary strategies}.
It follows that the value of the repeated game exists as well, and it is obtained in strategies that depend solely on the previous action of Player~2.
We name these strategies too \emph{stationary strategies} and denote the value for each $c$ by $v(c)$.

\begin{definition}
A \emph{stationary strategy} is a strategy that depends in each $t$ on the previous action of Player~2, $j(t-1)$, but not on the rest of the history or $t$ itself.
Hence, a stationary strategy is a vector of $n$ mixed actions, one to follow each possible pure action of Player~2.% over the columns of $A$.
\end{definition}

\begin{example}[label=ex_123]{\textbf{Evasion game with uniform switching costs.}}\rm

Consider a situation where a defender (Player~2) needs to evade an attacker (Player~1) with three possible hides and uniform switching costs, so $S=\left(\begin{smallmatrix} 0 & 1 & 1\\ 1 & 0 & 1 \\ 1 & 1 & 0 \end{smallmatrix}\right)$.
The payoff when the attacker misses the defender is $0$, and when the attacker catches the defender depends on the location where he was caught, in the following manner: $A=\left(\begin{smallmatrix} 1 & 0 & 0\\ 0 & 2 & 0 \\ 0 & 0 & 3\end{smallmatrix}\right)$.

Without switching costs ($c=0$), the optimal strategy for Player~2 is $\tfrac{1}{11}(6,3,2)$ and the value is $v=\tfrac{6}{11}$.
For clarity, we denote the actions of Player~2 as $L$ (left), $M$ (middle) and $R$ (right). 

In the equivalent stochastic game, there are three states representing the situation after each of the actions is played.
We denote by $s_i$ the state where Player~2 played action $i \in \{L,M,R\}$ in the previous time period. The last row represents the next state after each column was chosen:
\[s_L:\left(\begin{matrix} 1 & c & c\\ 0 & 2+c & c \\ 0 & c & 3+c \\s_L & s_M & s_R \end{matrix}\right) \qquad 
s_M:\left(\begin{matrix} 1+c & 0 & c\\ c & 2 & c \\ c & 0 & 3+c \\s_L & s_M & s_R  \end{matrix}\right) \qquad 
s_R:\left(\begin{matrix} 1+c & c & 0\\ c & 2+c & 0 \\ c & c & 3 \\s_L & s_M & s_R  \end{matrix}\right) \]
In stationary strategies, each player chooses three mixed strategies, one for each state.
\hfill$\Diamond$
\end{example}

We also consider a simpler class of strategies, which are history-independent.
As discussed in the introduction, these strategies are simpler as they require much less ($n-1$ relative to $n^2-n$) parameters to define.
Moreover, in some applications they are the only one permitted.

\begin{definition}
A \emph{static strategy} is a strategy that plays the same mixed action in each stage, regardless of the history.
Hence, a static strategy is one mixed action played repeatedly.
\end{definition}

%\YT{This paragraph deals with the value in static strategies and slightly discusses the maximin, so we should re read it too. 

%NOTE - It refers to both players as restricted. Why are trying to deal with models were the strategy space is different for both players?}

%\YT{Slightly changed this paragraph to follow XV suggestion in the mail:}
Typically, when we limit the players to static strategies, the value may not exist.
This can be understood if we compare the maximin and the minimax of the repeated game in static  strategies.
In terms of the maximin, the static best response of Player~2 to a static strategy can be pure and no switching is needed.
On the other hand, the minimizing strategy of Player~2 is typically mixed and the payoff depends on $c$.
Therefore, and following the literature \citep{schoenmakers2008repeated,rass2014numerical,GDO2020paper} when considering static strategies, the figure of merit we study is the minimax -- the maximal cost that Player~2 can guarantee to pay.
If Player~1 uses static  strategy $x$ and Player~2 plays static  strategy $y$, the payoff can be written in matrix notation as
\begin{equation}\label{eq_g(c)}
g(c)(x,y)=x^{T} A y+ c y^{T} S y,
\end{equation}
%If Player~2 is limited to static strategies, Player~1 can best respond with a static  strategy (actually, a pure one) 
and the minimax can be defined as  
\begin{equation}
\tilde{v}(c)=\min\limits_y \max\limits_x g(c)(x,y).
\end{equation}
For simplicity, we sometimes refer to this as the value in static  strategies, but it should be understood as only the \emph{minimax value}.

Our results are presented in two parts.
First, we characterize both value functions.
Next, we bound the difference between them, which represents Player~2's possible loss from playing a static  strategy instead of a stationary strategy (clearly, $\tilde{v}(c)\geq v(c)$).

\noindent \textbf{The weighted average model.}
Another natural way to include switching costs in the payoff is by considering some weighted average between the stage payoff and the switching costs.
This was done by \cite{rass2014numerical} and others \citep{rass2017defending,wachter2018security,GDO2020paper}, who studied a model where players use static  strategies and the payoff is a weighted average of the switching costs and the costs of the game itself: $\alpha x^{T} A y+ (1-\alpha) y^{T} S y$, with $\alpha\in [0,1]$.
Their model is slightly different from our model.
For example, in our work, it is straightforward that as $c$ increases, the situation worsens for Player~2.
However, in their case, when the weight on the switching costs tends to $1$, the value of the game (with switching costs) tends to $0$ and the situation ``improves'' if the value without switching costs was positive.

Nevertheless, the models are equivalent.
Setting $c=\tfrac{1-\alpha}{\alpha}$, we get the same ratio between the two payoffs and, as a result, the games are strategically equivalent.
Hence, our work sheds some theoretical light on their results as well as on general economic situations in which the switching costs are added to the real costs and not averaged with them.

\section{Results}\label{s_results}

In Subsection \ref{s_propv}, we characterize the value function $v(c)$ and the minimax value function in static  strategies $\tilde{v}(c)$.
We benefit from these characterizations in Subsection \ref{s_boundmain}, where we bound the difference between the two for any game with switching costs.
In Subsections \ref{s_smallc} and \ref{s_constantsc}, we find tighter bounds by considering a more restrictive model. 
With the appropriate restrictions on $c$, the bound can even reduce to zero, which implies that in some cases there is a static  strategy that is optimal. 
We also make restrictions on the structure of the switching-cost matrix $S$ by making the off-diagonal elements equal. 

Clearly, if Player~2 has an optimal \emph{pure} action in $A$ without switching costs, then this action is also optimal with switching costs and $v(c)=\tilde{v}(c)=v(0)$ for all $c\geq 0$.
In the rest of the paper, we therefore assume that the value of the game $A$ without switching costs cannot be obtained in pure actions.

\subsection{The Properties of the Value Functions}\label{s_propv}

In this subsection we characterize $v(c)$ where $c \geq 0$, namely the value function in stationary strategies  (\thref{lem:proportiesofvc}). 
We show that there are finitely many segments of the nonnegative real line such that for each segment there exists an optimal stationary strategy of Player~2 that is independent of the exact $c$ (\thref{cor:optimalstationaryispiecewise}).
We also characterize $\tilde{v}(c)$ where $c \geq 0$, namely the minimax value function in static  strategies (\thref{lem:propertiesoftildevc}).
We continue Example \ref{ex_123} to demonstrate these value functions and the corresponding optimal strategies.
We then provide two counterexamples to two natural conjectures.
In Example \ref{ex_tildevc_nonlinear}, we show that  \thref{cor:optimalstationaryispiecewise} does not hold for $\tilde{v}(c)$ in general.
In Example \ref{ex_vcneqtildevc_cyclic}, we show that it is possible for stationary strategies to strictly outperform static  strategies for all $c>0$.

\begin{proposition}\thlabel{lem:proportiesofvc}
For every $c\geq 0$, the game has a value in stationary strategies denoted by $v(c)$.
This function is continuous, increasing, concave, and piece-wise linear.
\end{proposition}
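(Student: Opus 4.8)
The plan is to reduce the computation of $v(c)$ to a parametric linear program whose feasible region is fixed and in which $c$ enters only the objective, linearly; all four properties then follow from the geometry of that program.

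Existence of the value in stationary strategies is already guaranteed by the single-controller structure (\cite{filar1980algorithms}), so the task is to analyze the dependence on $c$. First I would pass from stationary strategies to occupation measures. Since only Player~2 controls the transitions, let $x_{kj}$ denote the long-run frequency of being in state $k$ and playing column $j$ (thereby moving to state $j$). The admissible $x$ form the polytope
\[
X=\Big\{\,x\ge 0:\ \sum_{k,j}x_{kj}=1,\ \ \sum_{\ell}x_{j\ell}=\sum_{k}x_{kj}\ \ (\forall j)\,\Big\},
\]
which is independent of $c$ (the second family of constraints is flow conservation, expressing that the frequency of entering state $j$ equals that of leaving it). For a fixed $x$, Player~1 cannot influence transitions, so her best response maximizes the immediate payoff state by state; together with the switching term this yields, by the linear-programming characterization of the average payoff for single-controller games,
\[
v(c)=\min_{x\in X}\big[\,f(x)+c\,g(x)\,\big],\qquad f(x)=\sum_{k}\max_{i}\sum_{j}a_{ij}x_{kj},\quad g(x)=\sum_{k,j}s_{kj}\,x_{kj},
\]
where $f$ is convex and piece-wise linear, $g$ is linear with $g\ge 0$, and neither depends on $c$.

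From this representation the conclusions are immediate. Writing $f$ in epigraph form ($t_k\ge\sum_j a_{ij}x_{kj}$ for every row $i$ and state $k$) turns the right-hand side into a linear program over the fixed polyhedron $P=\{(x,t):x\in X,\ t_k\ge\sum_j a_{ij}x_{kj}\}$ with objective $\sum_k t_k+c\,g(x)$. Because $P$ is pointed and the objective is bounded below on it, the minimum is attained at a vertex; as $P$ has finitely many vertices $(x^\ast,t^\ast)$, we obtain
\[
v(c)=\min_{(x^\ast,t^\ast)\in\mathrm{vert}(P)}\Big[\sum_k t^\ast_k+c\,g(x^\ast)\Big],
\]
a pointwise minimum of finitely many affine functions of $c$. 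Hence $v$ is concave, piece-wise linear (with finitely many pieces), and therefore continuous on $[0,\infty)$. Monotonicity is argued separately: for $c_1<c_2$, taking $x_2$ optimal at $c_2$ and using $g\ge 0$ gives $v(c_2)=f(x_2)+c_2 g(x_2)\ge f(x_2)+c_1 g(x_2)\ge v(c_1)$, so $v$ is increasing (strictly so under the standing assumption that no pure action is optimal in $A$, since then every optimal $x$ has $g(x)>0$).

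I expect the only delicate step to be the reduction in the second paragraph, namely justifying that the occupation-measure linear program computes the $\liminf$-average value in stationary strategies even when the Markov chain induced by Player~2 is not irreducible (the multichain case). This is exactly where the single-controller structure is essential, and it is the one point that must be argued carefully rather than by routine parametric-LP reasoning; everything downstream of the displayed formula for $v(c)$ is elementary.
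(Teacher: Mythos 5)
Your proposal is correct in substance, but it follows a genuinely different route from the paper. The paper invokes the Filar--Raghavan reduction of the single-controller stochastic game to an auxiliary \emph{finite matrix game} whose columns are the pure stationary strategies of Player~2 and whose entries have the form $b_{ij}+\beta_j c$, and then runs a support-enumeration argument: for each support pair the indifference system is independent of $c$, so the value is pieced together from finitely many affine functions, with continuity (via polynomial games) and concavity argued separately. You instead cast the problem as a \emph{parametric linear program} over the occupation-measure polytope $X$, which is independent of $c$, with $c$ entering only the objective; the vertex representation then yields concavity, piece-wise linearity, and continuity simultaneously, as a finite minimum of affine functions, and monotonicity follows from $g\geq 0$. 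Your route is cleaner downstream and has the added benefit of making \thref{cor:optimalstationaryispiecewise} immediate: on each linearity segment the same minimizing vertex (hence the same stationary strategy of Player~2) can be held fixed.

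Two caveats. First, the step you flag---that $v(c)=\min_{x\in X}\left[f(x)+c\,g(x)\right]$ computes the $\liminf$-average value even in the multichain case---is indeed the crux, and as written your proof is incomplete there; it can, however, be closed in a few lines. Given a minimizer $x^*$, its state-marginal is (by the flow-conservation constraints) a stationary distribution of the chain induced by the associated mixed actions, hence decomposes as $x^*=\sum_r\lambda_r x^{(r)}$ with each $x^{(r)}\in X$ supported on a single recurrent class; since $f$ is convex and $g$ linear,
\begin{equation}
f(x^*)+c\,g(x^*)\;\leq\;\sum_r\lambda_r\left[f(x^{(r)})+c\,g(x^{(r)})\right],\nonumber
\end{equation}
so optimality of $x^*$ forces some single-class $x^{(r)}$ to be optimal as well; within one recurrent class the ergodic theorem gives a.s.\ convergence of empirical frequencies, so Player~2 realizes the bound (the state-independence of the value, noted in the paper, is what licenses first steering into that class). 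Conversely, any stationary strategy of Player~2 produces a limiting state-action frequency lying in $X$ almost surely, so a myopic best reply of Player~1 guarantees her at least $\min_X\left[f+c\,g\right]$. Second, your parenthetical claim that $v$ is \emph{strictly} increasing whenever $A$ has no optimal pure action is false: in \thref{ex_vcneqtildevc_cyclic} no pure action of Player~2 is optimal in $A$, yet the cyclic stationary strategy switches only at zero cost, i.e.\ $g(x)=0$, and $v(c)=\tfrac12$ for all $c$. The proposition asserts only weak monotonicity, so this does not affect the result, but the parenthetical should be deleted.
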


%\XV{New proposition for the first part. Arguments are the same, I precised some things and ordered differently}

\begin{proof}
	This game is equivalent to a stochastic game where the states correspond to the previous pure actions chosen by Player~2.
	In this stochastic game, only one player controls the transitions, so existence follows from \cite{filar1980algorithms} and \cite{filar1984matrix}.
	
	Moreover, according to \cite{filar1984matrix}, the value of the game is the same as the value of a one-shot matrix game, whose pure actions are the pure stationary actions in the stochastic game (essentially, each pure strategy in the one-shot game is a vector of size $n$ dictating which pure actions to choose at each state).
	The payoffs include the corresponding payoffs from matrix $A$ as well as $c$-dependent payoffs from the transitions.
	Since only Player~2 controls the states, the $c$-dependent part is the same for all entries in the same column and is linear in $c$.
	For example, if the pure action of Player~2 is to alternate between actions $2$ and $1$, the $c$-dependent part of the column is $\tfrac{s_{12}+s_{21}}{2}c$.
	We denote the entry in the $i,j$ place of this one-shot game by $b_{ij}+\beta_j c$.
	
In order to characterize $v(c)$, we look at the value function of this one-shot matrix game. The value of this game is determined by the optimal strategy profiles, which are the same as the Nash-equilibrium strategy profiles. We show that given the support of the Nash-equilibrium strategy profiles, the expected payoff from Player~1's best response is linear in $c$. We also show that the value function is pieced together from these finitely many linear functions, so it is piece-wise linear in $c$.

First let $I_1$ be some subset of the rows and $I_2$ be a subset of the columns of the auxiliary  one-shot game. Fix the cost $c$ to $0$. We check whether Player~2 can make Player~1 indifferent among all the actions in $I_1$, using a completely mixed action over $I_2$, and can make Player~1 prefer them over actions not in $I_1$. Let $k$ be a pure action in $I_1$. We look for a vector $y\in \RR^n$ of Player $2$, such that
\begin{itemize}
\item the support of $y$ is included in $I_2$:$\sum\limits_{j\in I_2} y_j =1$,
\item the support of $y$ is exactly $I_2$: $\forall j\in I_2:\,y_j>0$,
\item Player~1 is indifferent among the actions in $I_1$:
\begin{equation}\label{ind_0}
\forall l\in I_1\setminus\{k\}:\quad\sum\limits_{j\in I_2} y_j b_{kj}= \sum\limits_{j\in I_2} y_j b_{lj},
\end{equation}
\item Player~1 prefers actions in $I_1$ over other actions:
\begin{equation}\label{dom_0}
\forall l\notin I_1:\quad\sum\limits_{j\in I_2} y_j b_{kj} \geq \sum\limits_{j\in I_2} y_j b_{lj}.	
\end{equation}
\end{itemize}
If there exists a solution of this system of equations, we denote by $y(I_1,I_2)$ one solution of this system.  Notice that we do not impose any restriction on the strategy of Player~1.

%If we add $\sum\limits_{j\in I_2} y_j =1$ and  we get a system of linear equations and inequalities in $|I_2|$ variables which is independent of $c'$.
%If this system has a solution, then there exists at least one solution that is independent of $c'$.
%Let us denote it by $y(I_1,I_2)$.

Furthermore, let $(I_1,I_2)$ be such that $y(I_1,I_2)$ is well defined then $y(I_1,I_2)$ also makes Player~1 indifferent among actions in $I_1$ and prefers actions in $I_1$ over other actions for any $c'\geq0$.  Indeed, let $c'>0$ then Equations (\ref{ind_0}) and (\ref{dom_0}) yields the same equations with the cost:
 Player~1 is indifferent among the actions in $I_1$
\begin{equation}
\forall l\in I_1\setminus\{k\}:\quad\sum\limits_{j\in I_2} y_j(b_{kj}+\beta_j c')= \sum\limits_{j\in I_2} y_j(b_{lj}+\beta_j c'),
\end{equation}
and prefers them over other actions if
\begin{equation}
\forall l\notin I_1:\quad\sum\limits_{j\in I_2} y_j(b_{kj}+\beta_j c')\geq \sum\limits_{j\in I_2} y_j(b_{lj}+\beta_j c').	
\end{equation}
%The $c'$-part being equal on both sides, this is a direct consequence of Equations (\ref{ind_0}) and (\ref{dom_0}).
Moreover, let $\sigma$ be a best-response to $y(I_1,I_2)$ then the payoff is a linear function of the cost. Indeed, when Player~2 plays according to $y(I_1,I_2)$, the expected payoff of each row is a linear function of $c$ with the same slope (as shown above), and a higher constant for the $I_1$ rows relative to the others. Hence, when Player~1 best responds, he chooses an action in $\D (I_1)$ and the payoff will be a linear function of $c$, denoted by $l(I_1,I_2)(c)$.\\

Let us now prove that for every $c \geq 0$, there exists $I_1(c)$ and $I_2(c)$ such that the value coincides with $l(I_1(c),I_2(c))(c)$. Fix $c \geq 0$ and a pair of optimal strategy $(x^*,y^*)$. This profile has a corresponding support pair $(I_1(c),I_2(c))$. By definition, $y(I_1(c),I_2(c))$ makes Player~1 indiffferent between his actions, hence the pair $(x^*,y(I_1(c),I_2(c)))$ is a Nash-equilibrium and therefore $y(I_1(c),I_2(c))$ is optimal. It follows that
\[
v(c)=l(I_1(c),I_2(c))(c).
\]
Moreover, the value function $v(\cdot)$ is continuous in $c$ (this is a polynomial game, see \cite{shapley1950basic}), hence the pair $(I_1(c),I_2(c))$ can only change between two pairs inducing different linear functions when these two linear functions are equal. Since they are a finite number of supports, there are finitely many intersections possible and therefore $v(\cdot)$ is piece-wise linear in $c$.

%	Note that we did not establish that $y(I_1,I_2)$ is optimal for Player~2.
%	It is optimal only if it is a best response to Player~1's best response to $y(I_1,I_2)$, or, equivalently, if Player~1 can make Player~2 indifferent among all actions in $I_2$ by mixing the actions in $I_1$ (which typically depends on $c$).
%	For a given $c^*$, take an optimal strategy profile.
	
%	Then  is an optimal strategy for Player~2 and $v(c^*)=l(I^*_1,I^*_2)(c^*)$.
	
In addition, since all the entries in $S$ are non-negative, all the coefficients of $c$ (the slopes of the lines) are non-negative, so $v(c)$ is an increasing function of $c$.
	
Finally, the function $v(c)$ is concave.
Let $[c_k,c_{k+1}]$ and $[c_{k+1},c_{k+2}]$ be two segments where $v(c)$ is linear.
For $c^*\in (c_k,c_{k+1})$, let $y(c^*)$ be the optimal action of Player~2 chosen using the above method.
If Player~2 plays $y(c^*)$ regardless of $c$, the payoff is a linear function of $c$ that coincides with the value on $[c_k,c_{k+1}]$.
In the region $[c_{k+1},c_{k+2}]$, the value function must be below this line, so the slope of $c$ must decrease.
Hence, $v(c)$ is a piece-wise linear function with decreasing slope and thus it is concave.\hfill\end{proof}

If on an interval the value function $v(c)$ is linear, then there is a Player~2 strategy that is optimal for this entire interval, irrespective of the precise value of $c$.
A direct result of the piece-wise linearity of $v(c)$ is that the exact value of $c$ does not need to be known to find an optimal strategy, only the segment that contains it matters.
This means that the optimal strategies are robust to small changes of $c$: knowing the exact $c$ is not necessary to play optimally, and it is almost universally unnecessary to adjust the optimal strategy as $c$ changes.
Moreover, this shows that when the model arises from a weighted average of different goal functions \citep{rass2014numerical}, the exact weight given to the switching costs compared to the stage payoff is of smaller significance, if any (as the authors indeed report for static  strategies).

Even if there is no information on $c$, an optimal strategy for Player~2 can still be found in a finite set that depends only on $A$.
This observation can serve as a basis for an algorithm to compute an optimal stationary strategy in the game with switching costs.
Interestingly, this is not true for Player~1.
In general, the optimal strategy for Player~1 depends on the exact $c$ and changes continuously with $c$.

\begin{corollary}\thlabel{cor:optimalstationaryispiecewise}
There exists $K\in \NN$ and $0=c_0<c_1<\ldots<c_K<c_{K+1}=\infty$ such that for each segment $[c_i,c_{i+1}]$ there exists an optimal stationary strategy of Player~2 that is independent of the exact $c$.
\end{corollary}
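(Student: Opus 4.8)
The plan is to read off the corollary almost directly from the proof of \thref{lem:proportiesofvc}, since that proof already produces, for each value of $c$, an optimal Player~2 strategy of the form $y(I_1,I_2)$ that does not depend on $c$. What remains is only to organize the finitely many support pairs into the claimed segments and to verify that a single such strategy is optimal across an entire closed segment, not merely at one interior point.

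First I would take the breakpoints of the piece-wise linear function $v(\cdot)$, whose existence and finiteness are guaranteed by \thref{lem:proportiesofvc}, and call them $0=c_0<c_1<\cdots<c_K<c_{K+1}=\infty$, so that $v$ is affine on each $[c_i,c_{i+1}]$. Fixing one segment, I would pick an interior point $c^*\in(c_i,c_{i+1})$ together with the support pair $(I_1,I_2)$ arising from an optimal profile at $c^*$, exactly as in the proof of the proposition; this yields the $c$-independent strategy $y(I_1,I_2)$ with $v(c^*)=l(I_1,I_2)(c^*)$.

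The core of the argument is to upgrade optimality at $c^*$ to optimality on the whole segment, and two facts from the proof of \thref{lem:proportiesofvc} do this. First, $y(I_1,I_2)$ is a feasible strategy for every $c\ge 0$ and, against it, Player~1's best response yields exactly the linear payoff $l(I_1,I_2)(c)$; hence by the definition of the value, $v(c)\le l(I_1,I_2)(c)$ for all $c\ge 0$, so $l(I_1,I_2)$ is a global upper envelope for $v$. Second, on $[c_i,c_{i+1}]$ the difference $l(I_1,I_2)-v$ is affine (both functions are affine there), nonnegative by the first fact, and vanishes at the interior point $c^*$; an affine nonnegative function on an interval that has an interior zero is identically zero, so $l(I_1,I_2)(c)=v(c)$ for every $c\in[c_i,c_{i+1}]$. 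Combining the two, $y(I_1,I_2)$ guarantees a payoff equal to $v(c)$ throughout the segment and is therefore optimal there.

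I expect the only delicate point to be the endpoint behaviour: at a breakpoint $c_i$ the optimal support pair may switch, so one cannot naively say ``the optimal strategy is globally constant.'' The resolution is that the corollary only asserts the existence of one $c$-independent optimal strategy per segment, and the upper-envelope argument above shows that the interior witness $y(I_1,I_2)$ remains optimal even at the shared endpoints (by continuity of $v$). I would close by noting that $y(I_1,I_2)$ lies in the finite family indexed by the support pairs $(I_1,I_2)$, which depends only on $A$ and $S$ and not on $c$, matching the claim that an optimal strategy can always be located within a finite set determined by the one-shot data.
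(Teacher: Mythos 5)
You reuse the paper's final envelope step (a nonnegative affine function with an interior zero vanishes identically), but you apply it to the wrong object, and this is a genuine gap rather than a presentational one. In the proof of \thref{lem:proportiesofvc}, $y(I_1,I_2)$ is a mixed strategy of the \emph{auxiliary matrix game} of \cite{filar1984matrix}: its pure actions are pure stationary strategies of the stochastic game, so $y(I_1,I_2)$ is a lottery $\sum_k p_k\delta_{\tau_k}$ over such strategies. This is not a stationary strategy (a vector of mixed actions, one per state), which is what \thref{cor:optimalstationaryispiecewise} asserts exists. Worse, its guarantee does not transfer from the auxiliary game to the stochastic game: in the auxiliary game Player~1 commits to a row, so the lottery concedes only $\max_i \sum_k p_k (b_{ik}+\beta_k c)=l(I_1,I_2)(c)$; but in the stochastic game Player~1 observes the states (Player~2's past actions), can identify after finitely many stages which $\tau_k$ was realized---its induced play is a deterministic, eventually periodic action sequence---and can then best respond to it, earning roughly $\sum_k p_k \max_i (b_{ik}+\beta_k c)$, which in general strictly exceeds $l(I_1,I_2)(c)$. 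For instance, in matching pennies ($A=\left(\begin{smallmatrix}1&0\\0&1\end{smallmatrix}\right)$, uniform $S$, small $c>0$) every lottery over pure stationary strategies concedes at least $1$, since Player~1 eventually matches the realized deterministic sequence, while $v(c)\leq\tfrac{1}{2}+\tfrac{c}{2}<1$. So your sentence ``$y(I_1,I_2)$ guarantees a payoff equal to $v(c)$ throughout the segment'' is false as a statement about the stochastic game; what your argument actually establishes is only that $v(c)=l(I_1,I_2)(c)$ on the segment, i.e., linearity of the value, which is already the content of \thref{lem:proportiesofvc}.

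The missing step---and it is exactly what the paper's proof supplies---is a genuine stationary strategy with a $c$-independent linear guarantee. The paper fixes an optimal \emph{stationary} strategy $\sigma_{c^*}$ at an interior $c^*$ (it exists by \cite{filar1980algorithms}), notes that $\sigma_{c^*}$ alone determines the transition probabilities, hence the long-run fraction of time spent in each state, independently of $c$; then, since within each state all entries of a given column have the same slope in $c$, Player~1's best-response choices are $c$-independent and his best-response payoff $f_{c^*}(c)$ is affine in $c$, after which your envelope step applies to $f_{c^*}$. Alternatively, your proof could be repaired by stationarizing $y(I_1,I_2)$: in state $s$, play the mixture of the actions $\tau_k(s)$ with weights proportional to $p_k$ times the long-run frequency with which $\tau_k$ visits $s$, and then verify (using that Player~2 alone controls transitions, so Player~1's best response decomposes state by state) that this stationary strategy guarantees $l(I_1,I_2)(c)$ for every $c$ in the segment. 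Either way, that verification is the substance of the corollary and cannot be skipped.
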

\begin{proof}
Since the function $v(c)$ is piece-wise linear, the segments are chosen such that in each one, $v(c)$ is linear.
Fix $c^*\in(c_i,c_{i+1})$ and fix $\sigma_{c^*}$ an optimal strategy for Player~2 in the stochastic game.
Suppose Player~2 plays $\sigma_{c^*}$ regardless of $c$ and Player~1 best responds.

When Player~2 plays $\sigma_{c^*}$, he fixes the transition probabilities between states, and these are now independent of $c$.
Hence, the percentage of time spent in each state is constant and independent of $c$.
In each state, all entries in a particular column have the same slope (in state $i$, column $j$ has slope $s_{ij}$), so their mix according to $\sigma_{c^*}$ results in parallel lines (parallel in each state, not necessarily parallel between states).
Player~1 best responds in each state, but since the lines are parallel, his choice does not depend on $c$.
Thus, the payoff in each state is a linear function of $c$, and the total payoff, which is a weighted average of these functions with the percentage of time spent in each state as weights, is a linear function of $c$.
Denote this function by $f_{c^*}(c)$.

Since $v(c)$ is the value, $v(c)\leq f_{c^*}(c)$ for all $c$ in the segment with equality at $c^*$.
Thus, these two lines that intersect once must coincide and $\sigma_{c^*}$ obtains the value in the entire segment.
\hfill\end{proof}

The concavity of $v(c)$ can be intuitively understood as follows.
For a large $c$, it is more costly to switch so Player~2 chooses strategies that incur smaller switching costs despite some loss in the stage game $A$.
Hence, as $c$ increases, the coefficient of $c$ decreases and the rate at which the value increases with $c$ is reduced.
This observation can serve in practice to eliminate from the choice of strategies those with too-high expected per-stage switching costs.

The minimax value in static  strategies exhibits the same properties, except piece-wise linearity.
Here, it can only be established that the value function is semi-algebraic, so there is no version of  \thref{cor:optimalstationaryispiecewise} for this case.
It is quite possible that the optimal strategy depends on $c$, as shown in Example \ref{ex_tildevc_nonlinear}.

\begin{proposition}\thlabel{lem:propertiesoftildevc}
%\begin{lemma}\label{lem:propertiesoftildevc}
	The minimax value $\tilde{v}(c)$ in static  strategies is a continuous, increasing, and concave semi-algebraic function.
\end{proposition}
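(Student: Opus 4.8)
The plan is to push the inner maximization through and reduce $\tilde v$ to a one-parameter family of minimizations over $y$, after which concavity, monotonicity and continuity are read off almost directly; the one genuinely nontrivial ingredient is semi-algebraicity. Since the switching-cost term $c\,y^{T}Sy$ in the payoff $g(c)(x,y)=x^{T}Ay+c\,y^{T}Sy$ does not involve $x$, and $x\mapsto x^{T}Ay$ is linear on the simplex, the inner maximum is attained at a pure row. Thus $\max_x g(c)(x,y)=\max_{1\le i\le m}(Ay)_i+c\,y^{T}Sy=:w(c,y)$, and hence $\tilde v(c)=\min_y w(c,y)$, the minimum taken over the compact simplex of Player~2's mixed actions. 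Continuity of $w(c,\cdot)$ together with compactness guarantees the minimum is attained, so $\tilde v$ is well defined.

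For concavity and monotonicity I would argue fiberwise. For each fixed $y$ the map $c\mapsto w(c,y)$ is affine, with slope $y^{T}Sy$ and intercept $\max_i(Ay)_i$; therefore $\tilde v=\inf_y w(\cdot,y)$ is a pointwise infimum of affine functions and is concave. Moreover $y^{T}Sy\ge 0$ because $S$ has nonnegative entries and $y\ge 0$, so each $w(\cdot,y)$ is nondecreasing; taking $y^{*}$ a minimizer at $c_2$, the chain $\min_y w(c_1,y)\le w(c_1,y^{*})\le w(c_2,y^{*})=\min_y w(c_2,y)$ for $c_1\le c_2$ shows $\tilde v$ is nondecreasing, i.e. increasing in the same (weak) sense as in \thref{lem:proportiesofvc}. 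Continuity I would obtain as Lipschitz continuity from the uniform bound on the slopes: writing $M=\max_{k,j}s_{kj}$ we have $0\le y^{T}Sy\le M$, so $|w(c_1,y)-w(c_2,y)|\le M|c_1-c_2|$ uniformly in $y$, and combining this with the elementary inequality $|\min_y f-\min_y g|\le\max_y|f-g|$ gives $|\tilde v(c_1)-\tilde v(c_2)|\le M|c_1-c_2|$. (Concavity already forces continuity on $(0,\infty)$; the Lipschitz bound additionally covers the endpoint $c=0$.)

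The main obstacle is the semi-algebraic claim, which is the only step requiring a real theorem. Here the plan is to realize the epigraph $P=\{(c,v):\tilde v(c)\le v\}$ as the projection onto the $(c,v)$-coordinates of the set $\{(c,y,v): y\ge 0,\ \sum_j y_j=1,\ (Ay)_i+c\,y^{T}Sy\le v \text{ for all } i\}$. This latter set is cut out by finitely many polynomial (in)equalities in $(c,y,v)$ and is thus semi-algebraic, so by the Tarski--Seidenberg theorem its projection $P$ is semi-algebraic. The graph of $\tilde v$ is then recovered as $\{(c,v)\in P:\ \forall\,v'<v,\ (c,v')\notin P\}$, again semi-algebraic by closure of semi-algebraic sets under projection and complementation; hence $\tilde v$ is a semi-algebraic function. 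The delicate point is precisely that the min--max must be expressed through existential and universal quantifiers over the simplex before invoking quantifier elimination: unlike $v(c)$, there is no finite auxiliary matrix game whose value we can analyze directly, which is also why $\tilde v$ need not be piece-wise linear and no analogue of \thref{cor:optimalstationaryispiecewise} is available.
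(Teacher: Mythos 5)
Your proposal is correct and follows essentially the same route as the paper's proof: both reduce $\tilde v(c)$ to $\min_y\bigl(\max_i (Ay)_i + c\,y^{T}Sy\bigr)$, exploit that the inner maximizer over $x$ is independent of $c$, and derive monotonicity, concavity, and semi-algebraicity from this representation. The only differences are in packaging and level of detail: your infimum-of-affine-functions argument for concavity is an equivalent shortcut for the paper's direct computation with convex combinations, and your explicit Lipschitz bound and Tarski--Seidenberg projection argument flesh out the continuity and semi-algebraicity claims that the paper asserts in a single line.
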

\begin{proof}
Let $g(c)(x,y)=x^{T} A y+ c y^{T} S y$ be a function from $\RR_+\times \Delta(\{1,\ldots,m\})\times\Delta(\{1,\ldots,n\})$ to $\RR$.
By definition, 
	\begin{align}\label{eq:tildevdefinedasminimax}
	\tilde{v}(c)=\min\limits_y\max\limits_x g(c)(x,y)=\min\limits_{y \in \Delta(\{1,\ldots,n\})} \left\{\max\limits_{x \in \Delta(\{1,\ldots,m\})} \{x^T Ay\} + cy^T Sy \right\}.
	\end{align}
Note that $\argmax\limits_x g(c)(x,y)$ depends solely on $y$ and not on $c$ (and can always be one of $\{1,\ldots,m\}$).
Since $g(c)(x,y)$ is continuous and since the maximum and the minimum of a continuous function is also continuous, so is $\tilde{v}(c)$.
Moreover, $\tilde{v}(c)$ is semi-algebraic since $g(c)(x,y)$ is a polynomial in each variable.

	\textit{The function $\tilde{v}(c)$ is increasing:}
	Let $0\leq c_1 < c_2$. We show that $\tilde{v}(c_1)\leq \tilde{v}(c_2)$.
	Let $y_{c_2}$ be the $\argmin$ of the minimization in Eq.~\eqref{eq:tildevdefinedasminimax}.
	%pure minimax strategy for Player~2 in game $g(c_2)$. Then
	\begin{align}
	\tilde{v}(c_2)=\max\limits_{x \in \Delta(\{1,\ldots,m\})} \{g(c_2)(x,y_{c_2})\}\geq \max\limits_{x' \in \Delta(\{1,\ldots,m\})} \{g(c_1)(x',y_{c_2})\}\geq \tilde{v}(c_1). \nonumber
	\end{align}
	The equality follows from the definition of $\tilde{v}(c_2)$. 
	The first inequality follows from $c_1 < c_2$ and $S \geq 0$ (the maximizing $x$ is the same as it depends solely on $y_{c_2}$). 
	The last inequality follows from the definition of $\tilde{v}(c_1)$.

	\textit{The function $\tilde{v}(c)$ is concave:}
	Let $0\leq c_1 < c_2$ and let $\beta \in (0,1)$.
	Then
	\begin{eqnarray*}
	\tilde{v}(\beta c_1 + (1-\beta)c_2) 
	&=&\min\limits_{y} \left\{\max\limits_{x} \{x^T Ay\} + (\beta c_1 + (1-\beta)c_2)y^T Sy \right\}  \\
	&=&\min\limits_{y} \left\{\beta \max\limits_x g(c_1)(x,y) +(1-\beta) \max\limits_x g(c_2)(x,y) \right\}
 \\
	 &\geq &	\min\limits_{y} \left\{\beta \max\limits_x g(c_1)(x,y)\right\} +	\min\limits_{y} \left\{(1-\beta) \max\limits_x g(c_2)(x,y) \right\}\\
		&=& \beta \tilde{v}(c_1) + (1-\beta)\tilde{v}(c_2).
	\end{eqnarray*}
The first and last equalities are the definition of $\tilde{v}$. 
The second equality is based on the fact that $\argmax\limits_x g(c)(x,y)$ does not depend on $c$.
The inequality is obtained since minimizing each term individually and then summing results yields a smaller result than first summing and then minimizing the entire expression. 
\hfill\end{proof}

In the following example, we demonstrate how the two values behave as a function of $c$.
Only the results are presented here; the calculations are in Appendix \ref{app_acoe}.

\begin{example}[continues=ex_123]{\textbf{Evasion game with uniform switching costs.}}\rm

\noindent\textbf{The Value of the Game -- $v(c)$}

\textit{Case 1: $c<\tfrac{22}{31}$:}
The optimal strategy for Player~2 in game $A$ is still optimal for this case, and the value is $v(c)=\tfrac{6}{11}+\tfrac{72}{121}c$.

\textit{Case 2: $\tfrac{22}{31}\leq c\leq \tfrac{121}{156}$:}
The switch from $L$ to $R$ is too costly, so action $R$ is removed from state $s_L$ and the optimal strategy becomes $\tfrac{2}{3}(L),\tfrac{1}{3}(M)$.
The optimal strategies in the other states are the same as in the game without switching costs.
The value is $v(c)=\tfrac{156c+198}{319}$.

\textit{Case 3: $c\geq \tfrac{121}{156}$:}
The switch from $L$ to $M$ is too costly, so $s_L$ becomes an absorbing state with payoff $1$.

\smallskip

\noindent\textbf{The Minimax Value in Static Strategies -- $\tilde{v}(c)$}

In this simple case there is no intermediate strategy for Player~2.
He either plays the optimal strategy of game $A$ when switching costs are low enough or he plays the pure minimax strategy of $A$ and never switches when they are high enough.
A direct computation reveals that the cut-off $c$ is $\tfrac{55}{72}$.

\bigskip

\noindent\textbf{Comparison:}
Figure \ref{Fig:FixedVsStationary} shows the static  solution $\tilde{v}(c)$ (dashed) and the stochastic game solution $v(c)$ (solid) in the region where they differ.
The maximal difference is found at $c=\tfrac{55}{72}$ and is equal to $\tfrac{37}{2088}$. \hfill$\Diamond$
\end{example}

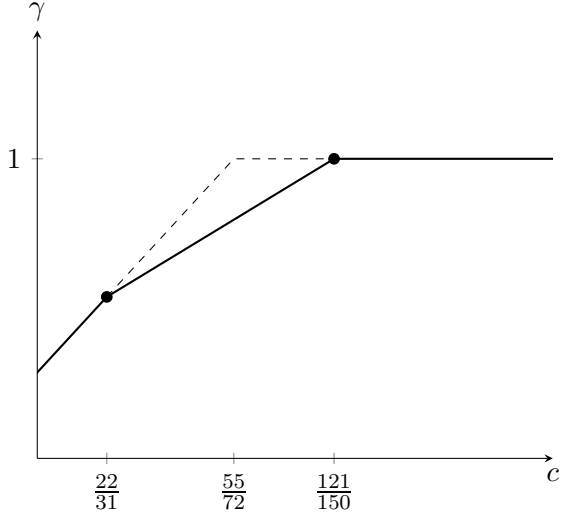
\begin{figure}
\begin{center}
\begin{tikzpicture}

\begin{axis}[axis x line=middle, axis y line=middle, xmin=0.68,  xmax=0.9, ymin=0.93, ymax=1.03, xtick={22/31, 55/72, 121/150}, xticklabels={$\tfrac{22}{31}$,$\tfrac{55}{72}$,$\tfrac{121}{150}$}, ytick={6/11, 1}, yticklabels={$\tfrac{6}{11}$,$1$}, xlabel={$c$}, ylabel={$\gamma$},  x label style={at={(axis description cs:1,0)},anchor=north},    y label style={at={(axis description cs:0,1)},anchor=south}]
%\pgfplotsset{ticks=none}
	\addplot[mark=*, only marks] coordinates {(0,6/11) (22/31,30/31) (121/150,1)}; %dots
	\addplot[thick] coordinates{(0,6/11) (22/31,30/31) (121/150,1) (1,1)};%line
	%Without the 2action solution:
	\addplot[dashed] coordinates{(22/31,30/31) (55/72,1) (121/150,1)};%line
	%With the 2 action solution:	
	%\addplot[dashed] coordinates{(22/31,30/31) (33/41,42/41) (3/4,1) (55/72,1) (121/150,1)};%line
	
\end{axis}
\end{tikzpicture}
\caption{The value in optimal (stationary) strategies $v(c)$ (solid) and the minimax value in static  strategies $\tilde{v}(c)$ (dashed) of the evasion game from Example \ref{ex_123}.}
\label{Fig:FixedVsStationary}
\end{center}
\end{figure}

With additional assumptions, the result of \thref{lem:propertiesoftildevc} can be strengthened.
In particular, in the common case where all switches are equally costly (uniform switching costs), the result can be strengthened to piece-wise linearity (\thref{lem:tildevc_pw_linear_constantS}).
In the latter case, the set of optimal static  strategies can be fully characterized and it is finite.
This is not true in the general case, where the only conclusion that can be made (see the remarks after \thref{lem:tildevc_pw_linear_constantS}) is that when playing an optimal static  strategy, Player~2 keeps Player~1 indifferent in game $A$ between at least two of his pure actions.

\begin{example}\textbf{A game where $\tilde{v}(c)$ is not piece-wise linear} \rm \thlabel{ex_tildevc_nonlinear}

Suppose $A=\left(\begin{smallmatrix} -200 & 1 & 200\\ 200 & 1 & -200 \end{smallmatrix}\right)$
and $S=\left(\begin{smallmatrix} 0 & 1 & 100\\ 1 & 0 & 1 \\ 100 & 1 & 0 \end{smallmatrix}\right)$.
It is easy to verify that for $c\leq \tfrac{1}{98}$ the optimal strategy (static  and stationary) is the optimal strategy of game $A$ (choosing the first and third columns with equal probability) that leads to a payoff of $\tilde{v}(c)=50c$, and that for $c\geq \tfrac{1}{2}$ the optimal strategy (static  and stationary) is the pure minimax with a payoff of $\tilde{v}(c)=1$.
For $c\in\left[\tfrac{1}{98},\tfrac{1}{2}\right]$, the optimal static  strategy of Player~2 is $[p(L),1-2p(M),p(R)]$ where $p=\tfrac{1-2c}{192c}$, keeping Player~1 indifferent between his two actions.
In this case, $\tilde{v}(c)=1-\tfrac{(1-2c)^2}{192c}$, which is not linear in $c$.
\hfill$\Diamond$
\end{example}

\thref{cor:optimalstationaryispiecewise} suggests that the optimal stationary strategy in the first segment (which includes $c=0$) is optimal even without switching costs, so it must be comprised of optimal strategies in game $A$.
If there is only one optimal strategy in the game (as in Example \ref{ex_123}), then the optimal stationary strategy is actually static  and in that segment, $v(c)=\tilde{v}(c)$.
However, in general, this may not be the case.
The following example shows that it is possible that stationary strategies will always outperform static strategies (except, of course, at $c=0$).

\begin{example}\textbf{A game with $\tilde{v}(c)>v(c)$ for all $c>0$.} \rm
\thlabel{ex_vcneqtildevc_cyclic}

Suppose $A=\left(\begin{smallmatrix} 1 & 0 & 1 & 0\\ 0 & 1 & 0 & 1 \end{smallmatrix}\right)$
and $S=\left(\begin{smallmatrix} 0 & 0 & 1 & 1\\ 1 & 0 & 0 & 1 \\ 1 & 1 & 0 & 0 \\ 0 & 1 & 1 & 0\end{smallmatrix}\right)$.
The value of $A$ is $0.5$ and is achieved by any mixed action of Player~2  choosing the set of even columns w.p. $0.5$.
Switching-cost matrix $S$ is such that, based on the action of Player~2 in the previous time period, the only costless actions are the same action and the action to the right of the previous action.
Due to this cyclical nature of $S$, Player~2 can achieve the value of game $A$ with a stationary strategy, by playing the previous action w.p. $0.5$ and the action to the right w.p. $0.5$.
Clearly, any static  non-pure strategy will contain a $c$ component, and any pure static strategy  will result in $\tilde{v}(c)=1$.
To conclude, $\tilde{v}(c)>v(c)=0.5$ for all $c>0$.
\hfill$\Diamond$
\end{example}

\subsection{A Uniform Bound on the Value Differences in Static  and Stationary Strategies}\label{s_boundmain}

Our goal is to compare $v(c)$ and $\tilde{v}(c)$ and evaluate any loss $\tilde{v}(c)-v(c)$ due to using a static  strategy instead of a stationary strategy.
Our first bound is very general and does not rely heavily on game-specific assumptions (\thref{thm:uniformloss}).
Then we improve the bound by taking into account additional properties of the game, such as the multiplicity of equilibria in $A$ and the structure of $S$ (\thref{obs_symS}).
We start by showing that in the simplest set of games, $2\times 2$ games, the optimal stationary strategy is always a static  strategy (\thref{prop_2x2}).
This is a generalization of Theorems 4.1, 4.2 and Corollary 4.1 in \cite{schoenmakers2008repeated}.

\begin{proposition}\thlabel{prop_2x2}
Let $A$ be a $2\times 2$ zero-sum game.
For every $S$ and every $c$, there exists an optimal stationary strategy in switching-costs game $(A,S,c)$ that is a static strategy.
Thus, $\tilde{v}(c)=v(c)$.
\end{proposition}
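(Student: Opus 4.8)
The plan is to prove the two inequalities $v(c)\le\tilde v(c)$ and $v(c)\ge\tilde v(c)$ separately; the first is immediate since every static strategy is stationary, so all the work is in $v(c)\ge\tilde v(c)$. I would set this up through occupation measures. Model Player~2's long‑run behaviour by the frequencies $z_{kj}$ of consecutive pairs (previous action $k$, current action $j$). For $n=2$ the flow‑balance constraint forces the two crossing frequencies to coincide, $z_{12}=z_{21}=:w$, so the feasible set is the triangle $T=\{z_{11},z_{22},w\ge 0,\ z_{11}+z_{22}+2w=1\}$. Since Player~2 alone controls transitions, Player~1 best‑responds state by state, and by the single‑controller existence result used in \thref{lem:proportiesofvc} one gets $v(c)=\min_{z\in T}F(z)$ with
\[
F(z)=\max_i\big(z_{11}a_{i1}+w\,a_{i2}\big)+\max_i\big(w\,a_{i1}+z_{22}a_{i2}\big)+\kappa w,\qquad \kappa=c(s_{12}+s_{21}).
\]
Note that only the sum $s_{12}+s_{21}$ enters, so the general $2\times2$ switching matrix behaves like a symmetric one. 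The function $F$ is convex and piece‑wise linear, being linear on each region where Player~1's best‑response rows are fixed. A short computation shows that static strategies correspond exactly to the independence relation $z_{11}z_{22}=w^2$, which cuts out a concave arc $\mathcal C\subset T$ joining the two pure‑column vertices, and that $\tilde v(c)=\min_{\mathcal C}F$. The goal is thus to show the unconstrained minimiser of $F$ over $T$ can be taken on $\mathcal C$.

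For the ``high‑switching'' half I would use convexity of $M(t)=\max_i\big(t\,a_{i1}+(1-t)a_{i2}\big)$. Writing $r=z_{11}+w$ for the marginal frequency of column~$1$ and $p,q$ for the column‑$1$ probabilities used in the two states, the balance equation gives $rp+(1-r)q=r$, so the first two terms of $F$ equal $rM(p)+(1-r)M(q)\ge M(r)$ by Jensen, whence $F(z)\ge M(r)+\kappa w$. A direct identity shows $w-r(1-r)=w^2-z_{11}z_{22}$, so on the side $w\ge r(1-r)$ (equivalently $z\notin$ the region below $\mathcal C$) this bound yields $F(z)\ge M(r)+\kappa r(1-r)=\tilde g(r)\ge\tilde v(c)$, where $\tilde g(s)=M(s)+\kappa s(1-s)$ is the static payoff. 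Hence every point at or above $\mathcal C$ already satisfies $F\ge\tilde v(c)$ in one line.

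The remaining ``low‑switching'' region $R=\{z_{11}z_{22}\ge w^2\}\cap T$ (which is convex, as $\sqrt{z_{11}z_{22}}$ is concave) I would treat using the piece‑wise‑linear structure rather than the crude $M(r)$ bound. On the interior of any best‑response region $F$ is linear, so no strict interior minimiser exists there and one may push to the boundary of that region. The only kinks are Player~1's two indifference lines $z_{11}=\rho w$ and $z_{22}=w/\rho$ with $\rho=t^*/(1-t^*)$, where $t^*\in(0,1)$ is Player~2's optimal mix in $A$ (interior by the standing assumption that the value of $A$ is not attained in pure actions). These lines meet precisely at the point $z^\circ\in\mathcal C$ at which Player~2 plays the $A$‑optimal mix in both states, and each kink segment lying in $R$ runs between $z^\circ$ and one pure vertex, all on $\mathcal C$; finally the face $w=0$ carries the linear function $z_{11}\max_i a_{i1}+z_{22}\max_i a_{i2}$, minimised at a pure vertex. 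Thus the minimum over $R$ is attained on $\mathcal C$, and combining the two halves gives $\min_T F=\min_{\mathcal C}F=\tilde v(c)$, i.e. $v(c)=\tilde v(c)$ with an optimal stationary strategy that is static.

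The main obstacle is exactly this low‑switching case. The clean convexity argument only controls strategies that switch at least as often as the independent benchmark; when Player~2 is persistent, the crude inequality $rM(p)+(1-r)M(q)\ge M(r)$ is too lossy, because persistence is precisely what lets Player~1 exploit each state separately. I would expect some effort to verify that there is no shortcut via symmetry: both time reversal and a column relabelling preserve the sign of $p-q$, so neither reduces the ``below $\mathcal C$'' case to the ``above'' case, which is why the explicit region analysis anchored at $z^\circ$ seems unavoidable. A secondary point to check carefully is the degenerate boundary behaviour of $F$ on the absorbing face $w=0$ (where the induced chain has two recurrent classes), ensuring its value is the stated linear expression and is dominated by the pure vertices.
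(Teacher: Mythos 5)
Your proof is correct, but it takes a genuinely different route from the paper's. The paper argues from the finite candidate set supplied by \thref{lem:proportiesofvc} and the ACOE: because switching costs and continuation payoffs enter every entry of a given column of a state-game as the same constant, in each state an optimal Player~2 either plays purely or plays the unique mixed optimal action $p$ of $A$; this leaves five candidate stationary strategies (Table~\ref{tbl_for2x2games}), three of which are static, and the two genuinely history-dependent ones (pure action in one state, the mix $p$ in the other) are ruled out by a dominance argument on continuation payoffs that ultimately contradicts $s_{12}+s_{21}>0$. You instead linearize over occupation measures: flow balance for $n=2$ forces $z_{12}=z_{21}=w$, the value becomes the minimum of the convex piecewise-linear $F$ over the triangle $T$, static strategies are exactly the arc $z_{11}z_{22}=w^2$, and the minimum over $T$ is pushed onto that arc -- by Jensen's inequality applied to $M$ on the high-switching side, and by tracking the two indifference (kink) segments of $F$ on the low-switching side. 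Both proofs expose the same structural facts, namely that only the sum $s_{12}+s_{21}$ matters in the $2\times 2$ case and that the only candidates for optimality are the pure minimax actions and the static $A$-optimal mix; your argument is more geometric and yields a quantitative by-product (the bound $F\ge M(r)+\kappa w$ for high-switching behaviour), while the paper's is shorter given that the machinery of \thref{lem:proportiesofvc} is already in place. Two points in your write-up deserve slightly more care. First, the identity $v(c)=\min_{z\in T}F(z)$ silently includes the reducible face $w=0$, where the induced chain has two recurrent classes and the occupation measure depends on the initial state; this is harmless because $F$ is linear on that face and its vertex values are achievable (the paper's footnote on the irrelevance of the initial state covers this), but it should be said. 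Second, the phrase that the kink segments in $R$ run "all on $\mathcal C$" should be stated as: the \emph{endpoints} of each kink segment (a pure vertex and $z^\circ$) lie on $\mathcal C$, the segments themselves being chords; this suffices precisely because $F$ is linear along each such segment, so its minimum there is attained at an endpoint.
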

\begin{proof}
Let $A=\left(\begin{smallmatrix} \alpha & \beta \\ \gamma & \delta \end{smallmatrix}\right)$ be a zero-sum $2\times 2$ game and $S=\left(\begin{smallmatrix} 0 & s_{12} \\ s_{21} & 0 \end{smallmatrix}\right)$ the switching-costs matrix for Player~2.
We assume that the value in $A$ without switching costs cannot be obtained using pure strategies (otherwise the discussion is trivial), so w.l.o.g., $\alpha>\beta$, $\alpha >\gamma$, $\delta>\beta$, and $\delta>\gamma$.
Let $p\in (0,1)$ be the probability of playing the left column (action $L$) in $A$ that achieves the value (a direct computation shows that only one such $p$ exists).
As in the examples, we denote by $s_i$ the new state when the previous action of Player~2 was $i\in\{L,R\}$.

Based on \thref{lem:proportiesofvc} and the ACOE method (Appendix \ref{app_acoe}), the optimal stationary strategy for Player~2 is such that in each state Player~1 is either indifferent between his two actions or plays purely one of them.
Since the same switching costs and continuation payoffs are added to all entries of the same column of each $s_i$ (see Eq.~\eqref{eq:two_one_shot_for2x2games}), they do not affect Player~1's choice of rows.
Thus, in each state either Player~2 plays purely too or he plays the mixed action $p$.
All the possible combinations of these actions are summarized in Table \ref{tbl_for2x2games}.
For example, in case I he plays $p$ in both states, whereas in case IV he plays $p$ only at state $s_R$, while after playing $L$ (and reaching $s_L$) he always plays purely action $R$.
In cases II and III he is absorbed in states $s_L$ or $s_R$ (resp.), so the action in the other state is irrelevant.

\begin{table}[]
\centering
\begin{tabular}{l|l|l|}
\textnumero    & $s_L$ & $s_R$ \\ \hline
I   & $p$    & $p$    \\ \hline
II  & $L$    & $?$    \\ \hline
III & $?$    & $R$    \\ \hline
IV  & $R$    & $p$	  \\ \hline
V	& $p$	   & $L$  \\ \hline
\end{tabular}
\caption{All the possible optimal stationary strategies.}\label{tbl_for2x2games}
\end{table}

Note that cases I, II, and III represent a static  strategy, so to show that $v(c)=\tilde{v}(c)$ we must show that the strategies presented in cases IV and V cannot be optimal.
Assume by contradiction that the strategy presented in case IV is indeed optimal.
Let $\kappa$ be the continuation payoff after playing $R$ (the continuation payoff after playing $L$ is normalized to $0$).
When we add the switching costs and the continuation payoffs to the games, we obtain two one-shot games:
\begin{equation}\label{eq:two_one_shot_for2x2games}
s_L: \left(\begin{smallmatrix} \alpha & \beta+\kappa+cs_{12} \\ \gamma & \delta+\kappa+cs_{12} \end{smallmatrix}\right) \qquad s_R: \left(\begin{smallmatrix} \alpha+cs_{21} & \beta+\kappa \\ \gamma+cs_{21} & \delta+\kappa \end{smallmatrix}\right) \nonumber
\end{equation}

Let us consider state $s_L$. 
By assumption on the coefficients, $\alpha > \gamma$ and $\delta+\kappa+cs > \beta +\kappa+cs$. Since it is optimal to play purely $R$, we necessarily have
\[
\alpha \geq \beta+\kappa+cs_{12}  \text{ or } \gamma \geq  \delta+\kappa+cs_{12}.
\]
If only the first equation is true, the equilibrium is mixed and $R$ is not optimal.
If only the second equation is true, then there is a contradiction:
\[
\alpha> \gamma \geq \delta+\kappa+cs >\beta+\kappa+cs >\alpha.
\]
It follows that both equations are true, i.e. the right column dominates the left including the continuation payoffs (this is only true in $2\times 2$ games) and in particular $\gamma\geq \delta + cs_{12} + \kappa$.

In state $s_R$, however, the optimal strategy is mixed and therefore such dominance is not possible.
If the direction of the  original inequalities remains correct, then $\delta + \kappa \geq \gamma + cs_{21}$.
It follows that $s_{12}+s_{21}\leq 0$, which is a contradiction.
The other option is that, since in $s_R$ both inequalities changed direction, a contradiction can be constructed using the first row.

A similar approach can be used for case V and the proof is complete.\hfill\end{proof}

Thus, in the $2\times 2$ case, there are two candidates for optimal stationary strategy: either playing the optimal strategy of $A$ regardless of the previous action, or playing the pure minimax action of $A$.
Both are also static  strategies.
It is straightforward to calculate the value in both cases and to determine the cut-off value of $c$ above which playing the pure minimax becomes profitable.

When either of the players has more than $2$ actions in $A$, the value may differ under static  and stationary strategies.
The following Proposition bounds this difference.
Naturally, since this Proposition is general, the resulting bound is not easy to calculate or interpret.
However, in the Theorems that follow, we add more assumptions to obtain a bound in a simple analytic manner.

\begin{proposition}\thlabel{thm:uniformloss}
Player~2's loss from limiting himself to static  strategies, $\tilde{v}(c)-v(c)$, is bounded by
\begin{equation}\label{eq:thm:loss_formula}
\Delta(c) = \tfrac{1}{2}(1-v(c))+\tfrac{c}{4} \Xi+\tfrac{c}{4}M,
\end{equation}
where 
\begin{itemize}
\item $M=\max\limits_{y\in \Delta(\{1,...,n\})} y^{T} S y$  is the maximal switching cost that Player~2 can pay,
\item $\Xi=\max\limits_{j,k} |S_{jk}-S_{kj}|$ is a measure of the asymmetry of the switching-cost matrix.
\end{itemize}
\end{proposition}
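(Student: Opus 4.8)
The plan is to reduce the whole comparison to two explicit quantities attached to an optimal stationary strategy, and then to test Player~2's static minimax against a single, cleverly chosen static strategy built from them. The key preliminary observation is that Player~1 controls neither the transitions nor the switching cost. Against any fixed stationary strategy of Player~2 -- described by its invariant distribution $\pi$ over states and the conditional mixed actions $(y^{(k)})_k$ used in each state $k$ -- Player~1's choice in state $k$ affects only the current stage payoff $(Ay^{(k)})_i$ and nothing in the future, so he best responds myopically. Hence the long-run payoff of the optimal stationary strategy splits as
\[
v(c)=G+cW,\qquad G=\sum_k \pi_k\,\max_i (Ay^{(k)})_i,\qquad W=\sum_{k,j}\pi_k\,y^{(k)}_j\,s_{kj}.
\]
Here $G$ is the game part and $W\ge 0$ the long-run (directed) switching cost. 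I would immediately record two facts: $\max_i (A\pi)_i\le G$, since the maximum of the $\pi$-average of the rows is at most the $\pi$-average of the row maxima; and $G\le v(c)$, because $W\ge 0$.

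For the upper bound on $\tilde v(c)$ I would use the static strategy $\bar y=\tfrac12\pi+\tfrac12 e_{k_0}$, where $e_{k_0}$ is a well-chosen pure action (e.g.\ the pure minimax action of $A$, or the one minimizing $(S^{\mathrm{sym}}\pi)_{k_0}$ with $S^{\mathrm{sym}}=\tfrac12(S+S^{\top})$). Since $\tilde v(c)\le \max_i(A\bar y)_i+c\,\bar y^{\top}S\bar y$, the game part is controlled by convexity of $\max$ together with the two facts above and the normalization $0\le a_{ij}\le 1$:
\[
\max_i (A\bar y)_i\le \tfrac12\max_i(A\pi)_i+\tfrac12\max_i(Ae_{k_0})_i\le \tfrac12 G+\tfrac12\le \tfrac12\bigl(1+v(c)\bigr).
\]
After subtracting $v(c)=G+cW$, this contributes $\tfrac12(1-v(c))+\tfrac{c}{2}W$, which is exactly the first term of $\Delta(c)$ (recall $\Delta(c)+v(c)=\tfrac12(1+v(c))+\tfrac c4(M+\Xi)$), plus a slack $\tfrac{c}{2}W$ that I will spend on the switching term.

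The switching part is where the factor $\tfrac14$ and the constants $M,\Xi$ appear. Expanding the quadratic form and using $e_{k_0}^{\top}Se_{k_0}=s_{k_0k_0}=0$ gives
\[
\bar y^{\top}S\bar y=\tfrac14\,\pi^{\top}S\pi+\tfrac14\bigl(\pi^{\top}Se_{k_0}+e_{k_0}^{\top}S\pi\bigr)=\tfrac14\,\pi^{\top}S\pi+\tfrac12\bigl(S^{\mathrm{sym}}\pi\bigr)_{k_0}.
\]
Bounding $\pi^{\top}S\pi\le \max_y y^{\top}Sy=M$ handles the first summand, yielding $\tfrac c4 M$. Assembling everything, $\tilde v(c)-v(c)\le \tfrac12(1-v(c))+\tfrac c4 M+\tfrac{c}{2}\bigl[(S^{\mathrm{sym}}\pi)_{k_0}-W\bigr]$, so it remains to absorb the cross term against the slack $W$ and the asymmetry of $S$.

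The hard part will be precisely this last step: showing that the optimal stationary strategy can be taken, and $k_0$ chosen, so that
\[
\min_{k_0}\bigl(S^{\mathrm{sym}}\pi\bigr)_{k_0}\le W+\tfrac12\Xi,
\]
which then delivers $\tfrac{c}{2}\bigl[(S^{\mathrm{sym}}\pi)_{k_0}-W\bigr]\le \tfrac c4\Xi$ and completes the bound. A clean sufficient inequality is $\pi^{\top}S\pi\le W+\tfrac12\Xi$, obtained by writing $S=S^{\mathrm{sym}}+\tfrac12(S-S^{\top})$ and noting that the antisymmetric part has entries bounded by $\tfrac12\Xi$, so that replacing the directed cost $W$ by its symmetrization costs at most $\tfrac12\Xi$. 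This inequality \emph{cannot} hold for an arbitrary stationary strategy (a ``mostly stay'' strategy has small $W$ but large $\pi^{\top}S\pi$); it must use optimality of $(\pi,(y^{(k)}))$, which forbids reducing $W$ without degrading $G$, and it is the only place where the normalizations of $S$ (through $M,\Xi$) and of $A$ (through the $[0,1]$ range) genuinely interact. The game-part estimate and the $\tfrac c4 M$ bound are routine by comparison, so I expect essentially all of the difficulty to sit in this switching-cost inequality.
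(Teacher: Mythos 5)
Your preliminary reductions are correct: against a stationary strategy of Player~2, Player~1 indeed best responds myopically, so $v(c)=G+cW$, and the estimates $\max_i(A\pi)_i\le G\le v(c)$ are valid. The fatal problem is the step you yourself flag as ``the hard part'': the inequality $\min_{k_0}\bigl(S^{\mathrm{sym}}\pi\bigr)_{k_0}\le W+\tfrac12\Xi$ is not just unproven, it is \emph{false}, and no choice of $k_0$ (nor of the optimal stationary strategy) can repair it. Take the cyclic game of Example~\ref{ex_vcneqtildevc_cyclic} extended to six columns: $A=\left(\begin{smallmatrix}1&0&1&0&1&0\\0&1&0&1&0&1\end{smallmatrix}\right)$, $s_{ij}=0$ if $j\in\{i,i+1\}$ (cyclically) and $s_{ij}=1$ otherwise. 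The unique optimal stationary strategy is ``stay w.p.\ $\tfrac12$, move right w.p.\ $\tfrac12$'', giving $v(c)=\tfrac12$, $W=0$, uniform invariant distribution $\pi$, and $\Xi=1$. Here $\bigl(S^{\mathrm{sym}}\pi\bigr)_{k_0}=1-\tfrac26=\tfrac23$ for \emph{every} $k_0$, so $\min_{k_0}\bigl(S^{\mathrm{sym}}\pi\bigr)_{k_0}=\tfrac23>\tfrac12=W+\tfrac12\Xi$. Your assembled bound then reads $\tfrac14+\tfrac c2$, while $\Delta(c)=\tfrac14+\tfrac c4+\tfrac c4 M=\tfrac14+\tfrac{5c}{12}$ (one computes $M=\tfrac23$ here), so your static strategy $\bar y=\tfrac12\pi+\tfrac12 e_{k_0}$ genuinely fails to certify the proposition. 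The conceptual reason is that the invariant distribution erases the directional information in $S$: an optimal strategy can cycle along free edges so that its dynamic cost $W$ is zero, while the static cost of playing $\pi$, and even of playing $\pi$ against a single anchor state, exceeds $W$ by much more than $\tfrac12\Xi$.

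The paper's proof keeps the directionality by never using the invariant distribution. It picks a single state $j$ and a one-shot mixed action $y$ such that the auxiliary one-shot payoff \emph{at} $j$, namely $x^{T}Ay+c\,j^{T}Sy$ (stage payoff plus switching cost \emph{out of} $j$), is at most $v(c)$ for every $x$; such a pair exists because otherwise Player~1 could guarantee strictly more than $v(c)$ in every state. The static strategy is then $\tilde y=\tfrac12\delta_j+\tfrac12 y$: in the expansion of $g(c)(x,\tilde y)$, the term $\tfrac12 x^{T}Ay+\tfrac c2\, j^{T}Sy$ is absorbed into $\tfrac12 v(c)$ by the one-shot bound, the reverse cross term is controlled by $y^{T}Sj\le j^{T}Sy+\Xi$ (the only place $\Xi$ enters), and $\tfrac c4\,y^{T}Sy\le\tfrac c4 M$, yielding exactly $\Delta(c)$. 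In your counterexample this recipe chooses, say, $j=1$ and $y=(\tfrac12,\tfrac12,0,\ldots,0)$, whose switching cost out of state $1$ is zero --- precisely the state-anchored, directed object that your $\pi$-based construction loses. Salvaging your decomposition would require replacing $\pi$ by such a state-anchored action, at which point you have reproduced the paper's argument.
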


\begin{proof}
By definition of value $v(c)$, there exists a state $j\in S$ and a mixed action $y\in \Delta (J)$ such that for all $i \in I$ we have that the cost at $j$ by playing $y$ is less than $v(c)$ (otherwise, the payoff of Player~1 in each state can exceed $v(c)$ and thus can be the total value).

Consider the static  strategy $\widetilde{y}$ of Player~2, that with probability $0.5$ chooses action $j$ and otherwise chooses an action according to mixed action $y$. We know that if Player~2 plays a static  strategy then Player~1 has a static  strategy best response. 
Let $x$ be this best response.
Then, replacing $\tilde{y}$ by its definition, we have
\begin{align}
g(c)(x,\tilde{y})&=x^{T} A \tilde{y}+ c \tilde{y}^{T} S \tilde{y} \nonumber\\
& = \tfrac{1}{2} x^{T} A j + \tfrac{1}{2} x^{T} A y+ c \tfrac{1}{4} j^TS j+c \tfrac{1}{4} j^TS y+c \tfrac{1}{4} y^{T}S j  +c \tfrac{1}{4} y^{T}S y . \nonumber
\end{align}
Bounding $y^{T}S j$ from above with $jSy+\Xi$ and reorganizing leads to
\begin{align}
g(c)(x,\tilde{y}) &\leq \tfrac{1}{2} \left( x^{T} A y+ c j S y \right)+ c \tfrac{1}{4} \Xi+ \tfrac{1}{2} x^{T} A j+ c\tfrac{1}{4} j^TS j+c \tfrac{1}{4} y^{T}S y . \nonumber
\end{align}
We can now replace the first part of the bound by $v(c)$.
Recall that $A$ is normalized so that $\max\limits_{i,j} a_{ij}=1$ and $S$ defined such that $s_{ii}=0$ for every $i\in \{1,\ldots,n\}$ so:
\begin{align}
g(c)(x,\tilde{y}) &  \leq \tfrac{1}{2}v(c)+ \tfrac{c}{4} \Xi+ \tfrac{1}{2}+ \tfrac{c}{4} M. \nonumber
\end{align}
It follows from the definition of $\tilde{v}(c)$ that
\begin{equation}
\tilde{v}(c) \leq \tfrac{1}{2} v(c)+ \tfrac{c}{4} \Xi+\tfrac{1}{2}+\tfrac{c}{4}M. \nonumber
\end{equation}
The bound is obtained by subtracting $v(c)$ from both sides:
\begin{equation}
\tilde{v}(c) -v(c) \leq \tfrac{1}{2}(1-v(c))+\tfrac{c}{4} \Xi+\tfrac{c}{4}M=\Delta(c). \nonumber
\end{equation}
\hfill\end{proof}

In \thref{thm:uniformloss}, the bound depends on $\Xi$, which measures the symmetry of the switching-cost matrix.
Naturally, the symmetry of $S$ has no effect on the minimax value in static  strategies.
Recall that $g(c)(x,y)$ from Eq.~\eqref{eq_g(c)} expressed the expected payoff of Player~2 when the players play a static  strategy pair $(x,y)$.
When the expression for $g(c)(x,y)$ is expanded, each product $y_i y_j$ is multiplied by $(s_{ij}+s_{ji})$.
Hence, when solving for $\tilde{v}(c)$, only the sum of symmetric off-diagonal elements of $S$ matters, not their exact values.
This, for example, is the reason why \cite{GDO2020paper} assumed that $S$ is a symmetric matrix.

A stationary strategy, on the other hand, can take any asymmetry into account, yielding different results.
This can easily be seen in Example \ref{ex_vcneqtildevc_cyclic}.
If we change $S$ into a symmetric matrix while keeping the sums $s_{ij}+s_{ji}$ at their current values, the result will be a matrix where every mixed stationary strategy has a $c$-component, whereas in the non-symmetric case, the optimal stationary strategy kept $v(c)=0.5$ regardless of $c$.

\begin{theorem}\thlabel{obs_symS}
If matrix $S$ is symmetric, Player~2's loss from limiting himself to static  strategies, $\tilde{v}(c)-v(c)$, is bounded by
$\Delta(c) =\tfrac{1}{2}(1-v(c))+\frac{c}{4}M$
\emph{(}where $M$ is defined in \thref{thm:uniformloss}\emph{)}.
This formula implies the following bound
\begin{equation}\label{eq:SymCase:morethanhalf}
\tfrac{1+cM-\tilde{v}(c)}{1+cM-v(c)} \geq \frac{1}{2}.
\end{equation}
\end{theorem}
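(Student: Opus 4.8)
The plan is to obtain both assertions directly from the general estimate already proved in \thref{thm:uniformloss}, so essentially no new game-theoretic work is needed. For the first assertion, note that the \emph{only} place where the asymmetry of $S$ enters the bound $\Delta(c)=\tfrac{1}{2}(1-v(c))+\tfrac{c}{4}\Xi+\tfrac{c}{4}M$ is through the term $\tfrac{c}{4}\Xi$ with $\Xi=\max_{j,k}|S_{jk}-S_{kj}|$. When $S$ is symmetric we have $S_{jk}=S_{kj}$ for every $j,k$, hence $\Xi=0$, and the bound collapses to $\Delta(c)=\tfrac{1}{2}(1-v(c))+\tfrac{c}{4}M$. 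Equivalently, one may inspect the proof of \thref{thm:uniformloss}: the only inequality invoking $\Xi$ is the bound on the cross term $y^{T}Sj$ by $j^{T}Sy+\Xi$, and for symmetric $S$ the two cross terms $y^{T}Sj$ and $j^{T}Sy$ are literally equal, so that step becomes an equality and the $\Xi$-contribution vanishes.

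For the second assertion I would first record the elementary bounds that make the displayed ratio meaningful. Since $A$ is normalized with entries in $[0,1]$ we have $\max_x x^{T}Ay\le 1$ for every $y$, and $y^{T}Sy\le M$ by definition of $M$; therefore, for the minimizing $y$ in the definition of $\tilde v(c)$, $\tilde v(c)\le 1+cM$, and consequently $v(c)\le\tilde v(c)\le 1+cM$. In particular the denominator $1+cM-v(c)$ is nonnegative, and it is strictly positive except in the degenerate case $v(c)=\tilde v(c)=1+cM$, in which the claimed inequality is trivial.

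Assuming $1+cM-v(c)>0$, the inequality $\tfrac{1+cM-\tilde v(c)}{1+cM-v(c)}\ge\tfrac12$ is equivalent, after clearing the denominator, to $2\bigl(1+cM-\tilde v(c)\bigr)\ge 1+cM-v(c)$, that is, to $\tilde v(c)\le\tfrac12\bigl(1+cM+v(c)\bigr)$. Adding $v(c)$ to the bound from the first assertion yields $\tilde v(c)\le v(c)+\tfrac12(1-v(c))+\tfrac{c}{4}M=\tfrac12+\tfrac12 v(c)+\tfrac{c}{4}M$, and since $\tfrac{c}{4}M\le\tfrac{c}{2}M$ this is at most $\tfrac12+\tfrac12 v(c)+\tfrac12 cM=\tfrac12\bigl(1+cM+v(c)\bigr)$, which is exactly the required inequality.

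There is no substantive obstacle here: the content is the observation $\Xi=0$ together with a one-line rearrangement and the trivial estimate $\tfrac{c}{4}M\le\tfrac{c}{2}M$. The single point that warrants care is the sign of the denominator $1+cM-v(c)$, since multiplying the target inequality by a negative quantity would reverse it; I would therefore make the normalization bound $v(c)\le 1+cM$ explicit and dispatch the boundary case $1+cM-v(c)=0$ separately before clearing denominators.
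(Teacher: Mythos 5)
Your proposal is correct and follows essentially the same route as the paper's proof: the first bound is obtained by setting $\Xi=0$ in \thref{thm:uniformloss}, and the ratio bound follows from the same relaxation $\tfrac{c}{4}M\le\tfrac{c}{2}M$ together with a rearrangement (the paper proves the equivalent additive form $1+cM-\tilde v(c)\ge\tfrac12\bigl(1+cM-v(c)\bigr)$, which is what your cross-multiplied inequality states). Your explicit verification that $v(c)\le\tilde v(c)\le 1+cM$, so that the denominator is nonnegative, is a small point of added care that the paper leaves implicit, but it does not change the substance of the argument.
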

\begin{proof}
The first bound is a direct corollary from \thref{thm:uniformloss}, setting $\Xi=0$. The second part is obtained by relaxing the bound and reorganizing. By \thref{thm:uniformloss}, we obtain
\begin{align*}
\tilde{v}(c) -v(c) \leq \tfrac{1}{2}(1-v(c))+\tfrac{c}{4}M \leq \tfrac{1}{2}(1+cM-v(c)).
\end{align*}
We can deduce that
\begin{align*}
1+cM-\tilde{v}(c)& =1+cM-v(c)+(v(c)-\tilde{v}(c)),\\
&  \geq (1+cM-v(c))- \tfrac{1}{2}(1+cM-v(c)),\\
&= \tfrac{1}{2}(1+cM-v(c)).
\end{align*}
\hfill\end{proof}

The reorganization of the bound to the fraction form presented in Eq.~\eqref{eq:SymCase:morethanhalf} has an intuitive interpretation: by using static  strategies, Player~2 loses at most half of what he could have obtained with a stationary strategy.
Had we done the analysis from the point of view of the maximizing player, this Equation and the intuition would be further simplified into: ``with static  strategies the player guarantees at least half of the value from stationary ones''. 
This sets the ``price of being static '' at $\tfrac{1}{2}$ when $S$ is symmetric.
Such a calculation is possible in other settings too, such as the ones from  \thref{prop:genquart}, where the ``price of being static '' is $\tfrac{1}{4}$.

It is important to note, however, that this approach is not always possible.
In the following example, when $S$ is asymmetric the stationary strategy yields a worse result than when $S$ is symmetric, and in an extreme case -- the optimal stationary strategy is actually static.

\begin{example} \textbf{Rock-Paper-Scissors (RPS) with non-symmetric $S$.} \rm \thlabel{ex_RPS}

Consider the RPS game with $A=\left(\begin{smallmatrix} 0 & 1 & -1\\ -1 & 0 & 1 \\ 1 & -1 & 0\end{smallmatrix}\right)$ and $S=\left(\begin{smallmatrix} 0 & 1+\D & 1+\D\\ 1-\D & 0 & 1+\D \\ 1-\D & 1-\D & 0 \end{smallmatrix}\right)$, where $-1\leq\D\leq 1$.
The static  solution remains the same regardless of $\D$, and dictates choosing all actions with equal probability if $c\leq 1.5$ and purely playing one of the actions for larger $c$s.

The stationary strategy for $\D=0$ is to choose all actions with equal probability for $c\leq 1$ and to play purely for $c\geq 2$.
For $c\in[1,2]$, the strategy involves cyclic behavior: after $L$ never play $R$, after $M$ never play $L$, and after $R$ never play $M$.
In all cases, the previous action is replayed w.p. $\tfrac{2}{3}$.

For $\D=\tfrac{1}{3}$, the optimal strategy is $\left(\tfrac{1}{3},\tfrac{1}{3},\tfrac{1}{3}\right)$ for $c\leq \tfrac{9}{8}=1.125$.
For a larger $c$, one action is never played in each state, as discussed in the $\D=0$ case.
This is true until $c=\tfrac{9}{5}$.
From then on, the optimal strategy is to play purely.

For $\D=1$, the optimal stationary strategy is in fact the static  one, and the two cases coincide.
All the cases are shown in Figure \ref{Fig:nonSymRPS}.
\begin{figure}
\begin{center}
\begin{tikzpicture}

\begin{axis}[axis x line=middle, axis y line=middle, xmax=2.5, ymin=0, ymax=1.1, xlabel={$c$}, ylabel={$\gamma$},  x label style={at={(axis description cs:1,0)},anchor=north},    y label style={at={(axis description cs:0,1)},anchor=south},  xtick={1, 1.5, 2}, xticklabels={$c_0$,$c^*$,$c_p$}, ytick={0, 1}, yticklabels={$v$, $\bar{v}$}
]
	\addplot[mark=*, only marks] coordinates {(0,0) (1,2/3) (2,1)}; %dots
	\addplot[thick] coordinates{(0,0) (1,2/3) (2,1) (2.5,1)};%line

	\addplot[dashed] coordinates{(1,2/3) (1.5,1) (2,1)};%line
		
	\addplot[dotted] coordinates{(9/8,3/4) (9/5,1)};
%	\addplot[dotted] coordinates{(0,1/3) (2,11/9)};%line

\end{axis}
\end{tikzpicture}
\caption{Different value functions for non-symmetric RPS.
Solid: $\D=0$. Dashed: $\D=1$, static  strategy. Dotted: $\D=\tfrac{1}{3}$.}\label{Fig:nonSymRPS}
\end{center}
\end{figure}
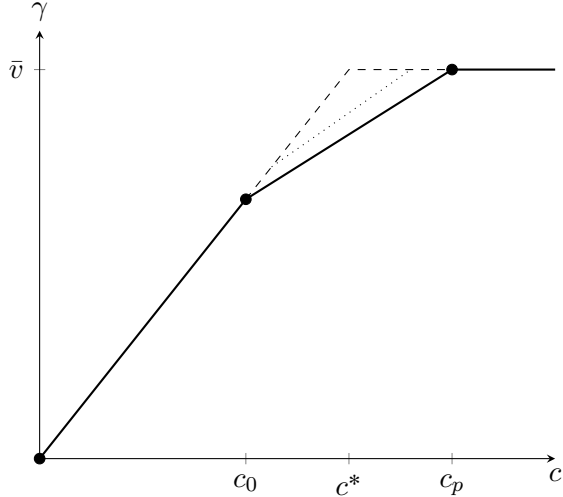
\hfill$\Diamond$
\end{example}

The bound from \thref{thm:uniformloss} is a general bound without making any assumptions on game $A$ or matrix $S$.
As shown in \thref{obs_symS}, tighter bounds can be tailored  to special cases.

In Subsection \ref{s_smallc} we show that in certain cases, when $c$ is either small or large,  the optimal strategies are static. 
This implies that the two value functions coincide on these segments and the bound is $0$.
	
In Subsection \ref{s_constantsc}, we make further assumptions on the matrix $S$.
While the diagonal elements in $S$ are all zeros, the off-diagonal elements are ones.
This means that all switches are equally costly, as suggested by \cite{lipman2000switching,lipman2009switching}.
In this case, not only $v(c)$ but also $\tilde{v}(c)$ is piece-wise linear, which strengthens the robustness result and simplifies the process of finding the optimal static  strategy.
The search domain narrows down to a finite (although large) predetermined set of mixed actions.

\subsection{Optimality of a static  strategy for Small and Large Values of $c$}\label{s_smallc}

In this section we tackle two possible assumptions affecting the two extremes of $c$.
We first show how to identify the case where, for small values of $c$, switching costs do not play a role and the optimal stationary strategy is a static  one.
We next assume that there are no free switches, i.e., that all non-diagonal elements of $S$ are non-zero.
In this case, again for large enough $c$, the optimal stationary strategy is a static  one, and moreover, pure.

We start with the small $c$ case.
In most models, switching costs are either ignored or considered irrelevant, so comparing a case with no switching costs ($c=0$) to one with small switching costs is important as a robustness check on this claim.
\cite{lipman2009switching} provide one such comparison in discussing the continuity of equilibrium payoffs and equilibrium strategies when switching costs converge to zero.
Here, we provide another comparison: between the optimal stationary and static  strategies when $c$ is very small.
We have already established that in some cases (Example \ref{ex_123}) small switching costs do not matter and the optimal stationary strategy is also a static  one, while in other cases (Example \ref{ex_vcneqtildevc_cyclic}) there is a difference between the two for every $c>0$.
Hence, we are interested in finding conditions enabling us to distinguish between the two types of cases.

Clearly, for $c=0$, value $v(0)$ and the minimax value in static  strategies $\tilde{v}(0)$ coincide.
Let $\ubar{c}$ be the maximal $c$ such that for every $c\in[0,\ubar{c}]$ the value is attained by a static  strategy, so $v$ and $\tilde{v}$ still coincide.
%In the general case, $\ubar{c}=0$, as exemplified in Example \ref{ex_vcneqtildevc_cyclic}.
%If $\ubar{c}>0$, then it can be found in polynomial time $O(n^4)$. 
As \thref{prop:ubarc} shows, the exact value of $\ubar{c}$ can be found in polynomial time.\footnote{With additional assumptions, this can be theoretically studied for particular classes of games. For example, \cite{schoenmakers2008repeated} showed in their Corollary 6.1 that if $A$ is a square matrix, the unique optimal strategy for both players without switching costs is completely mixed and all off-diagonal entries in $S$ are equal (as in our \S \ref{s_constantsc}), then $\ubar{c}>0$.}
If $\ubar{c}>0$, it immediately reveals the optimal strategy in the entire $[0,\ubar{c}]$ region, both static  and stationary.
This optimal strategy in the game with switching costs is repeatedly playing an optimal mixed action in game $A$ that incurs the lowest switching costs according to $S$.

\begin{proposition}\thlabel{prop:ubarc}
Let $\ubar{c}$ be the maximal $c$ such that $v(c)=\tilde{v}(c)$  for every $c\in[0,\ubar{c}]$.
If $\ubar{c}>0$, then for every $c\in[0,\ubar{c}]$ the value is attained by the static  strategy $y^*=\argmin\limits_{y\in\mathcal{A}} y^T S y$.
This $\ubar{c}$ can be found in polynomial time.
\end{proposition}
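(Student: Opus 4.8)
The plan is to work with the static value written as $\tilde{v}(c)=\min_y\,[f(y)+ch(y)]$, where $f(y)=\max_x x^{T}Ay$ and $h(y)=y^{T}Sy$, and to show that on $[0,\ubar{c}]$ this minimum is attained at $y^*$ with value $v+cm$, where $m:=(y^*)^{T}Sy^*=\min_{y\in\mathcal{A}}y^{T}Sy$. First I would record the trivial upper bound: since $y^*\in\mathcal{A}$ we have $f(y^*)=v$, so playing $y^*$ gives $\tilde{v}(c)\le f(y^*)+ch(y^*)=v+cm$, and hence $v(c)\le\tilde{v}(c)\le v+cm$ for every $c\ge 0$. Note also that $\tilde{v}(c)\ge v$ because $ch\ge 0$, so $\tilde{v}(c)\to v$ as $c\to 0^+$.

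Next I would pin down the right derivative of $\tilde{v}$ at $0$ and show $\tilde{v}'(0^+)=m$. The upper bound already yields $\tilde{v}'(0^+)\le m$. For the reverse, let $y_c$ attain the minimum defining $\tilde{v}(c)$. Since $f\ge v$ with equality exactly on $\mathcal{A}$, and $f(y_c)=\tilde{v}(c)-ch(y_c)\to v$, every accumulation point of $y_c$ as $c\to 0^+$ lies in $\mathcal{A}$. Writing $\tfrac{\tilde{v}(c)-v}{c}=\tfrac{f(y_c)-v}{c}+h(y_c)\ge h(y_c)$ and passing to a convergent subsequence $y_c\to y_0\in\mathcal{A}$ gives $\liminf_{c\to 0^+}\tfrac{\tilde{v}(c)-v}{c}\ge h(y_0)\ge m$, so $\tilde{v}'(0^+)=m$.

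Now I would invoke the hypothesis $\ubar{c}>0$ together with the piece-wise linearity of $v$ from \thref{lem:proportiesofvc}. On $[0,\ubar{c}]$ we have $v=\tilde{v}$; let $[0,c_1]$ be the first linear segment of $v$, which overlaps $(0,\ubar{c}]$ since $\ubar{c}>0$. On this overlap $\tilde{v}$ coincides with the affine function $v(\cdot)$, so its slope equals $\tilde{v}'(0^+)=m$; thus $v(c)=\tilde{v}(c)=v+cm$ there, a value attained by the static strategy $y^*$. To extend this to the whole interval I would study the convex gap $\psi(c):=(v+cm)-\tilde{v}(c)\ge 0$, convex because $\tilde{v}$ is concave by \thref{lem:propertiesoftildevc}, with $\psi(0)=0$ and $\psi'(0^+)=0$; hence $\{\psi=0\}$ is an interval $[0,\ubar{c}']$. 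Squeezing $v(c)\le\tilde{v}(c)\le v+cm$ on $[0,c_1]$ forces $v=\tilde{v}=v+cm$ there, so $c_1\le\ubar{c}'$; meanwhile concavity of $v$ makes $v(c)<v+cm$ strictly for $c>c_1$, while $\tilde{v}=v+cm$ on all of $[0,\ubar{c}']$, so $v\ne\tilde{v}$ on $(c_1,\ubar{c}']$. This shows $\ubar{c}=c_1$ and that $y^*$ is optimal on the entire interval $[0,\ubar{c}]$.

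The delicate point is precisely this last step: ruling out that the static and stationary values keep coinciding after $v$ first bends, which would let the optimal static strategy drift away from $y^*$. The convex-gap argument closes this, the only genuine edge case being $c_1=\ubar{c}'$, which I would settle by observing that a true breakpoint of $v$ drops its slope below $m$, so any static strategy matching $v(c)$ beyond $c_1$ would have $h<m$ and therefore leave $\mathcal{A}$, contradicting $\tilde{v}=v+cm$ on $[0,\ubar{c}']$. Finally, for the polynomial-time claim I would use that $v(c)$ is the value of a single-controller stochastic game and is therefore computable by a linear program \citep{filar1980algorithms}; since $c$ enters this program linearly, $\ubar{c}$ is the first breakpoint of the parametric optimum and is found by a standard sensitivity/ratio-test analysis, with the optimal $y^*$ read off the LP solution for small $c$. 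Equivalently, since $v(c)$ is poly-time computable and $v(c)=v+cm$ exactly for $c\le\ubar{c}$, one may binary-search for $\ubar{c}=\sup\{c:\,v(c)=v+cm\}$.
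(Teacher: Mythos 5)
Your route is genuinely different from the paper's. The paper argues constructively: it uses the ACOE to build the candidate stationary equilibrium in which Player~2 plays $y^*$ in \emph{every} state, solves an $n\times n$ linear system for the continuation payoffs as linear functions of $c$, and then determines, via $n$ further linear systems (plus inequality checks), the largest $c$ for which Player~1 can complete this into an equilibrium; that threshold is $\ubar{c}$, and counting the linear-algebra operations gives the polynomial-time claim. Your analytic core, by contrast, is a value-function argument, and it is correct as far as it goes: $\tilde{v}(0)=v(0)=v$, the right derivative of $\tilde{v}$ at $0$ equals $m=\min_{y\in\mathcal{A}}y^{T}Sy$ (the accumulation-point argument is fine), and therefore, once $\ubar{c}>0$ forces $\tilde{v}$ to agree with the affine piece of $v$ near $0$, the squeeze $v(c)\le\tilde{v}(c)\le v+cm$ gives $v(c)=\tilde{v}(c)=v+cm$, attained by $y^*$, on the whole first linear segment $[0,c_1]$ of $v$. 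This proves $\ubar{c}\ge c_1$ and the optimality of $y^*$ there.

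The genuine gap is the converse inequality $\ubar{c}\le c_1$, i.e., ruling out that $v$ and $\tilde{v}$ keep coinciding after $v$ bends --- exactly the scenario in which the proposition would fail, since for $c>c_1$ one has $v(c)<v+cm$ and $y^*$ no longer attains the value. Your argument disposes of the case $\ubar{c}'>c_1$, but in the edge case $\ubar{c}'=c_1$ the claimed contradiction is a non sequitur. Suppose a static strategy $y$ satisfies $f(y)+ch(y)=v(c)=\tilde{v}(c)$ for some $c>c_1$, where $f(y)=\max_x x^{T}Ay$ and $h(y)=y^{T}Sy$. You correctly deduce $h(y)<m$ and hence $f(y)>v$, so $y\notin\mathcal{A}$; but this does not contradict $\tilde{v}(c')=v+c'm$ on $[0,c_1]$. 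The supporting line $c'\mapsto f(y)+c'h(y)$ has intercept strictly above $v$ and slope strictly below $m$, so it can lie weakly above $v+c'm$ throughout $[0,c_1]$ and cross that line only to the right of $c_1$, touching $\tilde{v}$ at $c$ where $\tilde{v}$ has already dropped below $v+cm$; nothing is violated. So the possibility that both values continue to coincide past $c_1$, with the optimal static strategy jumping away from $\mathcal{A}$, is not excluded by your argument, and some additional idea (in the paper, the explicit equilibrium construction and the linear-system thresholds that certify exactly where the $y^*$-equilibrium breaks down) is needed. This gap also infects your polynomial-time claim: both the parametric-LP and the binary-search versions presuppose that $\ubar{c}$ is the first breakpoint of $v$, which is the step in question, and binary search in any case returns only an approximation of $\ubar{c}$ rather than its exact value, whereas the paper's linear systems yield it exactly.
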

\begin{proof}
First, note that $y^*=\argmin\limits_{y\in\mathcal{A}} y^T S y$, the optimal strategy for Player~2 in game $A$ with the lowest average switching costs is well defined: $\mathcal{A}$ is a compact set, $y^T S y$ is continuous, so it has a minimum and $y^*$ can be taken as any of them.
We assume that $y^*$ is not pure, otherwise the entire discussion is trivial.

We still need to find $\ubar{c}$.
Let $I_1$ be the largest support of optimal actions of Player~1 in $A$ and $x_0\in I_1$ (arbitrarily chosen).
Use ACOE (see Appendix \ref{app_acoe}) with undiscounted payoff $\gamma(c)= v + c {y^*}^T S y^*$ and actions $x_0,y^*$ in each state.
This results in an $n\times n$ linear equation, where the unknowns are the continuation payoffs, their coefficients are the elements of $y^*$, and the free elements are linear functions of $c$.
Solve it to find the continuations payoffs (they are linear functions of $c$).

Then, plug in the continuation payoffs to each state and consider each state as a one-shot game.
Since both the switching costs and the continuation payoffs are the same in each column, as in the proof of \thref{lem:proportiesofvc}, they cancel out. 
Thus when Player~2 plays $y^*$, he makes Player~1 indifferent among all the actions in $I_1$ and prefer them over all the other actions.

Next, check whether in each such game, $y^*$ can actually be played by Player~2 in equilibrium, i.e. check whether Player~1 has a mixed action over $I_1$ that keeps Player~2 indifferent among all the columns for which $y^*_i>0$ and still preferring them over the ones where $y^*_i=0$ (this time, including the switching costs and the continuation payoffs!).
Each such test consists of solving another (order of) $n\times n$ linear equation (and verifying some inequalities) and results in an upper bound on $c$ for which the solution is indeed a distribution over the rows.
The minimum of all these $n$ upper bounds is the required $\ubar{c}$.

In this proof, one solves one $n\times n$ linear equation, then $n$ linear equations with $m$ variables and $n$ equations.
Since the complexity of solving linear equations is polynomial in the dimensions, so does the entire algorithm.
When $m$ is of the order of $n$, this is bounded by $O(n^4)$.
\hfill\end{proof}

Recall that we assume that the diagonal of $S$ is zero, while the off-diagonal elements are non-negative and for normalization, the minimal non-zero element is $1$.
Now we also assume that there are no free switches, i.e., all off-diagonal elements of $S$ are strictly positive.
In this case, increasing $c$ increases the cost of playing mixed strategies and, at some point, it will be optimal to play purely.
The cutoff after which the optimal stationary strategy is pure is denoted by $\bar{c}$.
The following proposition proves its existence and generalizes Theorem 6.3 in \cite{schoenmakers2008repeated}.

\begin{proposition}\thlabel{prop:barc}
Let $S$ be a matrix where all off-diagonal elements are strictly positive.
Then there exists $\bar{c}$ s.t. Player~2's optimal strategy for every $c\geq\bar{c}$ is the pure minimax strategy in the matrix game $A$, but not for $c<\bar{c}$.
Hence, $v(c)= \tilde{v}(c)=\bar{v}$ for $c\geq \bar{c}$, where $\bar{v}$ is the minimax of game $A$ in pure actions.% and both players play purely.
\end{proposition}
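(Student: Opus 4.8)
The plan is to sandwich $v(c)$ between $\bar v$ from above (for all $c$) and from below (for large $c$), and then to locate the threshold using the shape of $v$ established in \thref{lem:proportiesofvc}. The easy direction is the upper bound: playing a pure minimax action of $A$ forever is a static strategy that never switches, so it incurs no switching cost and guarantees that Player~2 pays at most $\bar v$ per stage. Hence $v(c)\le \tilde v(c)\le \bar v$ for every $c\ge 0$, and it remains only to prove that $v(c)=\bar v$ for all sufficiently large $c$.

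To get the matching lower bound I would exploit the shape of $v$. By \thref{lem:proportiesofvc}, $v$ is piece-wise linear with finitely many pieces, non-decreasing, and, by the previous step, bounded above by $\bar v$; therefore the slope on its last, unbounded segment $[c_K,\infty)$ must be $0$, so $v\equiv v_\infty\le \bar v$ there. To identify $v_\infty$, I would invoke \thref{cor:optimalstationaryispiecewise}: on $[c_K,\infty)$ there is an optimal stationary strategy $\sigma^*$ independent of $c$. As in the proof of that corollary, $\sigma^*$ fixes the transition probabilities between states, so the payoff against Player~1's best response is linear in $c$ with slope equal to the long-run average switching cost induced by $\sigma^*$. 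Since this payoff equals the constant $v_\infty$ along the whole unbounded segment, that slope must be $0$; that is, the long-run average switching cost under $\sigma^*$ vanishes.

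The crux is to convert a vanishing long-run switching cost into pure play. Writing the long-run switching cost as $\sum_k \mu_k \sum_{j\ne k}\sigma^*_k(j)\,s_{kj}$ against a stationary distribution $\mu$, and using the hypothesis that every off-diagonal $s_{kj}$ is strictly positive, each recurrent state $k$ is forced to satisfy $\sigma^*_k(k)=1$: every recurrent state is an absorbing self-loop. Thus, from any initial state the chain is absorbed almost surely into some action $k$ that Player~2 then repeats forever; the transient phase is negligible for the $\liminf$-average, so the payoff equals the corresponding $\max_i a_{ik}$ (Player~1 best-responding inside the absorbing action), a convex combination of which is $v_\infty$. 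Since each $\max_i a_{ik}\ge \bar v$, this gives $v_\infty\ge \bar v$, and with the upper bound $v_\infty=\bar v$, with every absorbing action being a pure minimax action. The main obstacle is precisely this step: making the ``zero average cost forces self-loops'' argument rigorous across possibly several recurrent classes, and handling the $\liminf$-average so that only the absorbing tail contributes while the finite transient part drops out.

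Finally I would extract the threshold. Set $\bar c=\inf\{c\ge 0: v(c)=\bar v\}$. Since $v$ is non-decreasing, continuous, bounded above by $\bar v$, and equal to $\bar v$ for large $c$, the set $\{v=\bar v\}$ is a closed up-set, hence equal to $[\bar c,\infty)$. For $c\ge \bar c$ the pure minimax strategy achieves $v(c)=\bar v$ and is therefore optimal, and $\tilde v(c)=\bar v$ follows from the sandwich $v(c)\le \tilde v(c)\le \bar v$; for $c<\bar c$ we have $v(c)<\bar v$, so the pure minimax strategy (which still yields $\bar v$) is strictly suboptimal. That $\bar c>0$ follows from the standing assumption that the value of $A$ is not attained by a pure action, which forces $v(0)<\bar v$.
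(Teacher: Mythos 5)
Your proof is correct, but it takes a genuinely different route from the paper's. The paper argues directly by dominance: as $c\to\infty$, every pure stationary strategy that ever switches becomes strictly dominated by one that does not (formally, in the auxiliary matrix game of \cite{filar1984matrix}, every column representing a switch has a $c$-coefficient that is strictly positive under the hypothesis on $S$, so for $c$ large enough all such columns are dominated); hence Player~2 eventually plays a single pure action forever, and the best such action is the pure minimax of $A$. You instead argue indirectly through the global shape of $v$: the pure-minimax static strategy gives $v(c)\le\tilde v(c)\le\bar v$ for all $c$, piece-wise linearity plus monotonicity force slope zero on the last unbounded segment, \thref{cor:optimalstationaryispiecewise} converts that zero slope into a zero long-run average switching cost for a single optimal strategy $\sigma^*$, and strict positivity of the off-diagonal entries of $S$ then forces every recurrent state under $\sigma^*$ to be an absorbing self-loop, whence $v_\infty\ge\bar v$ by absorption. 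The step you flag as the main obstacle is in fact fine as you sketch it: each term $\mu_k\sigma^*_k(j)s_{kj}$ is nonnegative, so all vanish, every reachable recurrent class is a singleton, and transient states are visited only finitely often a.s., so they do not affect the $\liminf$-average. What the two approaches buy: the paper's dominance argument is shorter and identifies the mechanism (switching itself becomes dominated); yours is self-contained given \thref{lem:proportiesofvc} and \thref{cor:optimalstationaryispiecewise}, and it additionally delivers the threshold structure cleanly --- the set $\{c: v(c)=\bar v\}$ is exactly a closed half-line $[\bar c,\infty)$, pure minimax is strictly suboptimal below $\bar c$, and $\bar c>0$ under the paper's standing assumption --- points the paper's proof leaves implicit in the phrase ``for $c$ large enough.''
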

\begin{proof}
In each state, when $c\to\infty$, switching a pure action is strictly dominated by replaying it (alternatively, when $c\to\infty$, in the matrix from \cite{filar1984matrix}, all pure actions that represent switches become dominated since $c$ is multiplied by positive constants).
Hence, for $c$ large enough, Player~2 never changes his action and plays the same pure action for all $t$.
Under this restriction, the optimal strategy is to play the pure minimax action of matrix game $A$.
Clearly, this is also a static  strategy.
\hfill\end{proof}

The case of large $c$ is interesting, as it represents a situation where the switching cost is higher than the possible one-stage gain.
For example, the profit of a firm that adjusts its price to dominate a market can be much smaller than the adjustment cost, provided that the other firms in the market respond quickly enough.
\cite{chak90characterizations} studied this scenario for supergames and showed how such high switching costs affect the set of possible equilibria and the optimal strategies, which must include only finitely many action switches.

Although we have no efficient algorithm to calculate $\bar{c}$, an upper bound for it can be found in the following manner.
A sufficient condition for the optimal stationary strategy to be pure is that one pure action of the one-shot game presented in \cite{filar1984matrix} strictly dominates all others (see the proof of \thref{lem:proportiesofvc}).
Hence, every mixed entry in the matrix should suffice $\bar{v}\leq b_{ij}+\beta_j c$.
Assuming $\beta_j>0$, we get $c\geq \tfrac{\bar{v}-b_{ij}}{\beta_j}$.
When this condition is met for all $i,j$, we get $\tilde{v}(c)=\bar{v}$, hence $\bar{c}\leq \max\limits_{i,j} \left(\tfrac{\bar{v}-b_{ij}}{\beta_j}\right)$.

Ideally, we would like to have the exact value of $\bar{c}$ or the smallest possible upper bound (this will more accurately provide the difference between $v(c)$ and $\tilde{v}(c)$).
For example, checking all the values in the matrix and finding the maximal for the above expression would be a better algorithm than what follows.
However, given the large matrix (order of $n^m\times n^n$), computation would be hard and time-consuming.
Instead, we can maximize the expression element-by-element -- requiring that $\beta_j$ and $b_{ij}$ be minimal but not necessarily for the same $i,j$.

By normalization, the lowest off-diagonal switching cost is $1$.
Since $\beta_j$ is the average switching cost  per stage, we take it to be the minimum, which is $1$.
Similarly, for $b_{ij}$ we can choose the lowest entry in $A$.
An upper bound on $\bar{c}$ can be written as
\begin{equation}
\hat{\bar{c}}=\bar{v}-\min\limits_{i,j} a_{ij}=\bar{v},
\end{equation}
since $A$ was normalized s.t. $\min\limits_{i,j} a_{ij}=0$.
In all the following calculations and formulas, if the exact value of $\bar{c}$ is unknown, it is possible to use $\hat{\bar{c}}$ remembering that $\hat{\bar{c}}\geq \bar{c}$ and hence the bound will be less precise (larger).

Subsequently, the only region where stationary strategies might out-perform static  ones is $(\ubar{c},\bar{c})$.
As the following example shows, however, a static  strategy could be optimal for some $c$s in this region.
\begin{example}\textbf{A game where $v(c)$ and $\tilde{v}(c)$ intersect in $(\ubar{c},\bar{c})$} \rm \thlabel{ex_vcmeetstildevc}

Suppose $A=\left(\begin{smallmatrix} 1 & 0 & 0\\ 0 & 1 & 0 \\ 0 & 0 & 1 \end{smallmatrix}\right)$ and $S=\left(\begin{smallmatrix} 0 & 1 & 2\\ 1 & 0 & 3 \\ 2 & 3 & 0 \end{smallmatrix}\right)$.
For $c<\tfrac{1}{6}$, the optimal stationary strategy is the static  strategy that chooses each action w.p. $\tfrac{1}{3}$, regardless of the history.
Starting from $c=\tfrac{1}{6}$, it is too costly to play $R$ after $M$, so the optimal stationary strategy is to play $[0.5(L), 0.5(M)]$ in $s_M$ (and each action w.p.\ $\tfrac{1}{3}$ in the two other states).
In this region, the stationary strategy outperforms static  strategies.

Next, at $c=\tfrac{3}{13}$, it is also too costly to play $R$ in $s_L$ and this action is never played.
The optimal stationary strategy is the static  strategy that plays $[0.5(L),0.5(M)]$ at every stage.
This remains true until $c$ is high enough to make switching too costly and the optimal strategy is playing a pure action which leads to $v(c)=\tilde{v}(c)=1$. \hfill$\Diamond$
\end{example}

In the above example, $v(c)=\tilde{v}(c)$ before $\bar{c}$, where the value functions flatten.
As outside $(\ubar{c},\bar{c})$ the value functions are equal, the only region of interest is $(\ubar{c},\bar{c})$, and the following theorem bounds the difference in payoff functions there.

\begin{theorem}\thlabel{prop:loss}
Player~2's loss from limiting himself to static  strategies, $\tilde{v}(c)-v(c),$ is bounded by
\begin{equation}\label{eq:prop:loss_formula}
\Delta(c) =
 \begin{cases}
   (c-\ubar{c})\tfrac{\bar{c}-c_0}{\bar{c}-\ubar{c}}s        & \text{if } c \in (\ubar{c},c_0), \\
   (\bar{c}-c)\tfrac{c_0-\ubar{c}}{\bar{c}-\ubar{c}}s        & \text{if } c \in (c_0,\bar{c}), \\
   0        & \text{otherwise,}
  \end{cases}
\end{equation}
where $s={y^*}^T S y^*$, $c_0=\tfrac{\bar{v}-v}{s}$ and $\ubar{c}$ and $y^*$ are defined in \thref{prop:ubarc}, and $\bar{c}$ in \thref{prop:barc}.
\end{theorem}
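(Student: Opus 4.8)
The plan is to sandwich both value functions between explicit bounds and then subtract. The key observation is that Player~2 has two particularly simple \emph{static} strategies available, each yielding an upper bound on $\tilde{v}(c)$: playing the optimal action $y^*$ of game $A$ repeatedly, and playing the pure minimax action repeatedly. The first gives $\tilde{v}(c)\leq v+cs$ where $s={y^*}^TSy^*$, since against the static strategy $y^*$ Player~1's best response in $A$ yields $\max_x x^TAy^*=v$ and the per-stage switching cost is exactly $s$; the second gives $\tilde{v}(c)\leq\bar{v}$, since a repeated pure action never incurs switching costs. Hence $\tilde{v}(c)\leq U(c):=\min(v+cs,\bar{v})$, and the two affine pieces defining $U$ cross precisely at $c_0=\tfrac{\bar{v}-v}{s}$.

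For the lower bound on $v(c)$ I would invoke the concavity of $v$ established in \thref{lem:proportiesofvc} together with the two endpoint values. By \thref{prop:ubarc}, $v(c)=v+cs$ on $[0,\ubar{c}]$, so in particular $v(\ubar{c})=v+\ubar{c}s$; by \thref{prop:barc}, $v(\bar{c})=\bar{v}$. Concavity then forces $v(c)$ to lie weakly above the chord $L(c)$ joining $(\ubar{c},v+\ubar{c}s)$ and $(\bar{c},\bar{v})$ throughout $[\ubar{c},\bar{c}]$. Combining the two bounds yields $\tilde{v}(c)-v(c)\leq U(c)-L(c)$ on this interval, while outside $(\ubar{c},\bar{c})$ the two values coincide by the very definitions of $\ubar{c}$ and $\bar{c}$, so the bound $0$ holds there.

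The remainder is a direct computation of $U(c)-L(c)$ in the two subregions $(\ubar{c},c_0)$ and $(c_0,\bar{c})$, simplified throughout using the identity $\bar{v}-v=sc_0$. On $(\ubar{c},c_0)$ one has $U(c)=v+cs$, and after factoring $(c-\ubar{c})$ the chord slope combines with $s$ so that the difference collapses to $(c-\ubar{c})\tfrac{\bar{c}-c_0}{\bar{c}-\ubar{c}}s$. On $(c_0,\bar{c})$ one has $U(c)=\bar{v}$, and factoring $(\bar{c}-c)$ turns the difference into $(\bar{c}-c)\tfrac{c_0-\ubar{c}}{\bar{c}-\ubar{c}}s$. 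These match the two nontrivial branches of $\Delta(c)$ exactly.

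The one point that must be checked carefully, and where the piecewise description could be inconsistent, is that $c_0$ genuinely lies in $[\ubar{c},\bar{c}]$. I would derive this from the same sandwiching rather than from any separate argument: evaluating the bound $v(c)\leq v+cs$ at $c=\bar{c}$ gives $\bar{v}\leq v+\bar{c}s$, i.e.\ $c_0\leq\bar{c}$, while evaluating $v(c)\leq\bar{v}$ at $c=\ubar{c}$ gives $v+\ubar{c}s\leq\bar{v}$, i.e.\ $\ubar{c}\leq c_0$. In the degenerate boundary cases ($c_0=\ubar{c}$ or $c_0=\bar{c}$) one of the two branches has empty domain and the formula remains valid. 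Everything else is routine algebra.
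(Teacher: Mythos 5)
Your proof is correct and takes essentially the same route as the paper's: upper-bound $\tilde{v}(c)$ by $\min(v+cs,\bar{v})$ using the two static strategies $y^*$ and the pure minimax action, lower-bound $v(c)$ by the chord through $(\ubar{c},v+\ubar{c}s)$ and $(\bar{c},\bar{v})$ via concavity, and subtract. Your explicit verification that $c_0\in[\ubar{c},\bar{c}]$ (and the worked algebra) fills in details the paper leaves implicit, but it is the same argument, not a different one.
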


\begin{proof}
Define $s={y^*}^T S y^*$ ($y^*$ is defined in \thref{prop:ubarc}).
From concavity, $v(c)$ lies above the line that connects $(\ubar{c},v+\ubar{c} s)$ and $(\bar{c},\bar{v})$ (denoted by $l$).
Function $\tilde{v}(c)$ must be below lines $\bar{v}$ (corresponding to a pure static  strategy) and $v+c s$ (corresponding to static  strategy $y^*$).
These lines meet at $c_0=\tfrac{\bar{v}-v}{s}$.
Hence, for $c<c_0$ the difference between the value in static  and in stationary strategies is at most the difference between line $v+cs$ and $l$, and for $c>c_0$ it is the difference between $\ubar{v}$ and $l$ (see Fig. \ref{Fig:BoundOnBeingFixed}).
A direct computation reveals that these differences are as in Eq.~\eqref{eq:prop:loss_formula}.\hfill\end{proof}

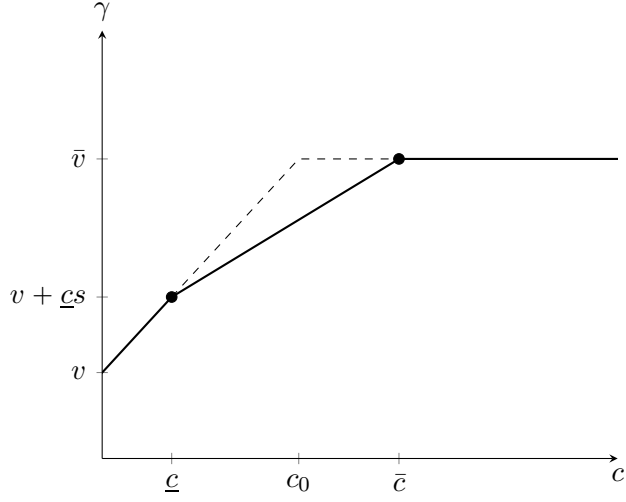
\begin{figure}
\begin{center}
\begin{tikzpicture}

\begin{axis}[axis x line=middle, axis y line=middle, xmin=0.68,  xmax=0.9, ymin=0.93, ymax=1.03, xtick={22/31, 55/72, 121/150}, xticklabels={$\ubar{c}$,$c_0$,$\bar{c}$}, ytick={6/11+0.68*72/121,30/31, 1}, yticklabels={$v$,$v+\ubar{c}s$,$\bar{v}$}, xlabel={$c$}, ylabel={$\gamma$},  x label style={at={(axis description cs:1,0)},anchor=north},    y label style={at={(axis description cs:0,1)},anchor=south}]
%\pgfplotsset{ticks=none}
	\addplot[mark=*, only marks] coordinates {(0,6/11) (22/31,30/31) (121/150,1)}; %dots
	\addplot[thick] coordinates{(0,6/11) (22/31,30/31) (121/150,1) (1,1)};%line
	%Without the 2action solution:
	\addplot[dashed] coordinates{(22/31,30/31) (55/72,1) (121/150,1)};%line
	%With the 2 action solution:	
	%\addplot[dashed] coordinates{(22/31,30/31) (33/41,42/41) (3/4,1) (55/72,1) (121/150,1)};%line
	
\end{axis}
\end{tikzpicture}
\caption{The intuition for the bound in Eq.~\eqref{eq:prop:loss_formula}.
The maximal difference between $v(c)$ and $\tilde{v}(c)$ is smaller than the maximal difference between the dashed and the solid lines, which is attained at $c_0$.}\label{Fig:BoundOnBeingFixed}
\end{center}
\end{figure}

In \thref{prop:loss}, we construct a bound by comparing a lower bound on $v(c)$ with an upper bound on $\tilde{v}(c)$.
This upper bound is constructed by using two feasible and simple static  strategies: the minimax pure action of game $A$ and an optimal mixed action of game $A$, the one with the lowest switching cost.
Any convex combination of these two static  strategies is also a static  strategy and can also be considered to improve the bound.

In particular, assume $y^*$ is, for Player~2 in game $A$, the optimal mixed action with the lowest switching costs ($y^*=\argmin\limits_{y\in\mathcal{A}} y^T S y$) and $\bar{y}$ is the pure minimax action of Player~2 (so $\bar{y}^TS\bar{y}=0$).
For every $\alpha\in [0,1]$, $y_\alpha =\alpha y^* + (1-\alpha ) \bar{y}$ is also a feasible static  strategy for Player~2 and its payoff bounds $\tilde{v}(c)$ from above.
\begin{theorem}\thlabel{thm:Bound_using_alpha_mixture}
Let $\bar{y}$ be a pure minimax action in game $A$, and denote by $\hat{s}={y^*}^TS\bar{y}+\bar{y}^TSy^*$.\footnote{Different bounds can be attained by using different pure minimax actions (if there are several). In that case, all of them can be constructed using the method presented, and the lowest chosen.}
If $s>\hat{s}$, then for every $c\in [c_1,c_2]\cap (\ubar{c},\bar{c})$,
the bound from \thref{prop:loss} can be improved by using 
\begin{equation}\label{eq:Bound_using_alpha_mixture}
\Delta (c)=(\bar{v}-v-\ubar{c}s)\tfrac{\bar{c}-c}{\bar{c}-\ubar{c}}-\tfrac{(\bar{v}-v-c\hat{s})^2}{4c(s-\hat{s})},
\end{equation}
where $c_1=\tfrac{s}{2s-\hat{s}}c_0$ and $c_2=\tfrac{s}{\hat{s}}c_0$.
\end{theorem}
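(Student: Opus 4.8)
The plan is to bound $\tilde v(c)$ from above by evaluating the minimax objective at the single feasible static strategy $y_\alpha=\alpha y^*+(1-\alpha)\bar y$, then to optimize over the free parameter $\alpha\in[0,1]$, and finally to subtract the same concavity-based lower bound on $v(c)$ already used in \thref{prop:loss}.

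First I would compute the two pieces of $g(c)(x,y_\alpha)$. Since $\bar y$ is pure, $\bar y^TS\bar y=0$, so expanding the quadratic form gives $y_\alpha^TSy_\alpha=\alpha^2 s+\alpha(1-\alpha)\hat s$. For the game payoff, because $x^TAy_\alpha=\alpha\,x^TAy^*+(1-\alpha)\,x^TA\bar y$ is a convex combination for each fixed $x$, the maximum over $x$ is subadditive, giving
\[
\max_x x^TAy_\alpha\le \alpha\max_x x^TAy^*+(1-\alpha)\max_x x^TA\bar y=\alpha v+(1-\alpha)\bar v,
\]
where I use that $y^*\in\mathcal A$ is optimal in $A$ (so $\max_x x^TAy^*=v$) and that $\bar y$ is a pure minimax action (so $\max_x x^TA\bar y=\bar v$). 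Since $\tilde v(c)\le\max_x g(c)(x,y_\alpha)$, this yields the upper bound
\[
\tilde v(c)\le U(\alpha,c):=\bar v+\alpha(v-\bar v+c\hat s)+c\alpha^2(s-\hat s).
\]

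Next I would minimize $U(\alpha,c)$ in $\alpha$. The hypothesis $s>\hat s$ makes $U(\cdot,c)$ a strictly convex parabola, with interior minimizer $\alpha^*=\tfrac{\bar v-v-c\hat s}{2c(s-\hat s)}$ and minimal value $\bar v-\tfrac{(\bar v-v-c\hat s)^2}{4c(s-\hat s)}$. Then I would check feasibility: with $c_0=\tfrac{\bar v-v}{s}$ one has $\alpha^*\ge0\iff c\le\tfrac{\bar v-v}{\hat s}=c_2$ and $\alpha^*\le1\iff c\ge\tfrac{\bar v-v}{2s-\hat s}=c_1$ (here $2s-\hat s>0$ again by $s>\hat s\ge0$). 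Hence exactly on $c\in[c_1,c_2]$ the optimal convex combination is admissible and gives $\tilde v(c)\le \bar v-\tfrac{(\bar v-v-c\hat s)^2}{4c(s-\hat s)}$.

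Finally I would combine this with the lower bound on $v(c)$. As in \thref{prop:loss}, concavity of $v$ places it above the chord $l$ joining $(\ubar c,\,v+\ubar c\,s)$ and $(\bar c,\bar v)$, and a short computation of the slope gives $\bar v-l(c)=(\bar v-v-\ubar c\,s)\tfrac{\bar c-c}{\bar c-\ubar c}$. Subtracting the two bounds yields
\[
\tilde v(c)-v(c)\le \bigl(\bar v-l(c)\bigr)-\tfrac{(\bar v-v-c\hat s)^2}{4c(s-\hat s)}=(\bar v-v-\ubar c\,s)\tfrac{\bar c-c}{\bar c-\ubar c}-\tfrac{(\bar v-v-c\hat s)^2}{4c(s-\hat s)},
\]
which is precisely the stated $\Delta(c)$; since the subtracted term is strictly positive on $(c_1,c_2)$, this strictly improves \thref{prop:loss}. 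I expect the main obstacle to be the feasibility bookkeeping rather than any analytic difficulty: one must confirm that the unconstrained optimizer $\alpha^*$ lies in $[0,1]$ exactly on $[c_1,c_2]$ (and then intersect with $(\ubar c,\bar c)$), and that the interior combination genuinely beats the two corner strategies of \thref{prop:loss}, both of which hinge on the assumption $s>\hat s$.
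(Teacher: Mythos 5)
Your proposal is correct and follows essentially the same route as the paper's own proof: evaluate the minimax objective at the one-parameter family $y_\alpha=\alpha y^*+(1-\alpha)\bar y$, bound the game part via $x^TAy^*\le v$ and $x^TA\bar y\le\bar v$, minimize the resulting convex parabola in $\alpha$ (with $s>\hat s$ guaranteeing convexity and $[c_1,c_2]$ exactly characterizing when the minimizer lies in $[0,1]$), and subtract the chord lower bound on $v(c)$ from \thref{prop:loss}. The only cosmetic difference is that you invoke subadditivity of $\max_x$ over the convex combination, while the paper bounds $g(c)(x,y_\alpha)$ pointwise in $x$; your closing observation that the interior minimizer strictly beats both corner strategies $f(0)=\bar v$ and $f(1)=v+cs$ is indeed the right way to justify that the bound improves on \thref{prop:loss}.
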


\begin{proof}
For every $x$, the payoff of the strategy pair $(x,y_\alpha)$ is
\begin{eqnarray}
g(c)(x,y_\alpha)&=&xAy_\alpha + c y_\alpha^T S y_\alpha \nonumber \\
&=& \alpha xAy^*+(1-\alpha) xA\bar{y} +c\alpha^2 {y^*}^T S y^* + c\alpha(1-\alpha)\left[{y^*}^TS\bar{y}+\bar{y}^TSy^*\right] . \nonumber
\end{eqnarray}
Since $y^*$ is optimal in $A$, it guarantees the value and $xAy^*\leq v$.
Similarly, $\bar{y}$ guarantees the minimax value so $xA\bar{y}\leq \bar{v}$.
As previously, we let $s={y^*}^T S y^*$ and to simplify the expressions also $\hat{s}={y^*}^TS\bar{y}+\bar{y}^TSy^*$ (note that if $S$ is symmetric, $\hat{s}=2{y^*}^TS\bar{y}$).
Therefore $g(c)(x,y_\alpha)$ can be bounded by the following quadratic function of $\alpha$:
\begin{equation}
g(c)(x,y_\alpha)\leq \alpha v+(1-\alpha)\bar{v}+\alpha^2 cs+ \alpha(1-\alpha) c \hat{s}=\alpha^2 c(s-\hat{s})+\alpha(c\hat{s}+v-\bar{v})+\bar{v}\coloneqq f(\alpha). \nonumber
\end{equation}
Since $\tilde{v}(c)\leq g(c)(x,y_\alpha)$, it follows that $\tilde{v}(c)\leq \min\limits_{\alpha\in [0,1]} f(\alpha)$.
Hence, by finding this minimum, we can obtain a tighter bound on $\tilde{v}(c)-v(c)$ relative to \thref{prop:loss}.

If $f$ is concave, the minimum is obtained on one of the extreme points (depending on $c$), which takes us back to the case of \thref{prop:loss}.
In fact, we depart from the case presented in \thref{prop:loss} only if $f$ is convex and the minimum is inside $(0,1)$, i.e. only if $s>\hat{s}$ and $0\leq \tfrac{\bar{v}-v-c\hat{s}}{2c(s-\hat{s})}\leq 1$.

For the rest of the proof we assume therefore that $s>\hat{s}$.
The condition $0\leq \tfrac{\bar{v}-v-c\hat{s}}{2c(s-\hat{s})}\leq 1$ can be rearranged as a condition on $c$.
The ``$0\leq$'' part becomes $c\leq c_2=\tfrac{\bar{v}-v}{\hat{s}}$ and the ``$\leq 1$'' becomes 
$c\geq c_1= \tfrac{\bar{v}-v}{2s-\hat{s}}$.

Hence, for $c\in [c_1,c_2]\cap (\ubar{c},\bar{c})$, the bound from \thref{prop:loss} can be improved by using 
\begin{equation}
f\left(\tfrac{\bar{v}-v-c\hat{s}}{2c(s-\hat{s})}\right)=\bar{v}-\tfrac{(c\hat{s}+v-\bar{v})^2}{4c(s-\hat{s})} \nonumber
\end{equation}
instead of $\min \{\bar{v},v+cs\}$.
Thus, the bound for this domain is
\begin{equation}
%\Delta (c)=\bar{v}-\tfrac{(c\hat{s}+v-\bar{v})^2}{4c(s-\hat{s})} - (v+\ubar{c}s)\tfrac{\bar{c}-c}{\bar{c}-\ubar{c}}-\bar{v}\tfrac{c-\ubar{c}}{\bar{c}-\ubar{c}}.
\Delta (c)=(\bar{v}-v-\ubar{c}s)\tfrac{\bar{c}-c}{\bar{c}-\ubar{c}}-\tfrac{(\bar{v}-v-c\hat{s})^2}{4c(s-\hat{s})}. \nonumber
\end{equation}
\hfill\end{proof}

An illustration of this bound can be found in Figure \ref{Fig:BoundOnBeingFixed_with_mixed}, applied to \thref{ex_tildevc_nonlinear}.
Interestingly, in this case the optimal static  strategy is actually a mixture of $y^*$ and $\bar{y}$, so the bound is tight and achieves the minimax value in static  strategies.
We demonstrate this bound through \thref{ex_tildevc_nonlinear} due to the strong variance in the switching-costs matrix.
Without this variance, this bound might not be applicable, as shown in \thref{cor:NO_Bound_using_alpha_mixture_fixedS} for equal switching costs ($s_{ij}=1$ for all $i\neq j$).

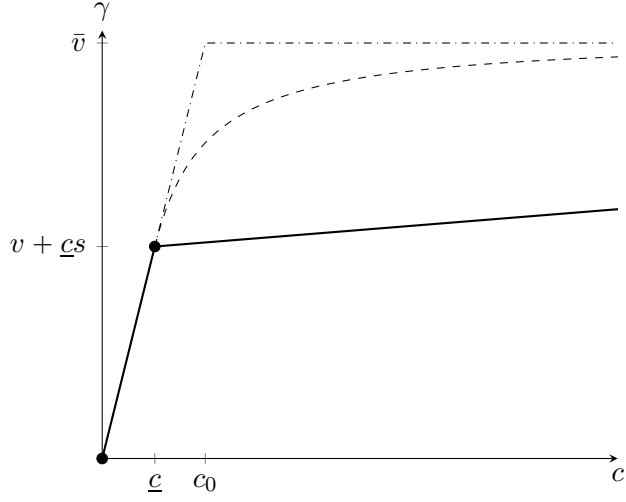
\begin{figure}
\begin{center}
\begin{tikzpicture}

\begin{axis}[axis x line=middle, axis y line=middle, xmin=0,  xmax=0.1, ymin=0, ymax=1.03, xtick={1/98, 1/50, 1/2}, xticklabels={$\ubar{c}$,$c_0$,$\bar{c}$}, ytick={0,25/49, 1}, yticklabels={$v$,$v+\ubar{c}s$,$\bar{v}$}, xlabel={$c$}, ylabel={$\gamma$},  x label style={at={(axis description cs:1,0)},anchor=north},    y label style={at={(axis description cs:0,1)},anchor=south}]
%\pgfplotsset{ticks=none}
	\addplot[mark=*, only marks] coordinates {(0,0) (1/98,25/49) (1/2,1)}; %dots
	\addplot[thick] coordinates{(0,0) (1/98,25/49) (1/2,1) (1,1)};%line - v(c)
	\addplot[dashdotted] coordinates{(0,0) (1/50,1) (1,1)};%line - linear bound on tilde{v}
	\addplot[dashed, domain=1/98:0.5, samples=400]  {1-((1-2*\x)^2/(192*\x))};%node[pos=0.52, right]{$y=2^n$};
	
\end{axis}
\end{tikzpicture}
\caption{The bound of \thref{thm:Bound_using_alpha_mixture}, applied to Example \ref{ex_tildevc_nonlinear}.
The solid line is $v(c)$, the dotted line is the bound (in this example, it is also $\tilde{v}(c)$) and the dot-dashed line is the bound from  \thref{prop:loss}.
For clarity, the graph was zoomed on the region near $c_0$ where the difference is the largest. All three meet at $\bar{c}$.}\label{Fig:BoundOnBeingFixed_with_mixed}
\end{center}
\end{figure}

\subsection{Action-Independent Switching Costs}\label{s_constantsc} 

In this section, we consider the special case of switching costs which are independent of the actions being switched (uniform switching costs), i.e. $s_{ij}=1$ for all $i\neq j$.
This case naturally arises when the switching stem from the act of ``switching'' itself and do not depend on the actions being switched to or from.
For example, consider a person trying to multi-task.
Each time he switches from one task to another, it takes him a short time to regain focus on the task he is about to perform.
This time is generally fixed and does not depend on the nature of the task (whereas the time spent actually performing different tasks can differ significantly).
This case was explored in previous papers, such as \cite{lipman2000switching,lipman2009switching} and \cite{schoenmakers2008repeated}.
In fact, the model \cite{schoenmakers2008repeated} studied is exactly the one presented here, up to an additive constant (as they pay a positive bonus for not switching actions and nothing when actions are switched).
We refer the interested reader to \cite{schoenmakers2008repeated}, who provide additional results using this model.
For example, they analyze in depth the case where $A$ is an $m\times 2$ matrix and provide a formula to compute the optimal strategy when $A$ is a square matrix.

A significant benefit from the uniform switching-costs assumption is that finding the optimal static  strategy becomes simpler.
Indeed, note first that under this assumption, $g(c)(x,y)$ can be significantly simplified.
Let $\1$ be the ones matrix (for all $i,j:\,(\1)_{ij}=1$).
Then $S=\1-I$ and 
\begin{equation}\label{eq:ySy_for_constantS}
y^tSy=y^t(\1-I)y=y^t\1y - ||y||^2=1-||y||^2. 
\end{equation}
So $g(c)(x,y)=x^TAy+c(1-||y||^2)$ and $\tilde{v}(c)=c+\min\limits_{y \in \Delta(\{1,\ldots,n\})} \left\{\max\limits_{x \in \Delta(\{1,\ldots,m\})} \{x^T Ay\} - c ||y||^2 \right\}$.
Using this expression we can strengthen \thref{lem:propertiesoftildevc}, as it turns out that now $\tilde{v}(c)$ is piece-wise linear.
In a similar fashion to \thref{cor:optimalstationaryispiecewise}, this implies the existence of a finite set of strategies to consider when looking for the optimal static  strategy. 

\begin{proposition}\thlabel{lem:tildevc_pw_linear_constantS}
Whenever switching costs are uniform \emph{(}$s_{ij}=1$ for $i\neq j$ and $0$ otherwise\emph{)}, the minimax value $\tilde{v}(c)$ in static  strategies is a continuous, increasing, and concave piece-wise linear function.
\end{proposition}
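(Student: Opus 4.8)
The plan is to leverage the simplification obtained just above the statement: for uniform switching costs,
\[
\tilde{v}(c)=c+\min_{y\in\Delta(\{1,\ldots,n\})}\bigl(\phi(y)-c\|y\|^2\bigr),\qquad \phi(y):=\max_{x\in\Delta(\{1,\ldots,m\})}x^{T}Ay .
\]
Continuity, monotonicity and concavity are already granted by \thref{lem:propertiesoftildevc}, so the only new content is piece-wise linearity. The naive attempt---to argue directly that the minimizing $y$ sits at a vertex of the simplex---fails, because $\phi$ is convex (a maximum of linear functions) while $c\|y\|^2$ is convex as well, so $\phi(y)-c\|y\|^2$ is a difference of convex functions in $y$ and is in general neither convex nor concave. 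This is the obstacle I expect to be central, and the whole argument is built to circumvent it.

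The fix I would use is to lift the problem with an auxiliary variable $w$ standing for $\phi(y)$. Writing
\[
\tilde{v}(c)=c+\min_{(y,w)\in P} F_c(y,w),\qquad F_c(y,w):=w-c\|y\|^2,
\]
with $P=\{(y,w):y\in\Delta(\{1,\ldots,n\}),\ w\ge (Ay)_i\ \forall i\}$, replaces the convex term $\phi(y)$ by the linear term $w$. For every fixed $c\ge 0$ the function $F_c$ is now \emph{concave} in the joint variable $(y,w)$ (a linear term plus the concave $-c\|y\|^2$), and $P$ is a pointed polyhedron on which $F_c$ is bounded below (since $w\ge\min_{k,j}a_{kj}$ and $\|y\|^2\le 1$) and nondecreasing along its only recession direction $(0,+1)$. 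The classical fact that a concave function bounded below on a pointed polyhedron attains its minimum at a vertex then applies, so for each $c$ the minimum of $F_c$ over $P$ is attained at a vertex of $P$.

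The payoff of the lifting is that the vertices of $P$ are cut out by the constraints $\sum_j y_j=1$, $y_j=0$ (for coordinates off the support) and $w=(Ay)_i$ (for the active rows), none of which involves $c$. Hence each vertex $(y_v,w_v)$---which encodes a support pair together with the requirement that Player~1 be indifferent among the active rows, exactly as in the proof of \thref{lem:proportiesofvc}---is a single point independent of $c$, and there are finitely many such vertices. Consequently
\[
\tilde{v}(c)=c+\min_{v}\bigl(w_v-c\|y_v\|^2\bigr)=\min_{v}\bigl(w_v+c(1-\|y_v\|^2)\bigr),
\]
a minimum of finitely many affine functions of $c$; this is piece-wise linear and concave, and the nonnegative slopes $1-\|y_v\|^2=y_v^{T}Sy_v\ge0$ re-confirm that it is increasing. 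As in \thref{cor:optimalstationaryispiecewise}, this also exhibits the finite, $c$-independent candidate set of optimal static strategies promised after the statement. The delicate point remains the justification of vertex-optimality for the concave minimization on the (unbounded) polyhedron $P$; if one prefers to avoid invoking the general theorem, one can intersect $P$ with $\{w\le\max_{k,j}a_{kj}\}$ to obtain a polytope that still contains the optimum, and apply the elementary fact that a concave function on a polytope attains its minimum at an extreme point.
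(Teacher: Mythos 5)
Your proof is correct, and it reaches the paper's conclusion by a genuinely different, though closely related, decomposition. The paper never lifts the problem: it partitions the simplex into Player~1's best-response regions $I_i=\{y\in\Delta(\{1,\ldots,n\}):\ i \text{ is a best response to } y\}$, each a compact polytope; on $I_i$ the inner maximum $\phi(y)$ equals the linear function $A_iy$, so $A_iy-c\|y\|^2$ is concave there and its minimum over $I_i$ sits at a vertex of $I_i$; the union over $i$ of these finitely many, $c$-independent vertices is the candidate set, and $\tilde v(c)$ is the minimum of the resulting affine functions of $c$, exactly as in \thref{lem:proportiesofvc}. Your epigraph lift achieves concavity in one stroke instead of $m$ cases, and the two constructions enumerate essentially the same candidates: at a vertex of your polyhedron $P$ one needs $n+1$ independent active constraints, which forces at least $|\mathrm{supp}(y)|$ rows to satisfy $w=(Ay)_i$, i.e., Player~1 is indifferent among that many best-response rows --- precisely the condition cutting out the vertices of the $I_i$'s. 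What the paper's route buys is compactness for free, so only the elementary fact that a concave function on a polytope attains its minimum at an extreme point is needed, at the cost of the case split over $i$; your route buys a single uniform polyhedron and makes the $c$-independence of the candidate set completely transparent, at the cost of handling the recession direction $(0,+1)$, which you treat correctly (a concave function bounded below along a ray is nondecreasing there, or alternatively your truncation $w\le\max_{k,j}a_{kj}$, which only adds finitely many extra vertices whose affine functions $w_v+c\,(1-\|y_v\|^2)$ never fall below the minimum and hence are harmless). Both arguments finish identically: $\tilde v(c)$ is a finite minimum of affine functions with nonnegative slopes $1-\|y_v\|^2=y_v^TSy_v$, hence piece-wise linear, concave, and nondecreasing.
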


\begin{proof}
It follows from \thref{lem:propertiesoftildevc} that $\tilde{v}(c)$ is continuous, increasing, and concave. 
Now we prove that it is also piece-wise linear.
For each $i\in \{1,\ldots,m\}$, let $I_i$ be the subset of $\Delta(\{1,\ldots,n\})$ such that $i$ is a best response for Player~1 in the one-shot game $A$ to all $y\in I_i$.
It is well known that $I_i$ is a compact, convex polyhedron in $\RR^n$.

Now, suppose Player~2 is restricted to playing only in set $I_i$.
The best response for Player~1 is $i$ and the payoff (based on the above calculation) is
$c+\min\limits_{y \in I_i} \left\{A_iy - c ||y||^2 \right\}$, where $A_i$ is the $i$th row of $A$.
The expression inside the minimization is a concave function in $y$ and the minimization domain is a polyhedron, so the minimum is attained at one of its vertices.

There are $m$ polyhedrons and each one has finitely many vertices, so the set of candidate optimal strategies is finite.
Using a similar argument as in \thref{lem:proportiesofvc}, it follows that $\tilde{v}(c)$ is piece-wise linear.\hfill\end{proof}

The reasoning behind this proof can be used to slightly strengthen \thref{lem:propertiesoftildevc} (even without the additional assumption on $S$).
If we restrict Player~2 to $I_i$ and write $g(c)(x,y)$ in the general case explicitly (including the constraint $\sum y_i=1$), we obtain a function which is concave in each of the variables $y_i$.
Thus, the $\argmin$ cannot be an interior point of $I_i$ and must lie on the boundary.
However, since the function is not concave, the $\argmin$ is not guaranteed to be  on a vertex, and could be on the boundary, where it moves as $c$ changes, as in \thref{ex_tildevc_nonlinear}.
Still, the fact that the $\argmin$ is on the boundary of one of the $I_i$s means that when playing an optimal static  strategy, Player~2 keeps Player~1 indifferent between at least two of her actions, and this can be used to reduce the dimension of the search.

As mentioned, a direct result of \thref{lem:tildevc_pw_linear_constantS} is that the optimal strategy comes from a finite set and is robust to the exact value of $c$.
Unlike \thref{cor:optimalstationaryispiecewise} however, this time it is also true for the best responses of Player~1 (since she best-responds purely).
This provides a theoretical explanation for the numerical results of  \cite{rass2014numerical} and \cite{GDO2020paper}, which indicate that slightly changing $c$ (or $\alpha$, in their model) has a small effect, if any, on optimal strategies.

\begin{corollary}\thlabel{cor:optimalfixedispiecewise}
When switching costs are uniform ($s_{ij}=1$ for $i\neq j$ and $0$ otherwise), there exists $K\in \NN$ and $0=c_0<c_1<c_2<\ldots<c_K<c_{K+1}=\infty$ such that for each segment $[c_i,c_{i+1}]$ there exists a Player~2 strategy to achieve  the minimax value $\tilde{v}$ in static strategies that is suited to all $c$s in the segment.
\end{corollary}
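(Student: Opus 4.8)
The plan is to follow the template of the proof of \thref{cor:optimalstationaryispiecewise}, exploiting the structure already uncovered in the proof of \thref{lem:tildevc_pw_linear_constantS}. The goal is to produce a \emph{finite} list of candidate static strategies, each inducing a payoff that is affine in $c$, and then to realize $\tilde{v}(c)$ as the lower envelope (pointwise minimum) of these finitely many affine functions. Since the lower envelope of finitely many lines is piece-wise linear with finitely many breakpoints, and on each linear piece a single line (hence a single candidate strategy) attains the minimum, the required segmentation $0=c_0<c_1<\cdots<c_K<c_{K+1}=\infty$ together with the $c$-independent strategies would drop out immediately.

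Concretely, I would first recall the decomposition used in \thref{lem:tildevc_pw_linear_constantS}. For each row $i\in\{1,\ldots,m\}$ let $I_i\subseteq\Delta(\{1,\ldots,n\})$ be the compact convex polyhedron of Player~2 mixed actions to which row $i$ is a best response in $A$. Partitioning the outer minimization in Eq.~\eqref{eq:tildevdefinedasminimax} according to which row is a best response gives $\tilde{v}(c)=\min_i\min_{y\in I_i}\bigl(A_i y + c(1-||y||^2)\bigr)$, where $A_i$ is the $i$th row of $A$ and I have used $y^TSy=1-||y||^2$ from Eq.~\eqref{eq:ySy_for_constantS}. For fixed $c\geq 0$ the inner objective $y\mapsto A_i y - c||y||^2$ is concave, so its minimum over the polyhedron $I_i$ is attained at a vertex of $I_i$. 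Collecting all vertices of all the $I_i$ into a single finite set $V$, every $c$ then admits an optimal static strategy lying in $V$.

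The second step is to observe that for each fixed candidate $y\in V$ the quantity $\max_x g(c)(x,y)=(\max_i A_i y)+c(1-||y||^2)$ is affine in $c$ (both the slope $1-||y||^2$ and the intercept $\max_i A_i y$ are independent of $c$). Hence $\tilde{v}(c)=\min_{y\in V}\bigl[(\max_i A_i y)+c(1-||y||^2)\bigr]$ is the minimum of $|V|<\infty$ affine functions, so its graph is the lower envelope of finitely many lines. There are finitely many pairwise intersection points, which I would take as the breakpoints $c_1<\cdots<c_K$, and on each interval $[c_i,c_{i+1}]$ exactly one affine function — equivalently one vertex $y\in V$ — realizes the minimum. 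That single $y$ attains $\tilde{v}(c)$ for every $c$ in the segment, which is the claim (and re-derives the piece-wise linearity asserted in \thref{lem:tildevc_pw_linear_constantS}).

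I expect no substantial obstacle, since \thref{lem:tildevc_pw_linear_constantS} already carries the analytic content and the remaining work is essentially bookkeeping. The one point needing care is the reduction to the finite vertex set $V$: it relies crucially on the uniform–switching–cost identity $y^TSy=1-||y||^2$, which makes the per-row objective concave in $y$ and thereby forces a vertex minimizer. In the general non-uniform case this concavity fails and the minimizer may migrate along a face of $I_i$ as $c$ varies — precisely the behaviour exhibited in \thref{ex_tildevc_nonlinear} — so finiteness, and with it the existence of a single segment-wide optimal strategy, genuinely uses the hypothesis $s_{ij}=1$ for $i\neq j$. A minor additional subtlety is that a vertex of $I_i$ may also belong to other polyhedra $I_{i'}$; this causes no difficulty, because for a fixed $y$ the best-response value $\max_i A_i y$ is well defined and the induced payoff stays affine in $c$ regardless of ties.
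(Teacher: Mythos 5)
Your proposal is correct and takes essentially the same route as the paper: the vertex-of-$I_i$ finiteness argument is precisely the one in the proof of \thref{lem:tildevc_pw_linear_constantS}, and concluding that a single candidate strategy serves on each linearity segment is the same mechanism the paper invokes by declaring the proof ``identical to'' that of \thref{cor:optimalstationaryispiecewise} (a fixed static strategy yields an affine-in-$c$ payoff line lying above $\tilde{v}(c)$ and touching it, hence coinciding with it on the segment). The only cosmetic difference is that you package this as an explicit lower envelope over the finite vertex set $V$, which simultaneously re-derives the piece-wise linearity rather than citing it; your closing caveats (concavity needing $s_{ij}=1$, and ties at shared vertices being harmless) are both accurate.
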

%\begin{proof}
\noindent The proof is identical to the proof of \thref{cor:optimalstationaryispiecewise}.
Note that in general the number of segments differs between $v(c)$ and $\tilde{v}(c)$.
%\hfill\end{proof}

According to \thref{lem:tildevc_pw_linear_constantS} and \thref{cor:optimalfixedispiecewise}, there is a finite set of static  strategies from which an optimal static  strategy can be selected for any $c$.
This set can be large (exponential in $n$), and the above results can be used to search more efficiently for the optimal strategy within the set.
For example, by calculating $y^T S y$ for each $y$ in the set, the strategies in the set can be ordered from largest to smallest.
Since $y^T S y$ are the possible slopes of $c$ in $\tilde{v}(c)$ and since $\tilde{v}(c)$ is concave, if a particular $y$ is found to be optimal for a specific $c$, all $y$s with a higher slope can be excluded from the search domain for all larger $c$s.
A similar approach can be used to find the optimal stationary strategy as a consequence of \thref{lem:proportiesofvc}, keeping in mind that calculating the slope of $c$ for a given stationary strategy is slightly more complicated.

We can improve the bound from \thref{thm:uniformloss} by plugging in the explicit expression of $S$.

\begin{theorem}\thlabel{cor:constantS_bound}
When switching costs are uniform \emph{(}$s_{ij}=1$ for $i\neq j$ and $0$ otherwise\emph{)}, \emph{Player~2}'s loss from limiting himself to static  strategies, $\tilde{v}(c)-v(c)$, is bounded by
\begin{equation}\label{eq:cor:loss_formula}
\Delta(c) = \tfrac{1}{2}\left(1+c(1-\tfrac{1}{n})-v(c)\right).
\end{equation}
\end{theorem}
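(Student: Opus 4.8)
The plan is to derive \eqref{eq:cor:loss_formula} as a specialization of the symmetric bound \thref{obs_symS}, so that essentially all of the work is already done and the only quantitative input needed is the value of $M$ for the uniform cost matrix. First I would record that with uniform switching costs $S=\1-I$ is symmetric, since $s_{ij}=s_{ji}=1$ for all $i\neq j$; hence the asymmetry measure $\Xi=\max_{j,k}|S_{jk}-S_{kj}|$ equals $0$ and \thref{obs_symS} applies directly. Moreover, the proof of \thref{obs_symS} does not merely produce $\tfrac12(1-v(c))+\tfrac{c}{4}M$: after relaxing it establishes the linear form $\tilde{v}(c)-v(c)\le\tfrac12\bigl(1+cM-v(c)\bigr)$, which is precisely the shape of the target $\Delta(c)$. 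Thus the entire task reduces to evaluating $M$.

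The one step that requires computation is therefore $M=\max_{y\in\Delta(\{1,\ldots,n\})} y^{T}Sy$. For this I would invoke the simplification \eqref{eq:ySy_for_constantS}, namely $y^{T}Sy=1-\|y\|^{2}$, so that maximizing $y^{T}Sy$ is the same as minimizing the squared Euclidean norm over the simplex. The minimizer is the uniform distribution $y=(\tfrac1n,\ldots,\tfrac1n)$: by Cauchy--Schwarz, $1=\bigl(\sum_i y_i\bigr)^{2}\le n\sum_i y_i^{2}=n\|y\|^{2}$, with equality exactly at the uniform point, so that $\min_y\|y\|^{2}=\tfrac1n$ and hence $M=1-\tfrac1n$. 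Substituting this value of $M$ into the relaxed symmetric bound $\tilde{v}(c)-v(c)\le\tfrac12\bigl(1+cM-v(c)\bigr)$ yields exactly $\Delta(c)=\tfrac12\bigl(1+c(1-\tfrac1n)-v(c)\bigr)$, which is \eqref{eq:cor:loss_formula}.

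Since the statement is a corollary of \thref{thm:uniformloss} (via \thref{obs_symS}), there is no genuine obstacle to overcome: the substantive content lies only in recognizing that the explicit form $S=\1-I$ collapses the asymmetry term to $\Xi=0$ and reduces the optimization defining $M$ to the elementary minimization of $\|y\|^{2}$ on the simplex. The benefit of carrying out this specialization is that the resulting bound is fully explicit in $c$, $n$, and $v(c)$, requiring no auxiliary optimization, in contrast with the generic bound of \thref{thm:uniformloss} where $M$ and $\Xi$ must be computed game by game.
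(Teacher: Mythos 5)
Your proposal is correct and takes essentially the same route as the paper: the paper likewise observes that the uniform $S$ is symmetric (so $\Xi=0$), uses $y^{T}Sy=1-\|y\|^{2}$ to conclude $M=1-\tfrac{1}{n}$ at the uniform distribution, and substitutes into the general bound. The only cosmetic difference is that you pass through the relaxed form $\tfrac{1}{2}\bigl(1+cM-v(c)\bigr)$ from \thref{obs_symS}, which lands exactly on the stated formula, whereas the paper plugs directly into \thref{thm:uniformloss} (which in fact yields the slightly tighter coefficient $\tfrac{c}{4}\bigl(1-\tfrac{1}{n}\bigr)$ and then implicitly relaxes it to the stated $\tfrac{c}{2}\bigl(1-\tfrac{1}{n}\bigr)$).
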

\begin{proof}
The matrix $S$ is symmetric, so $\Xi=0$.
Moreover, $y^{T} S y=1-||y||^2$ so $M=\max\limits_{y\in \Delta(\{1,...,m\})} (1-||y||^2)$.
Inside the simplex, the maximum is achieved at $y=(\tfrac{1}{n},\ldots,\tfrac{1}{n})$ and is equal to $1-\tfrac{1}{n}$.
Plugging these into \thref{thm:uniformloss}, the bound becomes
$\Delta(c) = \tfrac{1}{2}\left(1+c(1-\tfrac{1}{n})-v(c)\right)$.
\hfill\end{proof}

Interestingly, this bound cannot be improved by harnessing \thref{thm:Bound_using_alpha_mixture} to this case.
The reason is that the condition $s>\hat{s}$ can never hold in this kind of switching-costs matrix, and requires inequality of some kind in the costs (see, for example, \thref{ex_tildevc_nonlinear} and Figure \ref{Fig:BoundOnBeingFixed_with_mixed}).

\begin{obs}\thlabel{cor:NO_Bound_using_alpha_mixture_fixedS}
When switching costs are uniform ($s_{ij}=1$ for $i\neq j$ and $0$ otherwise), the condition $s>\hat{s}$ from \thref{thm:Bound_using_alpha_mixture} never holds. 
\end{obs}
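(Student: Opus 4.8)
The plan is to substitute the explicit form $S=\1-I$ into the definitions of $s$ and $\hat{s}$ and then reduce the desired inequality $s\le\hat{s}$ to an elementary statement about a single coordinate of $y^*$. The key preliminary identity is the one already recorded in Eq.~\eqref{eq:ySy_for_constantS}: for any two mixed actions $y,z\in\Delta(\{1,\ldots,n\})$ one has $y^{T}Sz=y^{T}(\1-I)z=1-y^{T}z$, since $\1 z$ is the all-ones vector scaled by $\sum_j z_j=1$ and $y^{T}Iz=y^{T}z$.

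First I would apply this identity with $y=z=y^{*}$ to obtain $s={y^{*}}^{T}Sy^{*}=1-\|y^{*}\|^{2}$. For $\hat{s}$, I would use that $\bar{y}$ is a \emph{pure} minimax action, say $\bar{y}=e_k$ for some coordinate $k$ (this is exactly the defining property of $\bar{y}$ in \thref{thm:Bound_using_alpha_mixture}, which also yields $\bar{y}^{T}S\bar{y}=0$). Then ${y^{*}}^{T}\bar{y}=\bar{y}^{T}y^{*}=y^{*}_{k}$, and the cross terms give $\hat{s}={y^{*}}^{T}S\bar{y}+\bar{y}^{T}Sy^{*}=(1-y^{*}_{k})+(1-y^{*}_{k})=2(1-y^{*}_{k})$.

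The remaining step is to verify $s\le\hat{s}$ for every choice of $y^{*}$, i.e. $1-\|y^{*}\|^{2}\le 2(1-y^{*}_{k})$. Since $\|y^{*}\|^{2}=\sum_i (y^{*}_i)^{2}\ge (y^{*}_k)^{2}$, it suffices to show $1-(y^{*}_k)^{2}\le 2(1-y^{*}_k)$; factoring the left side as $(1-y^{*}_k)(1+y^{*}_k)$, this reduces to $1+y^{*}_k\le 2$ (equivalently $(1-y^{*}_k)^{2}\ge 0$), which holds because $y^{*}_k\le 1$. Chaining these estimates gives $s\le 1-(y^{*}_k)^{2}\le\hat{s}$, so the strict inequality $s>\hat{s}$ can never occur.

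There is no genuine obstacle in this argument; it is a one-line consequence of $(1-y^{*}_k)^{2}\ge 0$ once the products $y^{T}Sz=1-y^{T}z$ are computed. The only points requiring care are that the reduction exploits $\bar{y}$ being pure (so that ${y^{*}}^{T}\bar{y}$ collapses to the single coordinate $y^{*}_k$) and that both $y^{*}$ and $\bar{y}$ are bona fide probability vectors, which is what makes the simplification $y^{T}Sz=1-y^{T}z$ valid.
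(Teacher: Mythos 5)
Your proof is correct and follows essentially the same route as the paper's: both compute $s=1-\|y^*\|^2$ and $\hat{s}=2(1-\bar{y}\cdot y^*)$ explicitly from the uniform structure of $S$, and then dispose of $s>\hat{s}$ by an elementary inequality. The only difference is cosmetic---the paper maximizes the quadratic $(2\bar{y}-x)\cdot x$ over $\RR^n$ while you bound $\|y^*\|^2\ge (y^*_k)^2$ and factor; both reductions amount to the nonnegativity of $\|\bar{y}-y^*\|^2$.
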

\begin{proof}
In this case, we can compute $s$ and $\hat{s}$ directly.
Following Eq.~\eqref{eq:ySy_for_constantS}, $s=1-||y^*||^2$ and by definition, $\hat{s}=2(1-\bar{y}\cdot y^*)$.\footnote{Since $\bar{y}$ is pure, this dot product is simply the probability of choosing pure action $\bar{y}$ when following mixed $y^*$.}
The condition $s>\hat{s}$ from \thref{thm:Bound_using_alpha_mixture} becomes $1-||y^*||^2>2(1-\bar{y}\cdot y^*)$ or equivalently $(2\bar{y}-y^*)\cdot y^*>1$.

However, the maximum of the function $(2\bar{y}-x)\cdot x$ (over all $\RR^n$) is at $x=\bar{y}$, and is equal to $||\bar{y}||^2=1$.
Thus, $(2\bar{y}-y^*)\cdot y^*\leq 1$ and the desired inequality never holds.
\hfill\end{proof}

\section{Static Strategies in Stochastic Games}\label{s_gen}

So far, we have compared the values guaranteed by stationary and static  strategies in a particular class of games resulting from adding switching costs to normal-form repeated games.
In this section, we consider general stochastic games and study which game structure is needed for our results (or similar) to hold. 
Naturally, we cannot consider the most general stochastic games and must at least assume that the same set of actions is available to the players in each state; otherwise, static  strategies are not well defined.

Let $\Gamma$ be a stochastic game.
We denote by $S$ the finite set of states when $I$ and $J$ are finite action sets of Player~1 and Player~2 (resp.).
The transition function is denoted by $q:S\times I \times J \rightarrow \Delta(S)$ and the normalized payoff function for Player~1 (zero-sum) by $r: S\times I \times J \rightarrow [0,1]$.
We assume that the normalization is tight: $\min\limits_{s,i,j}r(s,i,j)=0$ and $\max\limits_{s,i,j}r(s,i,j)=1$.

As before, a stationary strategy is a strategy that depends at each $t$ on the state, but not on the rest of the history or on $t$ itself.
We denote by $\Sigma$ the set of stationary strategies for Player~1, and by $\Tau$ the set of stationary strategies for Player~2.
A static  strategy is a strategy that dictates the same mixed action in each stage, independent of $t$ or the history.
We denote by $\Sigma_{\f}$ the set of static  strategies for Player~1, and by $\Tau_{\f}$ the set of static  strategies for Player~2.

Without additional requirements, limiting Player~2 to static  strategies can be arbitrarily bad for him, as the following example shows:

\begin{example} An arbitrarily bad payoff in static  strategies relative to stationary strategies. \rm

Consider the following three-state stochastic game.
For simplicity, Player~1 has only one action and has no role in the game.
Arrows represent the deterministic transitions, so $s_R$ is an absorbing state:

\[s_L:
\begin{pmatrix}
1 \circlearrowleft, &
0 \rightarrow \\
\end{pmatrix}
\qquad
s_M:
\begin{pmatrix}
0 \leftarrow, & 0 \rightarrow
 \\
\end{pmatrix}
\qquad
s_R:
\begin{pmatrix}
1 \circlearrowleft, &
1 \circlearrowleft \\
\end{pmatrix}
\]

It is clear that the optimal strategy for Player~2 is the stationary strategy that plays $R$ at $S_L$ and $L$ at $S_M$, since it guarantees him the minimal payoff in this game.
By contrast, let us consider a static  strategy that plays $L$ with probability $x$ (and $R$ otherwise).
If $x<1$, then every trajectory eventually reaches state $s_R$ where the payoff is $1$, whereas if $x=1$, the game remains in $s_L$ and the payoff is $1$.
It follows that the difference between the two values is equal to the maximum difference in payoff.\hfill$\Diamond$
\end{example}

We therefore consider only stochastic games where only Player~2 controls the states and the states correspond to the previously played action.
Hence, $S=J$ and 
every transition satisfies
\begin{equation}
\forall (s,i,j)\in S\times I \times J, \  q(s,i,j)= \delta_{j}. \nonumber
\end{equation}
Informally, the state at stage $t+1$ is the action played by Player~$2$ at stage $t$. 
This class of games is denoted by $\mathcal{G}$.
Moreover, we denote by $\mathcal{G}_{S}$ the set of games in $\mathcal{G}$ such that there exist two matrices $A$ of size $|I| \times |J|$, and $S$ of size $|J| \times |J|$  such that
\begin{equation}
\forall (j',i,j)\in S\times I \times J, \ r(j',i,j)=A_{i,j}+S_{j',j}. \nonumber
\end{equation}

Thus, Sections \ref{s_model} and \ref{s_results} dealt only with games from $\mathcal{G}_{S}$ (with the additional constraints $s_{ij}\geq 0$ and $s_{ii}=0$).
In \thref{exmple_SGnotinG_S} we show that indeed $\mathcal{G}\neq\mathcal{G}_S$.
The following characterization of $\mathcal{G}_{S}$ can be used to see whether a game in $\mathcal{G}$ is indeed in $\mathcal{G}_S$ and how to construct the appropriate matrices $A$ and $S$.
The intuition behind this result is that there must be no interaction between the actions of the maximizing player and the states.

\begin{proposition}\thlabel{prop:G_neq_Gs}
Let $\Gamma$ be in $\mathcal{G}$, then $\Gamma$ is in $\mathcal{G}_{S}$ if and only if
\begin{equation}
\forall i,i' \in I, \ \forall j,j',j'' \in J, \  r(j',i,j)-r(j'',i,j)=r(j',i',j)-r(j'',i',j).
\end{equation}
\end{proposition}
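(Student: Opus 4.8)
The plan is to prove both implications by directly relating the additive structure $r(j',i,j)=A_{i,j}+S_{j',j}$ to the stated ``no interaction'' identity. The statement is really an additive (two-way ANOVA type) decomposition of $r$, and the identity is precisely the vanishing of the row--state interaction term, so the work splits into a trivial substitution one way and an explicit reconstruction the other way.

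For the ``only if'' direction, I would assume $\Gamma\in\mathcal{G}_{S}$, so that $r(j',i,j)=A_{i,j}+S_{j',j}$ for some matrices $A,S$, and simply substitute. For any $i,i'\in I$ and $j,j',j''\in J$,
\[
r(j',i,j)-r(j'',i,j)=(A_{i,j}+S_{j',j})-(A_{i,j}+S_{j'',j})=S_{j',j}-S_{j'',j},
\]
which does not involve $i$ at all; the same computation carried out with $i'$ in place of $i$ yields the identical right-hand side, proving the required equality.

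For the ``if'' direction, the idea is to reconstruct $A$ and $S$ from $r$ by fixing a reference state. I would fix an arbitrary $j_0\in J$ and set
\[
A_{i,j}:=r(j_0,i,j),\qquad S_{j',j}:=r(j',i,j)-r(j_0,i,j),
\]
where in the definition of $S_{j',j}$ the row index $i$ is chosen arbitrarily. The one point that needs justifying --- and the only place the hypothesis is used --- is that $S_{j',j}$ is well defined, i.e. independent of the chosen $i$. This is exactly the stated identity applied with $j''=j_0$, which gives $r(j',i,j)-r(j_0,i,j)=r(j',i',j)-r(j_0,i',j)$ for all $i,i'$, so the difference is the same for every row. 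Once well-definedness is established, I verify the decomposition by direct substitution:
\[
A_{i,j}+S_{j',j}=r(j_0,i,j)+\bigl(r(j',i,j)-r(j_0,i,j)\bigr)=r(j',i,j),
\]
hence $\Gamma\in\mathcal{G}_{S}$.

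There is no serious obstacle here: the only subtlety is the well-definedness of $S$, which the hypothesis supplies directly, while everything else is a one-line substitution. I would also note that the constructed matrices have the correct sizes ($A$ is $|I|\times|J|$ and $S$ is $|J|\times|J|$) and that the definition of $\mathcal{G}_{S}$ in this section imposes no sign or zero-diagonal constraints on $S$, so the explicit $A$ and $S$ above certify membership without any further verification.
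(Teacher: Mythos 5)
Your proposal is correct and follows essentially the same route as the paper: the ``only if'' direction is the identical substitution, and the ``if'' direction is the same reconstruction relative to a reference state $j_0$, producing exactly the same matrices ($A_{i,j}=r(j_0,i,j)$ and $S$ as the difference from the reference state). The only cosmetic difference is where the hypothesis is invoked --- you verify that $S$ is well defined (independent of the row $i$), while the paper fixes a reference row $i_0$ in the definition of $S$ and instead verifies that the residual $A$ is independent of the state $j'$; these are dual phrasings of the same one-line check.
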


\begin{proof}
Let us first check that if $\Gamma$ is in $\mathcal{G}_{S}$, then it satisfies the equation. Let $i \in I$ and $j,j',j'' \in J$, then
\begin{equation}
r(j',i,j)-r(j'',i,j)=(A_{ij}+S_{j'j})-(A_{ij}+S_{j''j})=S_{j'j}-S_{j''j}. \nonumber
\end{equation}
Hence, it is independent of $i$ and therefore for two actions $i$ and $i'$ of Player~1, the two differences are equal.

Let us now prove the converse. 
Let $r$ be a payoff function of a stochastic game in $\mathcal{G}$ that satisfies
\begin{equation}
\forall i,i' \in I, \ \forall j,j',j'' \in J, \  r(j',i,j)-r(j'',i,j)=r(j',i',j)-r(j'',i',j). \nonumber
\end{equation}
Fix $i_0\in I, j_0\in J$. Define for every $j\in J$ and for every $j'\in J$,
\begin{equation}
S_{jj'}=r(j',i_0,j)-r(j_0,i_0,j).\nonumber
\end{equation}
Notice that $S_{jj_0}=0$ for every $j\in J$.  Moreover for every $(j',i,j) \in S\times I\times J$, let 
\begin{equation}
B_{ijj'}=r(j',i,j)-S_{jj'}=r(j',i,j)-r(j',i_0,j)+r(j_0,i_0,j). \nonumber
\end{equation}

To conclude, we need to check that $B_{ijj'}$ does not depend on $j'$ and is therefore equal to some $A_{ij}$.
To do so, letting $j'_0$ and $j'_1$, we have
\begin{align}
& B_{ijj'_0}-B_{ijj'_1} \nonumber\\
&=[r(j'_0,i,j)-r(j'_0,i_0,j)+r(j_0,i_0,j)]-[r(j'_1,i,j)-r(j'_1,i_0,j)+r(j_0,i_0,j)] \nonumber\\
& =[r(j'_0,i,j)-r(j'_1,i,j)]-[r(j'_0,i_0,j)-r(j'_1,i_0,j)]+[r(j_0,i_0,j)-r(j_0,i_0,j)] \nonumber\\
& =0. \nonumber
\end{align}
Hence the result.
\hfill\end{proof}

Given a game $\Gamma$ in $\mathcal{G}$, a pair of stationary strategies $(\sigma,\tau)\in \Sigma \times \Tau$, and an initial state $s\in S$, we define
\begin{equation}
\gamma(s,\sigma,\tau)=\E_{s,\sigma,\tau}\left( \liminf_{T\rightarrow +\infty}\frac{1}{T} \sum_{t=1}^{T} r(s_t,i_t,j_t) \right),\nonumber
\end{equation}
and we can define $\forall s\in S$,
\begin{equation}
v_{\Gamma}(s)= \inf\limits_{\tau \in \Tau} \sup\limits_{\sigma\in \Sigma} \gamma(s,\sigma,\tau).\nonumber
\end{equation}
which is the minimax value of Player~2 when using stationary strategies.
Note that the value is independent of the original state due to the structure of the transitions, so it will be dropped hereafter.

We also consider
\begin{equation}
\tilde{v}_{\Gamma}= \inf\limits_{\tau_f \in \Tau_f} \sup\limits_{\sigma\in \Sigma} \gamma(\sigma_{\f},\tau), \nonumber
\end{equation}
which is the minimax value of Player~2 when Player~2 is restricted to static  strategies.
Note that in this case, even when Player~2 uses a static  strategy, the best response of Player~1 might be stationary and not static.
\begin{example}A stochastic game where the best response of Player~1 to a  static  strategy of Player~2 is not static  \thlabel{exmple_SGnotinG_S}\rm

Consider the following two-state stochastic game
\[s_L:
\begin{pmatrix}
 1 \circlearrowleft & 0 \rightarrow  \\
 1 \circlearrowleft & 1  \rightarrow\\
\end{pmatrix}
\qquad
s_R:
\begin{pmatrix}
1 \leftarrow & 1\circlearrowleft \\
0 \leftarrow & 1 \circlearrowleft \\
\end{pmatrix}
\]
and suppose that Player~2 plays the static  non-pure action $[x(L),1-x (L)]$ for $x\in (0,1)$.
The expected payoff of $T$ in state $s_L$ is $x<1$ and the expected payoff of $B$ in state $s_R$ is $1-x<1$.
Hence, each of these actions is dominated by the other action in the state and the best response for Player~1 is to play $B$ in state $s_L$ and $T$ in state $s_R$, with an overall payoff of $1$.
Any static  strategy by Player~1 would result in a payoff strictly smaller than $1$ (which is worse for him, as the maximizer).

In the model presented in Section \ref{s_model}, the best response to a static  strategy is static.
Since that is not true here, this example shows that $\mathcal{G}\neq\mathcal{G}_{S}$ and that the payoff function of this game cannot be written as two additive payoffs, one corresponding to a normal-form game and another to the switching costs (see also \thref{prop:G_neq_Gs}).
\hfill $\Diamond$
\end{example}

Our main result generalizes  \thref{thm:uniformloss} for the games in $\mathcal{G}$, rather than just in  $\mathcal{G}_S$.

\begin{theorem}\thlabel{prop:genquart}
Fix $\Gamma\in\mathcal{G}$. % with a symmetric payoff function.
The minimax value $\tilde{v}_{\Gamma}$ \emph{Player~2} can guarantee in static  strategies is at least a quarter of the value $v_{\Gamma}$ he can guarantee in stationary ones: 
\begin{equation}
\tilde{v}_{\Gamma} -v_{\Gamma} \leq \Delta_\Gamma= \frac{3}{4}(1-v_{\Gamma}).
\end{equation}
This equation is equivalent to the following
\begin{equation}
\tfrac{1-\tilde{v}_{\Gamma}}{1-v_{\Gamma}} \geq \frac{1}{4}.
\end{equation}
\end{theorem}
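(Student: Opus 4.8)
The plan is to generalize the argument behind \thref{thm:uniformloss}: extract from an optimal stationary strategy a single ``good state'' in which Player~2 can hold Player~1 down to $v_\Gamma$, and then build a static strategy that spends at least half its time in that state. The one structural feature of $\mathcal{G}$ that makes this possible is that transitions depend only on Player~2's action (the new state is exactly Player~2's last action, $q(s,i,j)=\delta_j$). Consequently, \emph{Player~1's action never influences the future}, so against any fixed strategy of Player~2 his best response is \emph{myopic}: in each state he simply maximizes the expected current reward. This myopic-response property will be used twice, once for the stationary optimum and once for the constructed static strategy.

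\textbf{Extracting a good state.} Since $\Gamma$ is a single-controller game (Player~2 controls all transitions), the results cited from \cite{filar1980algorithms,filar1984matrix} give an optimal stationary $\tau^*\in\Tau$ achieving $v_\Gamma$. Playing $\tau^*$ fixes a Markov chain on $S$ with transition probabilities $\tau^*(s)_{s'}$; let $\mu$ be an invariant distribution supported on a recurrent class. Because Player~1 responds myopically, the long-run average payoff equals
\[
v_\Gamma=\sum_{s}\mu_s\,g(s),\qquad g(s)=\max_{i\in I}\sum_{j\in J}\tau^*(s)_j\,r(s,i,j).
\]
As $v_\Gamma$ is a convex combination of the numbers $g(s)$, there is a state $j^*$ with $g(j^*)\le v_\Gamma$. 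Writing $y^*:=\tau^*(j^*)$, this says exactly that for every $i\in I$,
\[
\sum_{j\in J}y^*_j\,r(j^*,i,j)\le v_\Gamma,\tag{$\star$}
\]
the analogue of the ``good state'' used in \thref{thm:uniformloss}.

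\textbf{The static strategy and the bound.} Now let Player~2 play, i.i.d.\ in every period, the static mixed action $\tilde y:=\tfrac12\delta_{j^*}+\tfrac12 y^*$. Under $\tilde y$ the state is distributed as $\tilde y$ in every stage, so it equals $j^*$ with probability $\tilde y_{j^*}\ge\tfrac12$. Player~1 again best responds myopically, hence
\[
\tilde v_\Gamma\le \sum_{s}\tilde y_s\max_{i\in I}\sum_{j\in J}\tilde y_j\,r(s,i,j).
\]
In state $j^*$ the bracket is $\tfrac12 r(j^*,i,j^*)+\tfrac12\sum_j y^*_j r(j^*,i,j)\le \tfrac12+\tfrac12 v_\Gamma$ by $r\le 1$ and $(\star)$; in every other state it is at most $1$. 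Therefore
\[
\tilde v_\Gamma\le \tilde y_{j^*}\Big(\tfrac12+\tfrac12 v_\Gamma\Big)+(1-\tilde y_{j^*})
= 1-\tfrac{\tilde y_{j^*}}{2}(1-v_\Gamma)\le 1-\tfrac14(1-v_\Gamma),
\]
using $\tilde y_{j^*}\ge\tfrac12$ and $1-v_\Gamma\ge 0$. Rearranging gives both forms of the claim, $\tilde v_\Gamma-v_\Gamma\le\tfrac34(1-v_\Gamma)$ and $\tfrac{1-\tilde v_\Gamma}{1-v_\Gamma}\ge\tfrac14$.

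\textbf{Main obstacle.} The delicate step is the extraction of $(\star)$: one must justify that the value against $\tau^*$ decomposes as the $\mu$-weighted average $\sum_s\mu_s g(s)$ with $g$ the \emph{myopic} best-response value. This rests entirely on transitions being independent of Player~1 and on the $\liminf$-average reducing to the stationary average on a recurrent class; some care is needed if the induced chain has several recurrent classes, which is handled by working inside one class and invoking the state-independence of the value. If one prefers not to assume the infimum defining $v_\Gamma$ is attained, the same argument with an $\varepsilon$-optimal stationary $\tau^*$ yields $g(j^*)\le v_\Gamma+\varepsilon$ and the bound follows by letting $\varepsilon\to 0$.
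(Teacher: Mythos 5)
Your proof is correct and follows essentially the same route as the paper's: both extract a state $j^*$ and a mixed action $y^*$ with $r(j^*,i,y^*)\le v_\Gamma$ for all $i$, have Player~2 play the static mixture $\tfrac{1}{2}\delta_{j^*}+\tfrac{1}{2}y^*$, and bound every stage payoff outside the event (state $j^*$, action drawn from $y^*$) by $1$, yielding $\tilde{v}_\Gamma\le\tfrac{1}{4}v_\Gamma+\tfrac{3}{4}$. The only differences are minor: you justify the existence of the good state constructively, via the invariant distribution of the Markov chain induced by an optimal stationary strategy and myopic best responses, where the paper uses a one-line contradiction argument, and you omit the five-action example the paper includes to show tightness, which the stated inequality does not require.
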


\begin{proof}
Let $v_{\Gamma}$ be the value of the game (in stationary strategies).
There exists a state $j^*\in S$ and a mixed action $y^*\in \Delta (J)$ such that for all $i \in I$ we have $r(j^*,i,y^*)\leq v_{\Gamma}$ (otherwise, the payoff of Player~1 in each state can exceed $v_{\Gamma}$, as can the total value).
Consider the static  strategy $\tau_f$ of Player~2, who with probability $0.5$ chooses action $j^*$ and otherwise chooses an action according to mixed action $y^*$.
The payoff of this strategy against the stationary strategy $\sigma(s)$ of Player~1 is
\begin{align}
\tfrac{1}{4}r(j^*,\sigma(j^*),y^*) & +\tfrac{1}{4}r(j^*,\sigma(j^*),j^*)
+\tfrac{1}{4}\sum\limits_{s\in S}y^*(s)r(s,\sigma(s),j^*)+\tfrac{1}{4}\sum\limits_{s\in S}y^*(s)r(s,\sigma(s),y^*) \nonumber\\ 
&\leq \tfrac{1}{4} v_{\Gamma}+\tfrac{3}{4},\nonumber
% &\tfrac{1}{4}&r(i^*,x^*,\tau(i^*))\geq \tfrac{1}{4}\underline{v}_{\Gamma}, 
\end{align}
where we bound the payoff by the maximal payoff $1$.
It follows that
\begin{equation}
\tilde{v}_{\Gamma} \leq \tfrac{1}{4} v_{\Gamma}+ \tfrac{3}{4}.\nonumber
\end{equation}
Therefore, we obtain by subtracting $v_{\Gamma}$ on both sides that
\begin{equation}
\tilde{v}_{\Gamma} -v_{\Gamma} \leq \frac{3}{4}(1-v_{\Gamma}),\nonumber
\end{equation}
or by subtracting $1$ on both sides and then dividing by $v_{\Gamma}-1$ that
\begin{equation}
\tfrac{1-\tilde{v}_{\Gamma}}{1-v_{\Gamma}} \geq \frac{1}{4}.\nonumber
\end{equation}
The gain obtained by a static  strategy (compared to the maximal payoff $1$) is at least $\tfrac{1}{4}$ of the gain obtained by a stationary strategy.

To finalize the proof, we use an example to show that this bound is indeed tight. Suppose $J=\{0,\ldots,4\}$, $I=\{1\}$ and the payoff function is $1$, except for  $r(i,1,i+1)=0$ and $r(4,0,1)=0$.
The optimal stationary strategy is to choose in state $i$  action $i+1$ (modulo $5$) with $v_{\Gamma}=0$.
Let $\sigma_f$ be a static  strategy that chooses action $i$ with probability $x_i$.
Then the payoff is $1-x_0x_1-x_1x_2-\ldots-x_4x_0$ and it is straightforward to verify that the minimum of this function in domain $x_i\geq 0$ and $\sum\limits_{i=0}^4 x_i=1$ is $\tfrac{3}{4}$.
\hfill\end{proof}

The above result gives a tight bound on the static  value compared to the stationary value, as well as offering a strategy that attains it in some cases.
For specific games, the bound can be tighter and the optimal static  strategy can be different.
A particular set of games where the bound is different are games where $\vert J \vert \leq 4$.
In this case, the difference between the values is at most $\Delta_\Gamma=\left(1-\tfrac{1}{\vert J \vert}\right)(1-v_\Gamma)$ provided that Player~2 can guarantee to pay at most $v_\Gamma$ in each state as a one-shot game.
This is why the example in the proof of \thref{prop:genquart} has $5$ actions.

\begin{comment}

This bound can be obtained by using the static  strategy $\tfrac{1}{\vert J \vert} x_j^*$ where $x_j^*$ is the mixed strategy that guarantees to pay at most $v_{\Gamma}$ in the one-shot game that consists of solely state $j$.
Here, the probability of being in state $s\in S$ is $\sum\limits_{j\in J} \tfrac{1}{\vert J \vert} x_j^*(s)$, and the probability of choosing the optimal mixed action is $\tfrac{1}{J}$.
When playing differently, the payoff can be bounded by $1$ and the total payoff can be bounded by:
\begin{eqnarray}
\sum\limits_{s\in S} \sum\limits_{j\in J} \tfrac{1}{\vert J \vert} x_j^*(s) \left(\tfrac{1}{\vert J \vert} v_{\Gamma} + \tfrac{\vert J \vert -1}{\vert J \vert} \right)=
\tfrac{v_{\Gamma}}{\vert J \vert^2}\sum\limits_{s\in S} \sum\limits_{j\in S}  x_j^*(s)+\tfrac{\vert J \vert -1}{\vert J \vert^2}\sum\limits_{s\in S} \sum\limits_{j\in S}  x_j^*(s) =
\tfrac{v_{\Gamma}}{\vert J \vert} + \tfrac{\vert J \vert -1}{\vert J \vert}.\nonumber
\end{eqnarray}
Hence, $\tilde{v}_{\Gamma}\leq \tfrac{v_{\Gamma}}{\vert J \vert} + \tfrac{\vert J \vert -1}{\vert J \vert}$ and after subtracting $v_{\Gamma}$ from both sides of the inequality we obtain the desired result.
This is the reason why the example in the proof of \thref{prop:genquart} has $5$ actions.

\end{comment}

\section{Conclusions}\label{s_conclusions}
In this paper, we analyze zero-sum games with switching costs and compare the value under optimal strategies (that are stationary) to an approximate solution with time-independent strategies (``static  strategies'').
Our main findings are summarized in a series of theorems which provide bounds on the difference between the two types of strategies for different scenarios and under various assumptions.
Our bounds can be used to assess whether the additional computational complexity is worth the extra payoff gain or, symmetrically, assess the price of playing sub-optimal but simple strategies when stationary ones are forbidden.

In addition, we study and characterize the value in stationary strategies as a function of the ratio between switching costs and stage payoff ($c$).
Our key finding is that this function, $v(c)$, is piece-wise linear.
This implies that the solutions are robust in the sense that an optimal stationary strategy for a particular $c$ is also optimal for all the $c$s in its segment.
Hence, knowing the exact value of $c$ is not essential to play optimally and the strategy should not be changed in response to small changes in $c$.

%One generalization which we did include in this paper is the comparison of static  and stationary strategies for general stochastic games.
We generalize our work on repeated games to compare static  and stationary strategies for general stochastic games.
This generalization allows us to consider a wider range of games with more elaborate switching-cost structures.
Still, we are able to propose a bound on the price of being history-dependent.
This generalization is unique to our paper, and considers the widest possible set of repeated zero-sum games to which the comparison between history-dependent and stationary strategies is applicable.
%This generalization allows more possibilities for the switching costs structure, but we are still able to propose a bound on the price of being history-independent.
%This generalization is unique to our paper and considers the widest possible set of repeated zero-sum games to which switching costs can be applied.

Our analysis can serve as a basis for future work regarding non-zero-sum games since, when only one player's payoffs are considered, $v(c)$ is his minimax value.
We therefore portray the behavior of the individually rational payoffs of the players, which serves as a lower bound for the possible equilibrium payoff via a version of a Folk Theorem suitable for this model.
A particularly interesting model is that where switching costs paid to a ``bank'' are added to a zero-sum stage game.
The addition of switching costs transforms the game into a non-zero-sum game and facilitates cooperation in an otherwise zero-sum game.

Other possible extensions are games where the switching costs depend on the outcome of the previous round and not only on the action played.
For example, \cite{wang2014social} report that when playing rock-paper-scissors, players tend to repeat an action that led to a victory.
This can be modeled theoretically as a switching cost added to all the other actions. 
Although the model is significantly different, some of the techniques and observations of this paper might be useful in its analysis.
This and the above non-zero-sum models are left for future research.

\bibliographystyle{plainnat}
\bibliography{SC_bib}

\begin{thebibliography}{29}
\providecommand{\natexlab}[1]{#1}
\providecommand{\url}[1]{\texttt{#1}}
\expandafter\ifx\csname urlstyle\endcsname\relax
  \providecommand{\doi}[1]{doi: #1}\else
  \providecommand{\doi}{doi: \begingroup \urlstyle{rm}\Url}\fi

\bibitem[Akerlof and Yellen(1985{\natexlab{a}})]{akerlof1985can}
G.A. Akerlof and J.L. Yellen.
\newblock Can small deviations from rationality make significant differences to
  economic equilibria?
\newblock \emph{The American Economic Review}, 75\penalty0 (4):\penalty0
  708--720, 1985{\natexlab{a}}.

\bibitem[Akerlof and Yellen(1985{\natexlab{b}})]{akerlof1985near}
G.A. Akerlof and J.L. Yellen.
\newblock A near-rational model of the business cycle, with wage and price
  inertia.
\newblock \emph{The Quarterly Journal of Economics}, 100:\penalty0 823--838,
  1985{\natexlab{b}}.

\bibitem[Arapostathis et~al.(1993)Arapostathis, Borkar,
  Fern{\'a}ndez-Gaucherand, Ghosh, and Marcus]{arapostathis1993discrete}
A.~Arapostathis, V.S. Borkar, E.~Fern{\'a}ndez-Gaucherand, M.K. Ghosh, and S.I.
  Marcus.
\newblock Discrete-time controlled markov processes with average cost
  criterion: a survey.
\newblock \emph{SIAM Journal on Control and Optimization}, 31\penalty0
  (2):\penalty0 282--344, 1993.

\bibitem[Beggs and Klemperer(1992)]{beggs1992multi}
A.~Beggs and P.~Klemperer.
\newblock Multi-period competition with switching costs.
\newblock \emph{Econometrica: Journal of the Econometric Society}, pages
  651--666, 1992.

\bibitem[Bomze et~al.(2020)Bomze, Schachinger, and Weibull]{bomze2020does}
I.~Bomze, W.~Schachinger, and J.~Weibull.
\newblock Does moral play equilibrate?
\newblock \emph{Economic Theory}, pages 1--11, 2020.

\bibitem[Caruana et~al.(2007)Caruana, Einav, and
  Quint]{caruana2007multilateral}
G.~Caruana, L.~Einav, and D.~Quint.
\newblock Multilateral bargaining with concession costs.
\newblock \emph{Journal of Economic Theory}, 132\penalty0 (1):\penalty0
  147--166, 2007.

\bibitem[Chakrabarti(1990)]{chak90characterizations}
S.K. Chakrabarti.
\newblock Characterizations of the equilibrium payoffs of inertia supergames.
\newblock \emph{Journal of Economic Theory}, 51\penalty0 (1):\penalty0
  171--183, 1990.

\bibitem[Filar and Schultz(1983)]{filar1983interactive}
J~Filar and T~Schultz.
\newblock Interactive solutions for the travelling inspector model and related
  problems.
\newblock \emph{Operations Research Group Report Series}, pages 83--06, 1983.

\bibitem[Filar(1980)]{filar1980algorithms}
J.A. Filar.
\newblock Algorithms for solving some undiscounted stochastic games[ph. d.
  thesis].
\newblock 1980.

\bibitem[Filar and Raghavan(1984)]{filar1984matrix}
J.A. Filar and T.E.S. Raghavan.
\newblock A matrix game solution of the single-controller stochastic game.
\newblock \emph{Mathematics of Operations Research}, 9\penalty0 (3):\penalty0
  356--362, 1984.

\bibitem[Filar and Schultz(1986)]{filar1986traveling}
JA~Filar and Todd~A Schultz.
\newblock The traveling inspector model.
\newblock \emph{Operations-Research-Spektrum}, 8\penalty0 (1):\penalty0 33--36,
  1986.

\bibitem[Filar(1985)]{filar1985player}
Jerzy Filar.
\newblock Player aggregation in the traveling inspector model.
\newblock \emph{IEEE Transactions on Automatic Control}, 30\penalty0
  (8):\penalty0 723--729, 1985.

\bibitem[J{\o}rgensen et~al.(2010)J{\o}rgensen, Mart{\'\i}n-Herr{\'a}n, and
  Zaccour]{jorgensen2010dynamic}
Steffen J{\o}rgensen, Guiomar Mart{\'\i}n-Herr{\'a}n, and Georges Zaccour.
\newblock Dynamic games in the economics and management of pollution.
\newblock \emph{Environmental Modeling \& Assessment}, 15\penalty0
  (6):\penalty0 433--467, 2010.

\bibitem[Lipman and Wang(2009)]{lipman2009switching}
B.~L. Lipman and R.~Wang.
\newblock Switching costs in infinitely repeated games.
\newblock \emph{Games and Economic Behavior}, 66\penalty0 (1):\penalty0
  292--314, 2009.

\bibitem[Lipman and Wang(2000)]{lipman2000switching}
B.L Lipman and R.~Wang.
\newblock Switching costs in frequently repeated games.
\newblock \emph{Journal of Economic Theory}, 93\penalty0 (2):\penalty0
  149--190, 2000.

\bibitem[Liuzzi et~al.(2021)Liuzzi, Locatelli, Piccialli, and
  Rass]{GDO2020paper}
G.~Liuzzi, M.~Locatelli, V.~Piccialli, and S.~Rass.
\newblock Computing mixed strategies equilibria in presence of switching costs
  by the solution of nonconvex qp problems.
\newblock \emph{Comput Optim Appl}, 2021.
\newblock \doi{https://doi.org/10.1007/s10589-021-00282-7}.

\bibitem[O'Neill(1994)]{o1994game}
Barry O'Neill.
\newblock Game theory models of peace and war.
\newblock \emph{Handbook of game theory with economic applications},
  2:\penalty0 995--1053, 1994.

\bibitem[Raghavan(2003)]{raghavan2003finite}
TES Raghavan.
\newblock Finite-step algorithms for single-controller and perfect information
  stochastic games.
\newblock In \emph{Stochastic games and applications}, pages 227--251.
  Springer, 2003.

\bibitem[Raghavan and Syed(2002)]{raghavan2002computing}
TES Raghavan and Zamir Syed.
\newblock Computing stationary nash equilibria of undiscounted
  single-controller stochastic games.
\newblock \emph{Mathematics of Operations Research}, 27\penalty0 (2):\penalty0
  384--400, 2002.

\bibitem[Rass and Rainer(2014)]{rass2014numerical}
S.~Rass and B.~Rainer.
\newblock Numerical computation of multi-goal security strategies.
\newblock In \emph{International Conference on Decision and Game Theory for
  Security}, pages 118--133. Springer, 2014.

\bibitem[Rass et~al.(2017)Rass, K{o}nig, and Schauer]{rass2017defending}
S.~Rass, S.~K{o}nig, and S.~Schauer.
\newblock Defending against advanced persistent threats using game-theory.
\newblock \emph{PloS one}, 12\penalty0 (1):\penalty0 e0168675, 2017.

\bibitem[Schoenmakers et~al.(2008)Schoenmakers, Flesch, Thuijsman, and
  Vrieze]{schoenmakers2008repeated}
G.~Schoenmakers, J.~Flesch, F.~Thuijsman, and O.J. Vrieze.
\newblock Repeated games with bonuses.
\newblock \emph{Journal of optimization theory and applications}, 136\penalty0
  (3):\penalty0 459--473, 2008.

\bibitem[Shapley(1953)]{Shapley1953stochastic}
L.S. Shapley.
\newblock Stochastic games.
\newblock \emph{Proceedings of the National Academy of Sciences}, 39\penalty0
  (10):\penalty0 1095--1100, 1953.

\bibitem[Shapley and Snow(1950)]{shapley1950basic}
L.S. Shapley and R.N. Snow.
\newblock Basic solutions of discrete games, annals of mathematics studies.
\newblock 1950.

\bibitem[Shy(2019)]{shy2019colonel}
O.~Shy.
\newblock Colonel blotto games with switching costs.
\newblock \emph{Available at SSRN 3306880}, 2019.

\bibitem[Stern(1975)]{stern1975stochastic}
M.A. Stern.
\newblock \emph{On Stochastic Games with Limiting Average Payoff}.
\newblock PhD thesis, University of Illinois at Chicago, 1975.

\bibitem[Wachter et~al.(2018)Wachter, Rass, and K{o}nig]{wachter2018security}
J.~Wachter, S.~Rass, and S.~K{o}nig.
\newblock Security from the adversarys inertia--controlling convergence speed
  when playing mixed strategy equilibria.
\newblock \emph{Games}, 9\penalty0 (3):\penalty0 59, 2018.

\bibitem[Wang et~al.(2014)Wang, Xu, and Zhou]{wang2014social}
Z.~Wang, B.~Xu, and H.J. Zhou.
\newblock Social cycling and conditional responses in the rock-paper-scissors
  game.
\newblock \emph{Scientific reports}, 4:\penalty0 5830, 2014.

\bibitem[Yavuz and Jeffcoat(2007)]{yavuz2007analysis}
Mesut Yavuz and David Jeffcoat.
\newblock An analysis and solution of the sensor scheduling problem.
\newblock In \emph{Advances in Cooperative Control and Optimization}, pages
  167--177. Springer, 2007.

\end{thebibliography}

\newpage
\appendix

\section{Average Cost Optimality Equation (ACOE)}\label{app_acoe}
In this Appendix we show how the value of the game and the optimal stationary strategies can be computed using the Average Cost Optimality Equation.
We achieve this by solving Example \ref{ex_123} in detail.
More about this method and proofs can be found in \cite{arapostathis1993discrete}, the references within, and similar papers.

In general terms, the algorithm consists of the following steps:
\begin{enumerate}
\item ``Guess'' a stationary strategy for Player~2.
Based on the ideas of \thref{lem:proportiesofvc}, this stationary strategy should in each state play a mixed action that keeps Player~1 indifferent among several of his actions and makes him prefer them over the rest in the one-shot game $A$.
This well-defines a finite set of actions to consider and ensures that the algorithm eventually terminates.

\item Player~1 does not control transitions, so once the strategy of Player~2 is determined, he can best respond by myopically best responding in each state.
This can be achieved even by pure actions.
Fix such a best response.

\item Let $q_{ij}$ be the probability of Player~2 playing action $j$ in state $s_i$, let $r_i$ be the value of one-shot game $A$ when the players use their mixed actions chosen for state $s_i$, let $v_i$ be the continuation payoff of the repeated game starting from state $s_i$ and let $\gamma$ be the value of the game (Eq.\eqref{eq:gamma_liminf}).
The ACOE connects these variables in the following manner:
\begin{equation}
\gamma + v_i=r_i +c\sum\limits_j q_{ij}s_{ij} + \sum\limits_j q_{ij} v_j,
\end{equation} 
where $s_{ij}$ is an element in the matrix $S$.

\item The above equation actually represents $n$ linear equations (one for each state) with $n+1$ variables: $\gamma$ and the $v_i$s.
W.l.o.g. set $v_1=0$ and solve the remaining equations to obtain the value of the game $\gamma$ and the continuation payoffs $v_i$.

\item Find the real mixed actions of Player~1.
To do so, treat each state as a one-shot game by writing the switching costs and the continuation payoffs in the payoff matrix.
Check that Player~1 has a best response to Player~2's action in this game that keeps Player~2 indifferent among the actions he plays and makes him prefer them over all other actions.

\item If it is possible to find such a strategy for Player~1, this pair of strategies is optimal and $\gamma$ is the value of the game.
Otherwise, restart the algorithm with another guess.
\end{enumerate}

A convenient way to see that the algorithm is finite is to assume the initial guess is a stationary strategy that mixes all the actions of Player~2 (in each state) and if it fails -- use another that does not use one pure action, then two pure actions and so on.
Eventually, the process terminates as there are only finitely many possible pure actions to stop considering.

We now apply this algorithm to the zero-sum game $A=\left(\begin{smallmatrix} 1 & 0 & 0\\ 0 & 2 & 0 \\ 0 & 0 & 3\end{smallmatrix}\right)$ and the switching-costs matrix $S=\left(\begin{smallmatrix} 0 & 1 & 1\\ 1 & 0 & 1 \\ 1 & 1 & 0 \end{smallmatrix}\right)$.
Without switching costs ($c=0$), the optimal strategy for Player~2 is $\tfrac{1}{11}(6,3,2)$ and the value is $v=\tfrac{6}{11}$.
For clarity, we denote the actions of Player~2 as $L$ (left), $M$ (middle) and $R$ (right), and the state after action $i\in\{L,M,R\}$ was played by $s_i$.

Suppose that Player~2 plays $\tfrac{1}{11}(6,3,2)$ in all states.
This makes Player~1 indifferent among all his actions, and we can assume that the best response is simply playing the first row.
This results in $r_i=\tfrac{6}{11}$ and $(q_{iL},q_{iM},q_{iR})=\tfrac{1}{11}(6,3,2)$ for all $i$.
The three ACOE equations are
\begin{eqnarray}
\gamma + v_L&=&\tfrac{6}{11} +c\tfrac{5}{11} + \tfrac{6}{11}v_L+\tfrac{3}{11}v_M+\tfrac{2}{11}v_R, \nonumber \\
\gamma + v_M&=&\tfrac{6}{11} +c\tfrac{8}{11} + \tfrac{6}{11}v_L+\tfrac{3}{11}v_M+\tfrac{2}{11}v_R ,\nonumber \\
\gamma + v_R&=&\tfrac{6}{11} +c\tfrac{9}{11} + \tfrac{6}{11}v_L+\tfrac{3}{11}v_M+\tfrac{2}{11}v_R . \nonumber
\end{eqnarray}
Setting $v_L=0$ and solving these equations leads to $v_M=\tfrac{3c}{11},v_R=\tfrac{4c}{11}$ and $\gamma=\tfrac{6}{11}+\tfrac{72}{121}c$.
Our final step is to add these continuation payoffs to the payoff matrix of each state and find optimal strategies for Player~1.
The resulting three one-shot games are:
\[s_L:\left(\begin{matrix} 1 & \tfrac{14c}{11} & \tfrac{15c}{11}\\ 0 & 2+\tfrac{14c}{11} & \tfrac{15c}{11} \\ 0 & \tfrac{14c}{11} & 3+\tfrac{15c}{11} \end{matrix}\right) \qquad 
s_M:\left(\begin{matrix} 1+c & \tfrac{3c}{11} & \tfrac{15c}{11}\\ c & 2+\tfrac{3c}{11} & \tfrac{15c}{11} \\ c & \tfrac{3c}{11} & 3+\tfrac{15c}{11} \end{matrix}\right) \qquad 
s_R:\left(\begin{matrix} 1+c & \tfrac{14c}{11} & \tfrac{4c}{11}\\ c & 2+\tfrac{14c}{11} & \tfrac{4c}{11} \\ c & \tfrac{14c}{11} & 3+\tfrac{4c}{11}  \end{matrix}\right) \]
In game $s_L$, solving the standard equation of indifference leads to a mixed strategy in which Player~1 plays his actions (from top to bottom) with probabilities $\tfrac{1}{121}(72c+66,33-41c,22-31c)$ (and similar solutions for the other states).
This mixed action is indeed a distribution for $c\leq\tfrac{22}{31}$ and for these values of $c$ we have found the value $v(c)=\tfrac{6}{11}+\tfrac{72c}{121}$ and the optimal strategies.

For $c>\tfrac{22}{31}$, action $R$ in state $s_L$ becomes dominated and Player~1 cannot keep Player~2 indifferent between it and the other actions.
We therefore assume Player~2 never plays action $R$ in state $s_L$.
As a result, Player~1 will never play his bottom action in $s_L$ and the new guess for a mixed action for Player~2 in state $s_L$ is $\tfrac{1}{3}(2,1,0)$.
In the rest of the states, action $R$ is not dominated so Player~2 continues to play mixed action $\tfrac{1}{11}(6,3,2)$ in them.

Repeating the same process with this strategy (which is not static  this time) leads to the next critical value of $c=\tfrac{121}{156}$.
Above it,  action $M$ in state $s_L$ (including, again, continuation payoffs and switching costs) is dominated by $L$ and so $s_L$ becomes absorbing.

% --------------------------------------------------------------------------

%\section{Proof of Theorems}
%\ZSGameValue*
%\begin{proof}[\textbf{Proof of Proposition \ref{proposition: The value of the zerosum game}}.]
%\hfill \end{proof}
%\bigskip
\end{document}